\DeclarePairedDelimiter\floor{\lfloor}{\rfloor}
\def\today{\ifcase\month\or
  January\or February\or March\or April\or May\or June\or
  July\or August\or September\or October\or November\or December\fi
  \space\number\day, \number\year}
\providecommand{\ran}[2]{\llbracket {#1}, {#2} \rrbracket}
\newcommand{\w}[1]{\widehat{#1}} 
\newcommand{\bd}[1]{\boldsymbol{#1}}
\newcommand{\obs}{\mathrm{Obs}(w)_{s, \ld, \xi}}
\DeclareMathOperator{\tc}{tc}
\DeclareMathOperator{\pw}{pw}
\newcommand{\by}{\overline{y}}
\newcommand{\dom}{\text{dom}}
\newcommand{\weta}{\widehat{\eta}}
\newcommand{\ueta}{\breve{\eta}}
\newcommand{\wal}{\widehat{\alpha}}
\newcommand{\ual}{\breve{\alpha}}
\newcommand{\mc}{\mathcal}
\newcommand{\A}{\mc{A}}
\newcommand{\B}{\mc{B}}
\newcommand{\D}{\mc{D}}
\newcommand{\F}{\mc{F}}
\newcommand{\G}{\mc{G}}
\def\H{\mathcal H}
\newcommand{\J}{\mc{J}}
\newcommand{\M}{\mc{M}}
\newcommand{\X}{\mc{X}}
\newcommand{\R}{\mathbb{R}}
\newcommand{\N}{\mathbb{N}}
\newcommand{\bw}{\boldsymbol{w}}
\newcommand{\bA}{\boldsymbol{A}}
\newcommand{\bX}{\boldsymbol{X}}
\newcommand{\kB}{\mathfrak{B}}
\newcommand{\kD}{\mathfrak{D}}
\newcommand{\tr}{\text{tr}}
\newcommand{\ld}{\lambda}
\newcommand{\al}{\alpha}
\newcommand{\veps}{\varepsilon}
\newcommand{\vp}{\varphi}
\newcommand{\wxi}{\tilde{\xi}_{s, \ld}}
\providecommand{\norm}[1]{\lVert#1\rVert}
\providecommand{\inn}[1]{\langle#1\rangle}
\newtheorem{theorem}{Theorem}[section]
\newtheorem{lemma}[theorem]{Lemma}
\newtheorem{proposition}[theorem]{Proposition}
\theoremstyle{definition}
\newtheorem{definition}[theorem]{Definition}
\theoremstyle{remark}
\newtheorem{remark}[theorem]{Remark}
\newtheorem{assum}{Hypothesis} 
\newenvironment{assump}[2][]
  {\begin{assum}[#1]}
  {\end{assum}}
\newcommand{\intav}[1]{\mathchoice {\mathop{\vrule width 6pt height 3 pt depth  -2.5pt
\kern -8pt \intop}\nolimits_{\kern -6pt#1}} {\mathop{\vrule width
5pt height 3  pt depth -2.6pt \kern -6pt \intop}\nolimits_{#1}}
{\mathop{\vrule width 5pt height 3 pt depth -2.6pt \kern -6pt
\intop}\nolimits_{#1}} {\mathop{\vrule width 5pt height 3 pt depth
-2.6pt \kern -6pt \intop}\nolimits_{#1}}}
\begin{document}

\title[Carleman estimates for the KdV with piecewise constant main coefficient]{Carleman estimates for the Korteweg-de Vries equation with piecewise constant main coefficient}
\author[]{Cristóbal Loyola}
\date{\today}
\subjclass[2020]{35Q53, 35R05, 35R30, 93B05, 93B07 93C20}
\keywords{Korteweg-de Vries equation, Carleman estimates, internal control, lipschitz stability, discontinuous coefficients}
\address{Sorbonne Université, Université Paris Cité, CNRS, Laboratoire Jacques-Louis Lions, LJLL, F-75005 Paris, France}
\email{cristobal.loyola@sorbonne-universite.fr}

\allowdisplaybreaks
\numberwithin{equation}{section}

\begin{abstract}
    In this article, we investigate observability-related properties of the Korteweg-de Vries equation with a discontinuous main coefficient, coupled by suitable interface conditions. The main result is a novel two-parameter Carleman estimate for the linear equation with internal observation, assuming a monotonicity condition on the main coefficient. As a primary application, we establish the local exact controllability to the trajectories by employing a duality argument for the linear case and a local inversion theorem for the nonlinear equation. Secondly, we establish the Lipschitz-stability of the inverse problem of retrieving an unknown potential using the Bukhge{\u\i}m-Klibanov method, when some further assumptions on the interface are made. We conclude with some remarks on the boundary observability.
\end{abstract}

\maketitle
\setcounter{tocdepth}{1}
\tableofcontents

\section{Introduction}

Let $T>0$, $L>0$ and $p: [0, L]\to\R^+$ be a piecewise constant function where $p(x)=p_{k}>0$ on each $[a_{k}, a_{k+1})$ with $\Gamma=\{a_1<a_2<\cdots<a_{N-1}\}$ and $a_0=0$, $a_N=L$. Given some initial data $y_0$, we study the following Korteweg-de Vries equation with piecewise constant main coefficient
\begin{align}\label{eq:NLKdV}
\left\{
\begin{array}{rll}
y_t+p(x)y_{xxx}+y_x+yy_x=0, &\ (t, x)\in (0, T)\times (0, L),\\
y(t, 0)=y(t, L)=y_x(t, L)=0, &\ t\in (0, T),\\
y(0, x)=y_0(x), &\ x\in (0, L),
\end{array}
\right.
\end{align}
coupled by the \emph{transmission conditions}
\begin{align}\label{eq:tc}\tag{TC}
    \left\{\begin{array}{rrlll}
    y(t, a_k^-)&=&y(t, a_k^+), & ~t\in (0, T),~k\in\ran{1}{N-1},\\
    \sqrt{p_{k-1}}y_x(t, a_k^-)&=&\sqrt{p_k}y_x(t, a_k^+), & ~t\in (0, T),~k\in\ran{1}{N-1},\\
    p_{k-1}y_{xx}(t, a_k^-)&=&p_ky_{xx}(t, a_k^+), & ~t\in (0, T),~k\in\ran{1}{N-1},
    \end{array}\right.
\end{align}
where $\ran{0}{N-1}:=\{0, \ldots, N-1\}$. Despite the discontinuity of the coefficient $p$, the transmission conditions allow us to consider this model as a whole in $(0, T)\times (0, L)$, since they act as boundary conditions on each $(a_k, a_{k+1})$, $k\in\ran{0}{N-1}$.

The Korteweg-de Vries equation (KdV) is a well-known dispersive equation, introduced by Korteweg-de Vries \cite{KdV1895} to model the propagation of water waves in a shallow channel, namely, water waves with small amplitude and large wavelength compared to the undisturbed depth profile. In this context, this kind of water waves in a channel with a sudden jump in the depth profile have been modeled by a KdV equation with a discontinuous main coefficient \cite{PV92}. Thus, in this model the function $p=p(x)$ can be interpreted as the undisturbed depth profile with jumps of the channel, whereas $p(x)+y(t, x)$ represents the wave surface at time $t$ at position $x$. On the mathematical side, to solve the KdV equation on the half-line, Deconinck, Sheils and Smith \cite{DSS16} has proposed (several) interface conditions involving the first three derivatives (in space) of the solution at the interface. The specific interface conditions of system \eqref{eq:NLKdV}-\eqref{eq:tc} were proposed by Crépeau \cite{Crepeau16}, where the exact boundary controllability of such a model is studied. In this work, we continue the study of this equation by obtaining a new Carleman estimate under a monotonicity hypothesis on the main coefficient, allowing us to deduce some results on the controllability and the recovery of some parameter for this equation. 

\subsection{Main results} Let us define $I_{k}:=(a_k, a_{k+1})$, $k\in\ran{0}{N-1}$. In what follows $\omega$ will be always a non-empty open subset of $(0, L)$. Due to the discontinuity of the main coefficient, it is natural to impose some restriction on where the observation zone should be located. In this direction we will introduce the following assumption on the pair $(\omega, p)$:

\begin{assump}{$\mathfrak{M}$}\label{assumM} there exists $j\in \ran{0}{N-1}$ such that the observation zone $\omega$ is such that $\overline{\omega}\subset I_j$, henceforth denoted as $\omega\Subset I_j$, and the following monotonicity property holds depending on the value of $j$:
    \begin{enumerate}
        \item if $j\not\in \{0, N-1\}$ then
        \begin{align*}
        \begin{cases}
            p(a_k^-)>p(a_k^+),\ k\in\ran{1}{j},\\
            p(a_k^-)<p(a_k^+),\ k\in\ran{j+1}{N-1};
        \end{cases}
        \end{align*}
        \item if $j=0$ then
        \begin{align*}
            p(a_k^-)<p(a_k^+),\ k\in\ran{1}{N-1};
        \end{align*}
        \item if $j=N-1$ then
        \begin{align*}
            p(a_k^-)>p(a_k^+),\ k\in\ran{1}{N-1}.
        \end{align*}
    \end{enumerate}
\end{assump}
Let $s\geq 0$ and let us introduce the space
\begin{align}\label{S1:def:H3gamma}
    H_{\pw}^s(0, L):=\{u\in L^2(0, L)\ |\ u_{|_{I_{k}}}\in H^s(I_k),\ k\in\ran{0}{N-1}\}.
\end{align}
Via isomorphism, $H_{\pw}^s(0, L)$ can be seen as the direct sum of the Sobolev spaces $H^s(I_k)$ for $k\in\ran{0}{N-1}$. Thus, it has a Hilbert space structure equipped the inner product
\begin{align*}
    \inn{u, v}_{H_{\pw}^s(0, L)}=\sum_{k=0}^{N-1} \inn{u_{|_{I_{k}}}, v_{|_{I_{k}}}}_{H^s(I_k)}.
\end{align*}

\subsubsection{Carleman estimate}  Let $Q:=(0, T)\times (0, L)$. For some $\rho_0$, $\rho_1>0$ given, let us assume that
\begin{align*}
    \rho_0\leq \min_{x\in [0, L]}p(x)\ \text{ and }\ \max_{x\in [0, L]}p(x)\leq \rho_1.
\end{align*}
Let $b$, $d\in L^\infty(Q)$ and let $\mc{L}: \mc{V}\to L^2(Q)$ be the differential operator given by
\begin{gather}\label{S1:def:opL}
\mc{L}=\partial_t+p(x)\partial_{x}^3+b(t, x)\partial_x+d(t, x),
\end{gather}
where $\mc{V}$ is the space of functions $u\in L^2(0, T; H_{\pw}^3(0, L)\cap H_0^1(0, L))$ such that $\mc{L}u\in L^2(Q)$ and $u$ satisfies the transmission conditions
\begin{align}\label{S1:eq:tc-carleman}
    \left\{\begin{array}{rrlll}
    u(t, a_k^-)&=&u(t, a_k^+), & ~t\in (0, T),~k\in\ran{1}{N-1},\\
    \sqrt{p_{k-1}}u_x(t, a_k^-)&=&\sqrt{p_k}u_x(t, a_k^+), & ~t\in (0, T),~k\in\ran{1}{N-1},\\
    p_{k-1}u_{xx}(t, a_k^-)&=&p_ku_{xx}(t, a_k^+), & ~t\in (0, T),~k\in\ran{1}{N-1}.
    \end{array}\right.
\end{align}

Consider $\omega_0$ a non-empty open subset of $\omega$ such that $\omega_0\Subset \omega$. Fix $\kappa\in (1, 2)$ and let $\beta$ be the weight function constructed by \cref{S3:lem:weightObs} below. For $\ld>0$, we introduce the Carleman weights
\begin{align}\label{S1:intro:weights}
    0<\eta(t, x)=\dfrac{e^{\kappa\ld\norm{\beta}_{\infty}}-e^{\ld\beta(x)}}{t(T-t)}\ \text{   and   }\   \xi(t, x)=\dfrac{e^{\ld\beta(x)}}{t(T-t)},
\end{align}
for $(t, x)\in Q$, The main result is the following two-parameter Carleman estimate.

\begin{theorem}\label{thm:carleman_omega}
Let $(\omega, p)$ satisfy Hypothesis \ref{assumM} and let $\omega_0\Subset \omega$ be non-empty and open. There exist $s_0>0$, $\ld_0>0$ and a constant $C>0$ depending on $\omega$, $\Gamma$, $L$, $T$, $p$, $\norm{\beta}_{C^3([0, L]\setminus\Gamma)}$, $s_0$ and $\ld_0$ such that for all $u\in \mc{V}$ we have
\begin{multline}\label{thm:ineq:carleman_omega}
C\iint_Q e^{-2s\eta}\big(s^5\ld^6\xi^5|u|^2+s^3\ld^4\xi^3|u_x|^2+s\ld^2\xi|u_{xx}|^2\big)dxdt\\ \leq \norm{e^{-s\eta}\mc{L}u}_{L^2(Q)}^2+\iint_{(0, T)\times\omega}e^{-2s\eta} \big(s^5\ld^6 \xi^5|u|^2+s^3\ld^4\xi^3|u_x|^2+s\ld^2 \xi|u_{xx}|^2\big)dxdt
\end{multline}
for any $s\geq s_0$ and $\ld\geq \ld_0.$
\end{theorem}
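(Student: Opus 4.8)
\subsection*{Proof proposal}

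The plan is to follow the classical conjugation-and-splitting scheme for Carleman estimates, adapted to track the contributions coming from the interface points $\Gamma$. First I would set $w:=e^{-s\eta}u$ and compute the conjugated operator $Pw:=e^{-s\eta}\mc{L}(e^{s\eta}w)$. Since the weight $\beta$ furnished by \cref{S3:lem:weightObs} is continuous across $\Gamma$ (only its derivatives jump), the functions $\eta$ and $\xi$ are continuous in $x$, so $w$ satisfies the very same transmission conditions \eqref{S1:eq:tc-carleman} as $u$, together with $w(t,0)=w(t,L)=w_x(t,L)=0$. Expanding $e^{-s\eta}\partial_x^3(e^{s\eta}w)$ and $e^{-s\eta}\partial_t(e^{s\eta}w)$ produces a polynomial in $s$, $\ld$ and the derivatives of $\beta$; I would regroup the top-order terms into a formally skew-symmetric part $P_a$ (collecting $\partial_t w$, $p\,\partial_x^3 w$ and the $3ps^2\eta_x^2\partial_x w$-type term) and a formally symmetric part $P_s$ (collecting the $s^3$ zeroth-order term and the $\partial_x^2$-contributions), relegating the coefficients $b,d$ and the genuinely lower-order pieces into a remainder $R$ to be absorbed later.

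Next I would write $\norm{e^{-s\eta}\mc{L}u}_{L^2(Q)}^2=\norm{P_aw+P_sw+Rw}_{L^2(Q)}^2\geq 2\,\Inn{P_aw,P_sw}_{L^2(Q)}+(\text{terms controlled by }R)$ and expand the inner product by integrating by parts in both $t$ and $x$, working on each slab $(0,T)\times I_k$ separately. The spatial integrations generate three kinds of contributions: (i) distributed volume integrals, which, using that $\beta_x$ is bounded away from zero outside $\omega_0$ together with the relative sizes of the powers of $s$ and $\ld$, reconstruct the positive left-hand side carrying $s^5\ld^6\xi^5|w|^2+s^3\ld^4\xi^3|w_x|^2+s\ld^2\xi|w_{xx}|^2$; (ii) boundary terms at $x=0$ and $x=L$, handled by the boundary conditions — the two conditions at $x=L$ annihilate all traces there, while at $x=0$ the sign of $\beta_x(0)$ dictated by \cref{S3:lem:weightObs} makes the surviving boundary traces carry the favourable sign and hence be droppable; and (iii) interface terms at each $a_k$, summed over the two adjacent subintervals.

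The main obstacle is precisely the interface contributions in (iii). Because $p$ jumps at each $a_k$, the boundary terms coming from $I_{k-1}$ and $I_k$ do not cancel automatically; instead, after substituting the transmission conditions \eqref{S1:eq:tc-carleman} (which relate the one-sided limits of $u$, $\sqrt{p}\,u_x$ and $p\,u_{xx}$) one obtains at each $a_k$ a quadratic form in the traces of $w$, $w_x$, $w_{xx}$ whose coefficients involve the jump of $p$ and the one-sided values of $\beta_x$. The key point is that the weight from \cref{S3:lem:weightObs} is built so that $\beta_x(a_k^\pm)$ has the sign dictated by the position of $\omega$ relative to $a_k$; combined with the monotonicity of $p$ imposed in Hypothesis \ref{assumM} (jumps down for $k\le j$, up for $k>j$, with $\omega\Subset I_j$), each interface quadratic form turns out to have a definite, favourable sign for $s,\ld$ large, so that the whole sum over $\Gamma$ may be discarded from the estimate. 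Verifying this sign-definiteness — that Hypothesis \ref{assumM} and the construction of $\beta$ are exactly compatible — is the crux of the argument and the only place where the monotonicity assumption is genuinely used.

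Finally, I would fix $\ld$ large enough and then $s$ large enough to absorb all lower-order volume terms (those carrying lower powers of $s$ or $\ld$, including the ones generated by $R$, $b\partial_x w$ and $dw$) into the leading positive terms of (i); the sole region where the volume terms cannot be absorbed is where $\beta_x$ vanishes, namely $\omega_0\Subset\omega$, and those are transferred to the right-hand side via a cutoff supported in $\omega$ to form the observation integral over $(0,T)\times\omega$. Undoing the conjugation through $u=e^{s\eta}w$ — which replaces $w,w_x,w_{xx}$ by $u,u_x,u_{xx}$ up to terms that are again lower order and absorbable for large $s$ — yields \eqref{thm:ineq:carleman_omega}, with the constant $C$ depending on the stated quantities through the bounds on $\beta$, $p$, and the thresholds $s_0,\ld_0$.
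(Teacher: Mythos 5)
Your proposal follows essentially the same route as the paper: conjugation $w=e^{-s\eta}u$, a Fursikov--Imanuvilov splitting of the conjugated operator, expansion of the cross product slab by slab, and positivity of the interface quadratic forms obtained by combining Hypothesis \ref{assumM} with the transmission conditions built into the weight of \cref{S3:lem:weightObs}; you also correctly single out the interface analysis as the only place where monotonicity is used. However, there is a genuine gap in your treatment of the distributed terms. With your splitting, $P_aw=w_t+pw_{xxx}+3ps^2\eta_x^2w_x$ and $P_sw=ps^3\eta_x^3w+3ps\eta_xw_{xx}+3s(p\eta_x)_xw_x$, the leading distributed term in $|w_x|^2$ cancels \emph{exactly}: the three contributions of order $s^3\ld^4\iint_Q p^2\beta_x^4\xi^3|w_x|^2\,dxdt$, coming from $\inn{3ps^2\eta_x^2w_x,\,3ps\eta_xw_{xx}}$, $\inn{3ps^2\eta_x^2w_x,\,3s(p\eta_x)_xw_x}$ and $\inn{pw_{xxx},\,ps^3\eta_x^3w}$, carry the coefficients $\tfrac{27}{2}$, $-9$ and $-\tfrac{9}{2}$, whose sum is $0$. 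Hence the double product does not ``reconstruct'' the middle term $s^3\ld^4\xi^3|w_x|^2$ of the claimed left-hand side, and the sign-indefinite lower-order $|w_x|^2$ terms (of size $s^3\ld^3+s\ld^4$) then have nothing to be absorbed into: the argument as written fails. The paper's proof contains a device designed precisely to fix this, which your proposal misses: a fraction $m\in(0,5/2)$ of the zeroth-order term $3ps^2\eta_x\eta_{xx}w$ of the conjugated operator is placed in the first factor, $\mc{L}_1w=w_t+3ps^2\eta_x^2w_x+pw_{xxx}+3pms^2\eta_x\eta_{xx}w$ (the complementary part going into the remainder $\mc{R}$), which generates the additional distributed contribution $+9ms^3\ld^4\iint_Q p^2\beta_x^4\xi^3|w_x|^2\,dxdt$ while only lowering the $s^5\ld^6$ coefficient from $\tfrac{15}{2}$ to $\tfrac{15}{2}-3m>0$. (Alternatively one could try to recover the $|w_x|^2$ term a posteriori from the $|w|^2$ and $|w_{xx}|^2$ terms by a further integration by parts, but this produces new interface traces involving $w(a)[w_x]_a$ that must in turn be dominated by the interface norm; either way an explicit extra step is required, and your proposal contains none.)

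A second, more localized error concerns the boundary terms: you claim that ``the two conditions at $x=L$ annihilate all traces there.'' The space $\mc{V}$ in the statement imposes only the Dirichlet conditions $u(t,0)=u(t,L)=0$; there is no Neumann condition at either endpoint (this is what makes the estimate applicable to the adjoint system \eqref{eq:kdvadj}, whose Neumann condition sits at $x=0$). Consequently, traces proportional to $|w_x(t,L)|^2$ and $|w_{xx}(t,L)|^2$ survive at $x=L$ --- and even if one did assume $u_x(t,L)=0$, the $|w_{xx}(t,L)|^2$ trace would still survive. The correct mechanism, and the one the paper uses, is the same one you invoke at $x=0$: the weight of \cref{S3:lem:weightObs} satisfies $\beta_x(0)>0$ and $\beta_x(L)<0$, both bounded away from zero, so the surviving sign-definite traces at both endpoints are favourable and can be dropped, while the sign-indefinite ones are absorbed into them via Young's inequality for $s$ large. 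Note also that proving the estimate only under an extra Neumann condition at $x=L$ would yield a strictly weaker theorem, insufficient for the duality argument of Section 4.
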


The above estimate is derived by following Fursikov and Imanuvilov \cite{FI96}. We emphasize that a Carleman estimate is of interest in itself due to the many applications they have found. We now give two applications and in \cref{S6} we briefly discuss the case of boundary observability.

\subsubsection{Local controllability to the trajectories} Let us consider the controlled KdV equation
\begin{align}\label{eq:NLKdV-control}
\left\{
\begin{array}{rll}
y_t+p(x)y_{xxx}+y_x+yy_x=\mathbbm{1}_\omega v, &\ (t, x)\in (0, T)\times (0, L),\\
y(t, 0)=y(t, L)=y_x(t, L)=0, &\ t\in (0, T),\\
y(0, x)=y_0(x), &\ x\in (0, L),
\end{array}
\right.
\end{align}
coupled by the transmission conditions \eqref{eq:tc}, where $y_0$ is the initial condition, and $v$ is a control localized in some non-empty open set $\omega\subset (0, L)$. We are interested in the \emph{exact controllability to the trajectories} for the KdV equation \eqref{eq:NLKdV-control}. More precisely, we wonder if, given $T>0$ and a solution $\by$ of the uncontrolled KdV equation
\begin{align}\label{eq:NLKdV-traj}
\left\{
\begin{array}{rll}
\by_t+p(x)\by_{xxx}+\by_x+\by\by_x = 0, & ~(t, x)\in (0, T)\times (0, L),\\
\by(t, 0)=\by(t, L)=\by_{x}(t, L)=0, & ~t\in (0, T),\\
\by(0, x) =  \by_0(x), & ~x\in (0, L),
\end{array}
\right.
\end{align}
coupled by the corresponding transmission conditions \eqref{eq:tc}, there exists a control $v=v(t, x)$ such that the corresponding solution $y=y(t, x)$ satisfies $y(T, \cdot)=\by(T, \cdot)$ on $(0, L)$. In \cref{S2} we discuss the well-posedness of systems \eqref{eq:NLKdV-control} and \eqref{eq:NLKdV-traj}, both coupled by \eqref{eq:tc}. The controllability result is the following.

\begin{theorem}\label{thm:NLKdV-control}
    Let $T>0$ and let $(\omega, p)$ satisfy Hypothesis \ref{assumM}. If $\by\in C([0, T], L^2(0, L))\cap L^2(0, T; H^1(0, L))$ is the solution of \eqref{eq:NLKdV-traj}, then there exists $\delta>0$ such that for any $y_0\in L^2(0, L)$ satisfying $\norm{y_0-\by_0}_{L^2(0, L)}\leq \delta$, we can find a control $v\in L^2((0, T)\times \omega)$ such that the corresponding solution $y$ to \eqref{eq:NLKdV-control} satisfies
    \begin{align*}
        y(T, \cdot)=\by(T, \cdot)\ \text{ in }\ (0, L).
    \end{align*}
\end{theorem}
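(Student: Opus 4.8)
The plan is to prove exact controllability to the trajectories by a standard two-step scheme: first establish the null-controllability of the linearized system around the trajectory $\by$ using the Carleman estimate of \cref{thm:carleman_omega}, and then pass to the nonlinear system via a local inversion (inverse function) theorem. I would begin by writing $z:=y-\by$, so that $z$ solves a linear KdV-type equation with a source term coming from the nonlinearity. Since $yy_x-\by\by_x = \by z_x + z\by_x + zz_x$, the difference $z$ satisfies
\begin{align*}
z_t+p(x)z_{xxx}+z_x+(\by z)_x+zz_x=\mathbbm{1}_\omega v,
\end{align*}
coupled by the homogeneous transmission conditions \eqref{eq:tc}, with $z(0,\cdot)=y_0-\by_0$ and the goal $z(T,\cdot)=0$. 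Dropping the quadratic term $zz_x$ gives the linearized control problem, whose operator is exactly of the form $\mc{L}$ in \eqref{S1:def:opL} with $b=1+\by$ (or, after integrating by parts, $b=\by$ and $d=\by_x$), so the hypotheses $b,d\in L^\infty(Q)$ are met once we know $\by\in L^\infty$; the regularity $\by\in C([0,T];L^2)\cap L^2(0,T;H^1)$ should be upgraded, using the well-posedness theory of \cref{S2}, to guarantee the needed boundedness of the coefficients.

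The core linear step is to prove null-controllability of the linearized equation. The route is duality: by the Hilbert Uniqueness Method, null-controllability with controls in $L^2((0,T)\times\omega)$ is equivalent to an observability inequality for the adjoint system. I would write down the adjoint (backward) operator $\mc{L}^*=-\partial_t - p(x)\partial_x^3 + \ldots$, check that its transmission conditions are precisely the adjoint of \eqref{eq:tc} (here the factors $\sqrt{p_k}$ and $p_k$ in the transmission conditions are designed so that the boundary/interface terms cancel in the integration by parts, making $\mc{L}^*$ self-consistent on $\mc{V}$), and then apply \cref{thm:carleman_omega} to $\varphi$ solving the adjoint equation. Absorbing the weights, integrating the Carleman inequality over a sub-interval $(T/4,3T/4)$ in time where the weight $\eta$ is bounded, and combining with an energy estimate for the adjoint equation yields the observability inequality
\begin{align*}
\norm{\varphi(0,\cdot)}_{L^2(0,L)}^2\leq C\iint_{(0,T)\times\omega}|\varphi|^2\,dx\,dt.
\end{align*}
By the standard duality/penalization argument this produces, for each $z(0,\cdot)\in L^2(0,L)$, a control $v\in L^2((0,T)\times\omega)$ steering the linear system to rest at time $T$, together with a linear continuous bound $\norm{v}_{L^2}+\norm{z}_{X}\leq C\norm{z(0,\cdot)}_{L^2(0,L)}$ in a suitable solution space $X$.

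To close the nonlinear problem I would set up a map $\mathcal{N}$ between appropriate Hilbert spaces: given the control-state pair, $\mathcal{N}$ sends $(y_0,v)$ (or rather the data and the trajectory) to the final state plus the equation residual, so that finding a control is equivalent to solving $\mathcal{N}(\cdot)=0$ near the trajectory. The previous step shows that the linearization $D\mathcal{N}$ at the trajectory is surjective with a bounded right inverse; by the surjective-mapping / local inversion theorem of Lyusternik type (or Liusternik's theorem as used by Fursikov--Imanuvilov), for $y_0$ close enough to $\by_0$ in $L^2(0,L)$ there is a solution, giving the control $v$ and the exact controllability to the trajectory. The main obstacle I anticipate is twofold: first, carefully constructing the functional-analytic framework (the weighted spaces $X$ and the target space) so that the nonlinear term $zz_x$ maps continuously and the local inversion theorem applies — this requires matching the regularity obtained from the Carleman-based observability with what the nonlinearity demands, typically via a smoothing/parabolic-type estimate for KdV; and second, handling the transmission conditions \eqref{eq:tc} throughout, in particular verifying that the integrations by parts producing the adjoint and the energy estimates do not generate uncontrolled interface terms, which is exactly where the specific scaling $\sqrt{p_k}$, $p_k$ in \eqref{eq:tc} must be used.
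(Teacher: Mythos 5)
Your outline follows the same skeleton as the paper: set $z=y-\by$, prove null controllability of the linearization through a Carleman-based observability inequality, and conclude with the Fursikov--Imanuvilov local inversion theorem (\cref{S4:thm:inversemap}). But two steps you treat as routine are genuine gaps. The first is the passage from \cref{thm:carleman_omega} to your observability inequality $\norm{\varphi(0,\cdot)}_{L^2(0,L)}^2\leq C\iint_{(0,T)\times\omega}|\varphi|^2\,dxdt$: the local observation term in \cref{thm:carleman_omega} contains $|u_x|^2$ and $|u_{xx}|^2$ over $\omega$, and these must be eliminated before duality can produce controls in $L^2((0,T)\times\omega)$ rather than distributional ones. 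The first-order term can indeed be removed by the usual cutoff-and-integration-by-parts trick (the resulting terms are absorbed by the global left-hand side), but the second-order term cannot: integrating $\chi\xi e^{-2s\eta}|\varphi_{xx}|^2$ by parts produces $\varphi_{xxx}$, which the left-hand side of the Carleman estimate does not control, and using the equation to replace $\varphi_{xxx}$ introduces $\varphi_t$, equally uncontrolled. This is exactly why the paper proves the one-parameter estimate of \cref{S4:prop:carleman-I} via a decomposition $\rho_0\varphi=u+z$, interpolation of $H^1(\omega)$ and $H^2(\omega)$ between $L^2(\omega)$ and $H^3(\omega)$, and a bootstrap that absorbs a local $H^3(\omega)$-norm using the regularity estimates of \cref{prop:wp:adj-linearized}; it is also why the proof needs $\by\in\X_{\Gamma,T}^3(0,L)$ (via \cref{prop:NLwp-Xs}) and not merely $\by\in L^\infty(Q)$.

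Second, the unweighted, source-free observability inequality you state is not the right input for the inversion step, and you leave precisely this mismatch unresolved (you flag it yourself as ``the main obstacle''). For $\F(z,v)=(\mc{L}z+zz_x-\mathbbm{1}_\omega v,\,z(0))$ to be $C^1$ with surjective differential between suitable Banach spaces, one needs null controllability of the linearized system \emph{with a source term} $h$ lying in a weighted space whose weight blows up as $t\to T$, together with the state $z$ in a weighted space that both forces $z(T,\cdot)=0$ and makes $(z^1z^2)_x$ land back in the source space. This requires a different inequality from the one you propose: the paper's \cref{S4:lem:carleman-II} carries the source $g$, uses the modified weight $\al$ that stays bounded near $t=0$ (so that $\norm{\varphi(0)}_{L^2(0,L)}^2$ can appear on the left), and has the mismatched observation weight $e^{-6s\ual+2s\wal}\tau^7$; the weighted linear controllability (\cref{S4:prop:lincontrol}) is then obtained by a Lax--Milgram construction on the completion of smooth test functions under the bilinear form built from that inequality, not by plain HUM. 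Your plan correctly identifies where the difficulty lies, but the observability inequality you propose to prove would not suffice to carry it out.
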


The strategy consists in first consider the system satisfied by $z=y-\by$, which is given by
\begin{align}\label{eq:NLKdV-control-dif}
\left\{
\begin{array}{rll}
z_t+p(x)z_{xxx}+z_x+(\by z)_x+zz_x = \mathbbm{1}_\omega v, &\ (t, x)\in (0, T)\times (0, L),\\
z(t, 0)=z(t)=z_x(t, L)=0, &\ t\in (0, T),\\
z(0, x) =  z_0(x), &\ x\in (0, L),
\end{array}
\right.
\end{align}
coupled by the corresponding transmission conditions \eqref{eq:tc}. Then, establishing the exact control to the trajectories reduces to establish the null controllability of the system \eqref{eq:NLKdV-control-dif}. This is done by studying the null controllability of the linearization of \eqref{eq:NLKdV-control-dif} and then employing a fixed point argument to treat the nonlinear system. The null controllability of the linearized system follows by a duality argument and a suitable observability estimate for the adjoint system. This observability estimate is derived from the Carleman estimate given in \cref{thm:carleman_omega}.

\subsubsection{Retrieving a potential term} Consider the nonlinear KdV equation with potential $\mu=\mu(x)$,
\begin{align}\label{S5:eq:KdVpotential}
\left\{
\begin{array}{rll}
    y_t+p(x)y_{xxx}+\mu(x) y_x+yy_x=0, & ~(t, x)\in (0, T)\times (0, L),\\
    y(t, 0)=h_1(t),\ y(t, L)=h_2(t),\ y_{x}(t, L)=h_3(t), & ~t\in (0, T),\\
    y(0, x)=y_0(x), & ~x\in (0, L),
\end{array}
\right.
\end{align}
coupled by the transmission conditions \eqref{eq:tc}, initial condition $y_0$ and boundary data $\vec{h}=(h_1, h_2, h_3)$. We denote its solution by $y=y[\mu]$. We make the following assumptions on the interface $\Gamma$.

\begin{assump}{$\mathfrak{I}$}\label{assumG}
    The interface $\Gamma$ and the coefficient $p$ satisfy:
    \begin{itemize}
        \item The middle point of the domain is not an interface point: $a_k\neq L/2$ for each $k\in\ran{0}{N-1}$.
        \item The interface is symmetric, in the sense that $a_k+a_{N-k}=L$ for all $k\in\ran{0}{N}$.
    \end{itemize}
\end{assump}

For $m>0$ given, let us introduce the set of admissible potentials
\begin{align*}
    \mathfrak{P}_{\leq m}^{adm}=\left\{\mu\in H_{\pw}^3\cap H_0^1(0, L)\ \Big|\ \begin{array}{c} \mu'(0)=0,\ \mu(a_k)=0 \text{ and }\ [\mu']_{a_k}=[\sqrt{p}\mu'']_{a_k}=0,\ k\in\ran{1}{N-1} \\ \mu(x)=\mu(L-x),\forall x\in [0, L] \text{ and } \norm{\mu}_{H_{\pw}^3(0, L)}\leq m\end{array}\right\}.
\end{align*}
Following the Bukhge{\u\i}m-Klibanov method, we can employ a slight variant of the Carleman estimate given in \cref{thm:carleman_omega} to establish the Lipschitz continuity of the inverse problem consisting on retrieving the potential term on the equation \eqref{S5:eq:KdVpotential}. The result is the following.

\begin{theorem}\label{S1:thm:IPpotential}
Let $\omega\subset (0, L)$ be a nonempty open set containing $L/2$. Assume that $\Gamma$ satisfy Hypothesis \ref{assumG}. Let $p$ be symmetric with respect to $L/2$, that is, $p(x)=p(L-x)$ for all $x\in [0, L]$ and
\begin{align*}
        p(a_k^-)>p(a_k^+),\ k\in\ran{1}{\floor{N/2}},\ \text{ and }\
        p(a_k^-)<p(a_k^+),\ k\in\ran{\floor{N/2}}{N}.
\end{align*}
Let $m$, $r_0$ and $K$ be some given positive constants. For any $\mu\in \mathfrak{P}_{\leq m}^{adm}$, let $(y_0, \vec{h})\in \mc{Z}_{6, T}$ be compatible data (in the sense of \cref{def:compatible} below) with respect to $\mu$, with $y_0'(x)=y_0'(L-x)$ for $x\in [0, L]$. If $\min_{x\in [0, L]}|y_0'(x)|\geq r_0>0$, there exists a positive constant $C$ depending on $L$, $T$, $\Gamma$, $\omega$, $m$, $r_0$ and $K$ such that for any $\nu\in \mathfrak{P}_{\leq m}^{adm}$,
\begin{align*}
    \norm{\mu-\nu}_{L^2(0, L)}\leq C\norm{y-z}_{H^1(0, T; H_{\pw}^2(\omega))},
\end{align*}
where the solutions $y=y[\mu]$ and $z=z[\nu]$ of \eqref{S5:eq:KdVpotential}-\eqref{eq:tc} issued from $(y_0, \Vec{h})$ satisfy
\begin{align*}
    \max\{\norm{y}_{W^{1,\infty}(0, T;W^{1,\infty}(0, L))}, \norm{z}_{W^{1,\infty}(0, T;W^{1,\infty}(0, L))}\}\leq K.
\end{align*}
\end{theorem}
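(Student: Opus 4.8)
The plan is to prove the Lipschitz stability estimate via the Bukhge\u\i m--Klibanov method, which converts the inverse problem for the nonlinear equation into a Carleman estimate for a linearized problem differentiated in time. Set $u=y-z$ where $y=y[\mu]$, $z=z[\nu]$, and $f=\mu-\nu$. Subtracting the two equations \eqref{S5:eq:KdVpotential} and using $yy_x-zz_x=\tfrac12\partial_x(y+z)u$, I would write the equation satisfied by $u$ in the form $u_t+p(x)u_{xxx}+b(t,x)u_x+d(t,x)u = -f(x)z_x$, where $b=\mu+\tfrac12(y+z)$ and $d=\tfrac12(y+z)_x$ are in $L^\infty(Q)$ thanks to the $W^{1,\infty}$ bound $K$; note the source term factors as $-f(x)z_x$ with $f$ \emph{time-independent}. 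The operator on the left is exactly of the form $\mc{L}$ from \eqref{S1:def:opL}, and $u$ inherits the transmission conditions \eqref{S1:eq:tc-carleman}, so the Carleman machinery of \cref{thm:carleman_omega} applies.

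The heart of the Bukhge\u\i m--Klibanov argument is to differentiate in time. Introduce $w=u_t$, which satisfies $\mc{L}w = -f(x)z_{tx}$ with homogeneous transmission conditions, and crucially, evaluate at $t=0$: since $y(0,\cdot)=z(0,\cdot)=y_0$, we have $u(0,\cdot)=0$, so reading the equation for $u$ at $t=0$ gives
\begin{align*}
u_t(0,x) = -f(x)\,y_0'(x),
\end{align*}
because $z_x(0,\cdot)=y_0'$ and the remaining terms vanish with $u(0,\cdot)=0$. The lower bound $|y_0'(x)|\ge r_0>0$ then lets me recover $\norm{f}_{L^2(0,L)}^2 \lesssim \int_0^L |w(0,x)|^2\,dx$, i.e.\ controlling $f$ reduces to controlling $w$ at the initial time. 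The standard device is the energy-type identity: writing $\int_0^L|w(0,x)|^2 e^{-2s\eta(0,x)}$ is delicate because $\eta$ blows up at $t=0$, so instead I would integrate $\partial_t\bigl(e^{-2s\eta}|w|^2\bigr)$ over a suitable time interval and use the fact that the Carleman weight is maximal near $t=T/2$, combined with the symmetry hypotheses (Hypothesis \ref{assumG}, the symmetry of $p$ and of $y_0'$, and $\omega\ni L/2$) to guarantee the weight function $\beta$ can be chosen symmetric so that the monotonicity case of Hypothesis \ref{assumM} with $j$ the middle interval is satisfied simultaneously on both halves.

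Next I would apply the Carleman estimate \eqref{thm:ineq:carleman_omega} to both $u$ and $w=u_t$. The left-hand sides bound weighted $L^2$ norms of $u,u_x,u_{xx}$ and of $w,w_x,w_{xx}$; the right-hand sides contain $\norm{e^{-s\eta}\mc{L}u}^2 = \norm{e^{-s\eta}fz_x}^2 \lesssim \norm{f}^2$ (absorbed using the factorization and the $K$-bound) and analogously for $w$, plus the observation terms over $(0,T)\times\omega$. The observation terms are exactly what is measured on the right-hand side of the claimed inequality: since $\omega\ni L/2$ and $\omega\Subset I_j$ for the central interval, the local norms of $u,u_x,u_{xx}$ and their time derivatives on $\omega$ are controlled by $\norm{y-z}_{H^1(0,T;H_\Gamma^2(\omega))}$. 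Combining the initial-time lower bound for $f$ with the upper bounds from the two Carleman estimates, and absorbing the source-term contributions $\norm{f}^2$ into the left-hand side for $s,\lambda$ large, yields $\norm{\mu-\nu}_{L^2(0,L)}\le C\norm{y-z}_{H^1(0,T;H_\Gamma^2(\omega))}$.

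The main obstacle I anticipate is the passage from the space-time Carleman estimate to the pointwise-in-time bound at $t=0$, i.e.\ justifying rigorously the energy argument that transfers control of $\iint_Q$-weighted norms into control of $\int_0^L|w(0,x)|^2\,dx$ against the degeneracy of the weight at the temporal endpoints; this requires the additional regularity $y_0\in H_\Gamma^6\cap\H_\Gamma^3$ so that $w=u_t$ and even $u_{tt}$ are admissible in $\mc{V}$ and the integration by parts in $t$ is legitimate. A secondary technical point is constructing the symmetric weight $\beta$ compatible with the symmetric monotonicity of $p$ about $L/2$, which is precisely why Hypothesis \ref{assumG} and the symmetry of $y_0'$ are imposed; I would verify that the weight from \cref{S3:lem:weightObs} can be taken even in $x\mapsto L-x$ so that the same estimate holds after the reflection used in the Bukhge\u\i m--Klibanov symmetrization.
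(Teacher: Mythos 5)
Your proposal follows the same Bukhge{\u\i}m-Klibanov skeleton as the paper: set $u=y-z$, differentiate in time, use $u(0,\cdot)=0$ together with $|y_0'|\geq r_0$ to read $\norm{\mu-\nu}_{L^2}$ off $u_t(0,\cdot)$, apply the Carleman estimate both to $u_t$ and to $u$ (the latter is indeed needed, to absorb the coupling terms $b_t u_x+d_t u$ that you omitted when you wrote $\mc{L}w=-f z_{tx}$), and bound the local terms by $\norm{y-z}_{H^1(0,T;H_\Gamma^2(\omega))}$. However, the step you yourself flag as ``the main obstacle'' is where the argument genuinely breaks, and your proposed fix does not work. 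The estimate of \cref{thm:carleman_omega} lives on $(0,T)$ with weight $e^{-2s\eta}$, where $\eta\sim c(x)/(t(T-t))$ blows up as $t\to 0^+$, so the weight vanishes to infinite order at $t=0$ and the estimate carries \emph{no} information about that time. If you integrate $\partial_t\big(e^{-2s\eta}|w|^2\big)$ over $(0,t_1)$, the boundary contribution at $t=0$ is identically zero; you only control the weighted energy at $t=t_1$ (say $t_1=T/2$). That is useless here, because the identity $u_t(0,x)=-f(x)y_0'(x)$ holds only at $t=0$, and $u_t(T/2,\cdot)$ has no pointwise relation to $f$. Nor can you propagate backward from $T/2$ to $0$: by \cref{prop:semigroup} the generator is dissipative (it loses $|w_x(t,0)|^2$ at the boundary) and generates a semigroup, not a group, so no quantitative backward energy estimate is available.

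What actually closes this gap in the paper --- and this is the real, and only, role of all the symmetry hypotheses --- is the reflection trick: the problem is extended in time to $(-T,T)$ via $(t,x)\mapsto(-t,L-x)$ (the extensions $\widehat{g}$, $\check{g}$ of \cref{S5}), which is a symmetry of the equation because the odd-order spatial operator changes sign under $x\mapsto L-x$ and this is compensated by $t\mapsto -t$; this is exactly why $p$, $\Gamma$, the admissible potentials, and $y_0'$ must be symmetric and $\omega\ni L/2$. One then uses the modified Carleman estimate of \cref{S5:prop:carlemanomega}, whose weights have denominator $(t+T)(T-t)$ and hence are finite and positively bounded below at $t=0$. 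Only then does the energy identity become meaningful: $\int_{-T}^0\int_0^L \xi^2 w\,\mc{L}_1 w\,dx\,dt=\tfrac12\int_0^L\xi^2(0,x)|w(0,x)|^2dx+\J$, where the boundary term at $t=-T$ vanishes by weight degeneracy, the term at $t=0$ is bounded below by $r_0^2\int_0^L e^{-2s\eta(0,x)}\xi^2(0,x)|\sigma|^2dx$ with a non-degenerate weight, and $\J$ together with the source terms is absorbed by two applications of the Carleman estimate (to $\widehat{u_t}$ and to $\widehat{u}$). Your closing sentence gestures at ``the reflection used in the Bukhge{\u\i}m-Klibanov symmetrization,'' but only to make $\beta$ even in $x$; without the time extension and the weights that do not degenerate at $t=0$, your plan cannot produce any estimate at $t=0$, so the proof is incomplete at its central step.
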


\begin{remark}\label{rk:BKadmissible}
    The boundary and interface conditions defining $\mathfrak{P}_{\leq m}^{adm}$ ensure that any $(y_0, \vec{h})$ compatible with with respect to $\mu$ is automatically compatible with respect to any $\nu\in \mathfrak{P}_{\leq m}^{adm}$, so $z=z[\nu]$ is well-defined. This class is not sharp: the compatibility conditions constrain the difference $\mu-\nu$ rather than on $\mu$ and $\nu$ individually, so for a reference potential $\mu$, this set could be enlarged.
\end{remark}

\begin{remark}
    Along the proof, we apply the Carleman estimate for $(\omega_0, p)$ satisfying Hypothesis \ref{assumM}, where $\omega_0\Subset\omega$ is symmetric with respect to $L/2$. The symmetry and monotonicity hypothesis imposed on $p$ is then compatible with Hypothesis \ref{assumG}.
\end{remark}

\subsection{Some comments on the literature} The Korteweg-de Vries equation is one of the most celebrated nonlinear dispersive equations. The study of its controllability properties began with the early works of Russel and Zhang \cite{RZ93, RZ96}. Since then, extensive research has been conducted on its controllability properties. A good survey of results up to 2014 is provided by Cerpa \cite{Cer14}. Here, we briefly highlight some key issues.

Regarding our setting, where we aim to study the control properties of the equation, the uncontrolled system \eqref{eq:NLKdV}-\eqref{eq:tc}, which notably has a piecewise constant dispersion coefficient, was first proposed by Crépeau \cite{Crepeau16}. In that article, the boundary exact controllability with a single control acting on the Neumann boundary condition is established by a multiplier technique, under certain (smallness) conditions involving the coefficient $p$, the time $T>0$ and the length $L>0$. To the best of the author's knowledge, this is the only controllability result for such an equation. The exact boundary controllability of the KdV equation is a delicate issue, as was already noticed by Rosier \cite{Ros97} when he established (by a perturbative approach) that the exact controllability with right Neumann control holds if the length $L$ does not belong to a certain set of critical lengths. Later, using more refined nonlinear methods, Coron and Crépeau \cite{CC04} showed the (local) exact controllability of the nonlinear system holds even when the length is critical. Since then, extensive research has been conducted on various control problems surrounding this issue; a good survey of this phenomenon is given by Capistrano-Filho \cite{CF24}. As pointed out by Crépeau, obtaining exact control properties in the discontinuous setting, even by a perturbative approach, appears to be a challenging problem.

A less demanding property is the control to the trajectories. This was studied by Glass and Guerrero \cite{GG08} in the case of boundary controls. Observe that when $\by=0$, the control to the trajectories is known as null controllability. Thus, the control to the trajectories can be seen as a result in between the null controllability and exact controllability of the system. Moreover, it has been used as a stepping stone to obtain exact control properties by introducing additional controls acting on the system. For instance, Chapouly \cite{Cha09} employed this approach, leveraging control properties of the viscous Burgers equation.

Furthermore, the control properties of linear KdV equation in different settings have already been addressed by some authors. On one hand, the uniform controllability in the vanishing dispersion limit has been addressed by Glass and Guerrero \cite{GG08, GG09} under different boundary conditions. On the other hand, the controllability properties of the KdV equation in networks with various configurations has caught significant attention in recent years; see, for example, Capistrano-Filho, Parada and da Silva \cite{CFPS25} and the references therein.

With regard to global Carleman estimates for the KdV equation, when a variable main coefficient is considered Baudouin, Cerpa, Crépeau and Mercado \cite{BCCM14} derived a Carleman estimate for a sufficiently regular main coefficient, leading to Lipschitz stability in the inverse problem of retrieving the main coefficient of the equation. Here, we extend this result in a certain sense by allowing discontinuities in the main coefficient. 

Aiming to obtain controllability results and Lipschitz stability for certain inverse problems, global Carleman estimates for PDEs with discontinuous principal coefficients have been derived in various contexts. Most of these, however, impose either a monotonicity condition on the jump of the principal coefficient, strong geometric assumptions at the interface, or both. Given the extensive literature, we mention the early works addressing the heat equation by Doubova, Osses, and Puel \cite{DOP02}, the wave equation by Baudouin, Mercado, and Osses \cite{BMO07}, and the Schrödinger equation by Baudouin and Mercado \cite{BM08}. We also note that the nonphysical monotonicity condition on the principal coefficient has been relaxed in the parabolic setting, see Benabdallah, Dermenjian, and Le Rousseau \cite{BDLR07} in the one-dimensional case, while Le Rousseau and Robbiano \cite{LeRLR22} establish global (and local) Carleman estimates in the multi-dimensional case.

We emphasize that local Carleman estimates are a key tool in establishing unique continuation, observability, and controllability results. Moreover, their derivation typically requires less restrictive geometric conditions, in sharp contrast to the global case. For example, without any monotonicity assumptions or geometric conditions on the interface, local Carleman estimates were studied by Léautaud, Le Rousseau, and Robbiano \cite{LeRLR13} in the multi-dimensional parabolic case with applications to null controllability, and by Filippas \cite{Fil24} for the multi-dimensional wave equation with applications to quantitative approximate control. We also mention Imba \cite{Imb25} for the one dimensional wave equation, where a global Carleman estimate is derived with application to an inverse problem.

\subsection{Outline of the paper} The rest of the article is organized as follows. In \cref{S2} we establish the well-posedness of the uncontrolled KdV equation \eqref{eq:NLKdV-control}-\eqref{eq:tc} along with some regularity results for the linear KdV as well as for its adjoint. In \cref{S3} we derive a new two parameter Carleman estimate. In \cref{S4} we prove the control to the trajectories of the KdV equation. In \cref{S5} we prove the Lipschitz stability of the inverse problem of retrieving a potential term. In \cref{S6} we give final remarks about boundary observability.

\subsubsection{Notation} To make our computations clearer, the symbol $[\cdot]_a$ will denote the jump at $a\in \Gamma$, namely, for a function $\mu$ we write $[\mu]_a:=\mu(a^+)-\mu(a^-)$. Given two quantities $X$ and $Y$, we will employ the notation $X\lesssim Y$ to say that $X\leq CY$ for some $C>0$, possibly depending on several parameters involved in the computations. Often, we will use such notation when the constant does not matter on the analysis or when its dependency is understood.

\subsection*{Acknowledgments} Part of this article began while I was completing my master’s degree. I would like to warmly thank Nicolás Carreño and Alberto Mercado for introducing me into the world of research in control theory and for all their support during that period. 
I am also grateful to the anonymous referees for their valuable comments and suggestions.

This project has received funding from the European Union's Horizon 2020 research and innovation programme under the Marie Sk\l{}odowska-Curie grant agreement No 945332. Part of this work was also supported by the ANID-PFCHA/Magíster Nacional scholarship programme (2020-22201136).


\section{Well-posedness results}\label{S2}

Let us first recall the definition \eqref{S1:def:H3gamma} of $H_{\pw}^3(0, L)$ for $s=3$. By Sobolev embedding, if $u_{|_{I_k}}\in H^3(I_k)$, it also belongs to $C^{2}(\overline{I_k})$ and the operator
\begin{align}\label{S2:trace-op-cont}
    u_{|_{I_k}}\in H^3(I_k)\longmapsto u_{|_{I_k}}\in C^{2}(\overline{I_k})
\end{align}
is continuous. In particular, any $u\in H_{\pw}^3(0, L)$ satisfies $u_{|_{I_k}}\in C^{2}(\overline{I_k})$ for each $k\in\ran{1}{N-1}$. For $u\in H_{\pw}^3(0, L)$, we introduce the transmission conditions
\begin{align}\label{eq:tc-dom}
    \left\{\begin{array}{rrlll}
    u(a_k^-)&=&u(a_k^+), & ~k\in\ran{1}{N-1},\\
\sqrt{p_{k-1}}u'(a_k^-)&=&\sqrt{p_k}u'(a_k^+), & ~k\in\ran{1}{N-1},\\
p_{k-1}u''(a_k^-)&=&p_ku''(a_k^+), & ~k\in\ran{1}{N-1}.
    \end{array}\right.
\end{align}
The natural space for the study of the system \eqref{eq:NLKdV}-\eqref{eq:tc} is defined as
\begin{align*}
    \H_{\tc}^3(0, L)=\big\{u\in H_{\pw}^3(0, L)\ |\ u\text{ satisfies } \eqref{eq:tc-dom}\big\},
\end{align*}
which is a closed subspace of $H_{\pw}^3$ and therefore a Hilbert space endowed with the inherited inner product of $H_{\pw}^3$. Additionally, let us introduce the Banach space
\begin{align*}
    \X_T^0(0, L)=C([0, T], L^2(0, L))\cap L^2(0, T; H^1(0, L)),
\end{align*}
equipped with the norm
\begin{align*}
    \norm{\cdot}_{\X_T^0(0, L)}=\norm{\cdot}_{C^0([0, T]; L^2(0, L))}+\norm{\cdot}_{L^2(0, T; H^1(0, L))}.
\end{align*}

\subsection{Linear Cauchy problem}\label{S2:wp-linear} Let us consider the nonhomogeneous boundary-value problem
\begin{align}\label{eq:bvp-kdv-lin-f}
\left\{
\begin{array}{rll}
y_t+p(x)y_{xxx}=f, & ~(t, x)\in (0, T)\times (0, L),\\
y(t, 0)=h_1(t),\ y(t, L)=h_2(t),\ y_{x}(t, L)=h_3(t), & ~t\in (0, T),\\
y(0, x) =  y_0(x), & ~x\in (0, L),
\end{array}
\right.
\end{align}
coupled along with the transmission conditions \eqref{eq:tc}. Let $\A:\dom(\A)\subset L^2(0, L)\to L^2(0, L)$ be the formally defined linear operator given by $\A=-p(x)\partial_x^3$ with domain 
\begin{align*}
    \dom(\A):=\{u\in H_{\pw}^3(0, L)\ |\ u(0)=u(L)=u'(L)=0\ \text{ and } u \text{ satisfies } \eqref{eq:tc-dom}\}.
\end{align*}
Let us also introduce its formal adjoint operator $\A^*:=p(x)\partial_x^3$ with domain $\dom(\A^*)$ given by those functions $z\in H_{\pw}^3(0, L)$ satisfying $z(0)=z(L)=z_x(0)=0$ and the transmission conditions \eqref{eq:tc-dom}.

\begin{lemma}\label{lem:Awelldef}
    The operators $\A$ and $\A^*$ are well-defined.
\end{lemma}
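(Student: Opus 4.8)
The assertion that $\A$ and $\A^*$ are \emph{well-defined} has two layers. The easy layer is that each operator maps its domain into $L^2(0,L)$: for $u\in\dom(\A)=\H_\Gamma^3(0,L)$ one has $u_{|_{I_k}}\in H^3(I_k)$, hence $\partial_x^3 u_{|_{I_k}}\in L^2(I_k)$, and since $p$ is piecewise constant (thus bounded) we get $\A u=-p\,\partial_x^3 u\in L^2(0,L)$; the same applies verbatim to $\A^*$. The embedding \eqref{S2:trace-op-cont} guarantees in addition that every element of $H_\Gamma^3(0,L)$ is $C^2$ up to each interface, so all the one-sided traces $u(a_k^\pm)$, $u_x(a_k^\pm)$, $u_{xx}(a_k^\pm)$ entering \eqref{eq:tc-dom} are meaningful. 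Since $\dom(\A)$ contains $\bigoplus_{k} C_c^\infty(I_k)$, it is dense in $L^2(0,L)$, so it is legitimate to speak of an adjoint at all. The substantive layer, which is the real content of the lemma, is to check that the explicit formula and domain given for $\A^*$ produce the genuine $L^2$-adjoint, i.e. that
\begin{align*}
    \inn{\A u,z}_{L^2(0,L)}=\inn{u,\A^*z}_{L^2(0,L)}\qquad\text{for all } u\in\dom(\A),\ z\in\dom(\A^*).
\end{align*}

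To prove this identity I would split the integral over the subintervals $I_k$, on each of which $p\equiv p_k$ is constant, and integrate by parts three times. Setting $G:=u_{xx}z-u_xz_x+uz_{xx}$, each subinterval yields
\begin{align*}
    -p_k\int_{I_k}u_{xxx}\,z\,dx=-p_k\big[G\big]_{a_k}^{a_{k+1}}+\int_{I_k}u\,(p_k z_{xxx})\,dx,
\end{align*}
so that summing over $k$ gives $\inn{\A u,z}=\inn{u,\A^*z}+B$ with $B:=-\sum_{k=0}^{N-1}p_k\big[G\big]_{a_k}^{a_{k+1}}$. Everything then reduces to showing $B=0$. Reindexing the endpoint sum separates $B$ into a genuine boundary part at $x=0$ and $x=L$ and a sum of interface parts $-p_{j-1}G(a_j^-)+p_jG(a_j^+)$ over $j\in\ran{1}{N-1}$.

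The boundary part vanishes by matching the two sets of boundary conditions: at $x=L$ the three monomials of $G$ are annihilated respectively by $z(L)=0$, $u_x(L)=0$ and $u(L)=0$, and at $x=0$ by $z(0)=0$, $z_x(0)=0$ and $u(0)=0$. This is precisely where the deliberate asymmetry between $\dom(\A)$ (imposing $u(0)=u(L)=u_x(L)=0$) and $\dom(\A^*)$ (imposing $z(0)=z(L)=z_x(0)=0$) is used. I expect the only delicate step to be the cancellation of the interface parts, which is where the specific form of the transmission conditions is essential. Regrouping $-p_{j-1}G(a_j^-)+p_jG(a_j^+)$ into its three monomial pairs, the $u_{xx}z$ pair cancels using $p_{j-1}u_{xx}(a_j^-)=p_ju_{xx}(a_j^+)$ together with $z(a_j^-)=z(a_j^+)$; the $uz_{xx}$ pair cancels symmetrically from $p_{j-1}z_{xx}(a_j^-)=p_jz_{xx}(a_j^+)$ and $u(a_j^-)=u(a_j^+)$; and the $u_xz_x$ pair cancels because $\sqrt{p_{j-1}}u_x(a_j^-)=\sqrt{p_j}u_x(a_j^+)$ and the identical relation for $z$ multiply to give $p_{j-1}u_x(a_j^-)z_x(a_j^-)=p_ju_x(a_j^+)z_x(a_j^+)$. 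The $\sqrt{p}$-weight in the first-derivative condition and the $p$-weight in the second-derivative condition are tuned exactly so that these three products match across each interface; this is the heart of the matter, the remainder being sign bookkeeping. Once $B=0$ is obtained the adjoint identity holds, completing the lemma.
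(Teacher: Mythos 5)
Your computation is correct, but you and the paper read ``well-defined'' differently, so the two proofs are genuinely different. The paper's concern is the symbol $p\partial_x^3$ itself: for $z$ only piecewise $H^3$, the third distributional derivative on $(0,L)$ contains Dirac masses and dipoles at the interface points (the transmission conditions do not force $[z_x]_{a_k}=[z_{xx}]_{a_k}=0$, only weighted matchings), and multiplying those singular parts by the discontinuous $p$ is meaningless; so the paper pairs the piecewise-defined function against $\vp\in C_c^\infty(0,L)$ and integrates by parts \emph{once} on each $I_k$, observing that the interface contributions $\sum_{k}\vp(a_k)[pz_{xx}]_{a_k}$ vanish by the third condition of \eqref{eq:tc-dom}; hence $p\partial_x^3z=\partial_x(p\partial_x^2z)$ in the sense of distributions on $(0,L)$, with no singular part. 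That argument uses only the third transmission condition, no boundary conditions, and covers $\A$ and $\A^*$ simultaneously. You instead accept the piecewise gluing as the definition (your ``easy layer'', which is legitimate) and prove the Green's identity $\inn{\A u,z}_{L^2(0,L)}=\inn{u,\A^*z}_{L^2(0,L)}$ for $u\in\dom(\A)$, $z\in\dom(\A^*)$ by three integrations by parts per subinterval; your bookkeeping --- the complementary boundary conditions at $0$ and $L$, and the three interface cancellations, including the multiplicative matching of the $\sqrt{p}$-weights --- is exactly right. What your route buys is the duality that the paper never states but implicitly relies on (the dissipativity computations of \cref{prop:semigroup}, the transposition notion of solution in \cref{def:weak-sol}); what the paper's route buys is the fact your easy layer glosses over, namely that the piecewise expression has an intrinsic distributional meaning, which is what makes the PDE in \eqref{eq:kdv-lin-f} meaningful across the interface. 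Two small caveats on your framing: a Green's identity only shows that the stated $\A^*$ is a \emph{restriction} of the Hilbert-space adjoint of $\A$ (identifying the adjoint domain exactly would require a maximality argument, which neither you nor the paper carries out --- the paper only ever claims a \emph{formal} adjoint); and if one does want the distributional statement, your $u_{xx}z$-pair cancellation is the right mechanism, but it must be run against arbitrary test functions $\vp$ rather than against $z\in\dom(\A^*)$, which is precisely what the paper's proof does.
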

\begin{proof}
    Let $z\in \D(\A)$ and set $g:=\sum_{k=0}^{N-1} p_k z_{xxx}(x)\mathbbm{1}_{I_k}$. As $z_{|_{I_k}}\in H^3(I_k)$ for all $k\in\ran{0}{N-1}$, we have $g_{|_{I_k}}=p_kz_{xxx}\in L^2(I_k)$ and thus $g\in L^2(0, L)$. If $\vp\in C_c^\infty(0, L)$, by performing integration by parts on each $I_k$ and adding up all these integrals, we get
    \begin{align*}
        \int_0^L g\vp dx=\sum_{k=0}^{N-1} \int_{a_k}^{a_{k+1}} pz_{xxx} \vp dx&=\sum_{k=1}^N \left(-\int_{a_{k-1}}^{a_k}p_{k-1}z_{xx}\vp_x dx+p_{k-1}z_{xx} \vp\big|_{a_{k-1}}^{a_k}\right)\\
        &=-\int_0^L pz_{xx}\vp_xdx+\sum_{k=1}^{N-1} \vp(a_k)[pz_{xx}]_{a_k}.
    \end{align*}
    The interface terms above vanish due to the transmission condition $[p z_{xx}]_{a_k}=0$ for $k\in\ran{1}{N-1}$. Since $\vp$ is arbitrary, the previous identity implies that $g$ is the weak derivative of $pz_{xx}$. That is, the weak derivative of $pz_{xx}$ coincides a.e. with the piecewise derivative $p_k z_{xxx}$ on each $I_k$. Since $g\in L^2(0, L)$, then $pz_{xx}\in H^1(0, L)$ and $\A z=-g\in L^2(0, L)$ is well-defined. The proof for $\A^*$ is identical.
\end{proof}

We can now employ semigroup theory tools to study the linear Cauchy problem \eqref{eq:kdv-lin-f}-\eqref{eq:tc}.

\begin{proposition}\label{prop:semigroup}
The operators $\A$ and $\A^*$ both generate a strongly continuous semigroup of contractions on $L^2(0, L)$.
\end{proposition}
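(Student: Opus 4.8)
The plan is to apply the Lumer--Phillips theorem, which states that a densely defined operator generates a strongly continuous semigroup of contractions if and only if it is dissipative and maximal (i.e. $\text{Range}(\mathrm{Id}-\A)=L^2(0,L)$), with the analogous criterion for $\A^*$. I would carry this out in three main steps: establishing dissipativity of both $\A$ and $\A^*$, verifying the density of the domains, and proving the range/maximality condition.

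\textbf{Dissipativity.} First I would compute $\inn{\A z, z}_{L^2(0,L)}$ for $z\in\dom(\A)=\H_\Gamma^3(0,L)$. Writing $\A z=-p(x)z_{xxx}$ and integrating by parts on each subinterval $I_k$ (summing the contributions as in the proof of \cref{lem:Awelldef}), the interior terms combine while the interface terms produce jumps weighted by $p$. Crucially, the transmission conditions \eqref{eq:tc-dom} are designed precisely so that these jump terms cancel: the first-derivative matching $\sqrt{p_{k-1}}z'(a_k^-)=\sqrt{p_k}z'(a_k^+)$ makes the quantity $p(z')^2$ continuous across each interface, so the telescoping boundary terms collapse. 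One then expects to find $\inn{\A z, z}=-\tfrac12 p(L)|z'(L)|^2+\tfrac12 p(0)|z'(0)|^2$ up to the interface contributions, and after imposing the boundary conditions $z(0)=z(L)=z'(L)=0$ built into $\H_\Gamma^3$ one obtains $\inn{\A z,z}=-\tfrac12 p(0)|z'(0)|^2\le 0$, hence dissipativity. The same computation for $\A^*z=p(x)z_{xxx}$ with its domain (where $z(0)=z(L)=z'(0)=0$) yields $\inn{\A^*z,z}=-\tfrac12 p(L)|z'(L)|^2\le 0$, confirming $\A^*$ is dissipative as well. This also verifies that $\A^*$ is genuinely the adjoint of $\A$, consistent with the boundary/interface structure.

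\textbf{Maximality.} Next I would show $\mathrm{Id}-\A$ (equivalently $\mathrm{Id}+p\partial_x^3$ with the domain conditions) is surjective: given $f\in L^2(0,L)$, solve $z+p z_{xxx}=f$ with the homogeneous boundary conditions $z(0)=z(L)=z'(L)=0$ and the transmission conditions \eqref{eq:tc-dom}. The standard route is a variational/Lax--Milgram argument on an appropriate Hilbert space, or alternatively constructing the resolvent directly and checking the regularity $z\in\H_\Gamma^3(0,L)$ via elliptic estimates piecewise on each $I_k$, gluing through the interface conditions. Since $\dom(\A)$ contains $C_c^\infty(I_k)$-type functions (or more simply is dense because it contains a dense subspace of $L^2$), density is immediate, so Lumer--Phillips applies. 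By the corresponding statement (or by duality, since an operator generates a contraction semigroup iff its adjoint does), the same conclusion transfers to $\A^*$.

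\textbf{Main obstacle.} I expect the delicate point to be the maximality (surjectivity) step rather than dissipativity. The third-order operator with a discontinuous coefficient and coupled transmission conditions makes a clean Lax--Milgram setup nontrivial: one must identify a bilinear form whose coercivity on the interface-constrained space is not obvious, since the natural energy from a third-order operator is not sign-definite without carefully exploiting the weighting $\sqrt{p_k}$ in \eqref{eq:tc-dom}. The cleanest approach is likely to establish surjectivity piecewise by an ODE/transfer-matrix argument across the $N$ subintervals, matching the six interface plus boundary conditions, and verifying that the resulting linear system for the matching data is invertible; the monotonicity-type structure and the square-root scaling in the transmission conditions should guarantee this, but confirming invertibility uniformly is where the real work lies.
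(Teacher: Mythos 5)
Your dissipativity computations are correct and coincide exactly with the paper's: integrating by parts on each $I_k$, the transmission conditions \eqref{eq:tc-dom} make $p|z_x|^2$ and $pz_{xx}$ continuous across each $a_k$ (and $z$ itself is continuous), so all interface contributions telescope away, leaving $\inn{\A z, z}_{L^2(0, L)}=-\tfrac{p_0}{2}|z_x(0)|^2\leq 0$ and $\inn{z, \A^* z}_{L^2(0, L)}=-\tfrac{p_{N-1}}{2}|z_x(L)|^2\leq 0$, which is precisely what the paper obtains.

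The gap is the maximality step. You correctly flag it as the delicate point, but you do not prove it: you name two candidate strategies (Lax--Milgram, a piecewise ODE/transfer-matrix argument) and end by conceding that confirming the invertibility of the resulting matching system "is where the real work lies." As written, the surjectivity of $\mathrm{Id}-\A$ is not established, and your own concern is legitimate: there is no obvious coercive bilinear form for a third-order operator with these coupled interface conditions, so the Lax--Milgram route is far from routine. What you are missing is that this step can be bypassed entirely, and that is what the paper does. Since $\A$ is closed and densely defined (e.g. functions that are smooth and compactly supported in $(0, L)\setminus\Gamma$ belong to $\H_\Gamma^3(0, L)$ and are dense in $L^2(0, L)$), and since \emph{both} $\A$ and its adjoint are dissipative, the standard corollary of the Lumer--Phillips theorem (see \cite[Chapter 1, Section 4]{Pazy82}) gives generation of a contraction semigroup with no range condition to check. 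The mechanism is exactly the pair of computations you already did: dissipativity plus closedness of $\A$ makes $\mathrm{Range}(\mathrm{Id}-\A)$ closed, and if it were a proper subspace, any nonzero annihilator $\varphi$ would satisfy $(\mathrm{Id}-\A^*)\varphi=0$, contradicting $\norm{\varphi}\leq\norm{(\mathrm{Id}-\A^*)\varphi}$ from dissipativity of $\A^*$. So the two integrations by parts are not "half" of the proof; properly exploited, they are essentially all of it, and the symmetric statement for $\A^*$ follows by applying the same corollary with the roles of $\A$ and $\A^*$ exchanged (or by the duality remark you make at the end).
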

\begin{proof}
The operators $\A$ and $\A^*$ are both closed and densely defined. If $z\in \D(\A)$ then
\begin{align*}
    \inn{\A z, z}_{L^2(0, L)}&=-\int_0^L p(x)z_{xxx}zdx\\
    &=\sum_{k=1}^N \left(\int_{a_{k-1}}^{a_k}p_{k-1}z_{xx}z_xdx-p_{k-1}z_{xx}z\Big|_{a_{k-1}}^{a_k}\right)\\
    &=\sum_{k=1}^N\left(\dfrac{p_{k-1}}{2} \big(|z_x(a_k^-)|^2-|z_x(a_{k-1}^+)|^2\big)-p_{k-1}z_{xx}(a_k^-)z(a_k^-)+p_{k-1}z_{xx}(a_{k-1}^+)z(a_{k-1}^+)\right)\\
    &=-\dfrac{p_0}{2}|z_x(0)|^2+\sum_{k=1}^{N-1}\left(-\dfrac{1}{2}[p|z_x|^2]_{a_k}+z(a_k)[pz_{xx}]_{a_k}\right)\\
    &=-\dfrac{p_0}{2}|z_x(0)|^2\leq 0.
\end{align*}
In a similar way, $\A^*$ is also dissipative since
\begin{align*}
    \inn{z, \A^* z}_{L^2(0, L)}=-\dfrac{p_{N-1}}{2}|z_x(L)|^2\leq 0.
\end{align*}
The conclusion follows from the classical Lumer-Phillips Theorem.
\end{proof}

By classical semigroup theory \cite[Chapter 4]{Pazy83}, the previous proposition implies that the linear problem \eqref{eq:bvp-kdv-lin-f}-\eqref{eq:tc} with homogeneous boundary data $\vec{h}=(h_1, h_2, h_3)=(0, 0, 0)$ admits a unique classical solution in the class $C^0([0, T], \D(\A))\cap C^1([0, T], L^2(0, L))$ whenever $f\in C^1([0, T], L^2(0, L))$ and $y_0\in\D(\A)$. To treat the nonhomogeneous linear problem,
for $T>0$ we set
\begin{align*}
    \mc{Z}_{0, T}:=L^2(0, L)\times H^1(0, T)\times H^1(0, T)\times L^2(0, T).
\end{align*}
Using semigroup theory and the multipliers method, we obtain the following well-posedness result for the linear Cauchy problem \eqref{eq:bvp-kdv-lin-f}-\eqref{eq:tc} along with the classical Kato smoothing effect.

\begin{proposition}\label{prop:wp-kdv-f-1}
Let $T>0$. Let $f\in L^1(0, T; L^2(0, L))$ and $(y_0, \Vec{h})\in \mc{Z}_{0, T}$. Then there exists a unique solution $y$ of the KdV equation \eqref{eq:bvp-kdv-lin-f}-\eqref{eq:tc} that belongs to $\X_T^0(0, L)$. Also, there exists $C=C(T, L, \Gamma, p)>0$ such that
\begin{align}\label{ineq:wp-kdv-f-1}
    \norm{y}_{\X_T^0(0, L)}\leq C\big(\norm{(y_0, \Vec{h})}_{\mc{Z}_{0, T}}+\norm{f}_{L^1(0, T; L^2(0, L))}\big).
\end{align}
\end{proposition}

\begin{proof}
Let us assume that $f\in C^1([0, T], L^2(0, L))$, $y_0\in \H_{\tc}^3(0, L)$ and $\Vec{h}=(h_1, h_2, h_3)\in C^\infty([0, T])^3$ satisfying the compatibility condition
\begin{align*}
    y_0(0)=h_1(0),\ y_0(L)=h_2(0)\ \text{ and }\ y_0'(L)=h_3(0).
\end{align*}
Set $\ld:=4\sum_{k=1}^{N-1}(L-a_k)^{-1}$ and let $\psi^D$ be defined to lift the Dirichlet boundary data,
\begin{align*}
    \psi^D(t, x):=\dfrac{(L-x)^2}{L^2}\prod_{k=1}^{N-1}\left(\dfrac{x-a_k}{a_k}\right)^4h_1(t)+\left(\frac{x(2L-x)}{L^2}+\frac{\ld x(L-x)}{L}\right)\prod_{k=1}^{N-1}\left(\dfrac{x-a_k}{L-a_k}\right)^4h_2(t),
\end{align*}
which satisfies $\psi^D(t, 0)=h_1(t)$, $\psi^D(t, L)=h_2(t)$, $\psi^D_x(t, L)=0$ and the transmission conditions \eqref{eq:tc}with $[\psi^D]_{a_k}=[\sqrt{p}\psi_x^D]_{a_k}=[p\psi_{xx}^D]_{a_k}=0$ for each $k\in\ran{1}{N-1}$. Thereby, $z:=y-\psi^D$ satisfies
\begin{align}\label{eq:kdv-lift}
\left\{
\begin{array}{rll}
z_t+p(x)z_{xxx}=\widetilde{f}, & ~(t, x)\in (0, T)\times (0, L),\\
z(t, 0)=z(t, L)=0,\ z_{x}(t, L)=h_3(t), & ~t\in (0, T),\\
z(0, x)= y_0(x)-\psi^D(0, x), & ~x\in (0, L),
\end{array}
\right.
\end{align}
coupled with the transmission conditions \eqref{eq:tc} and $\widetilde{f}:=f-(\psi^D_t+p(x)\psi^D_{xxx})\in C^1([0, T], L^2(0, L))$. By means of semigroup theory \cite[Chapter 4]{Pazy83} to treat the nonhomogeneous term $\widetilde{f}$, we can once again choose $\psi^N$ to lift the boundary Neumann data \cite[Proposition 2]{Crepeau16} so that $w:=z-\psi^N$ is a classical solution in $C^0([0, T], \D(\A))\cap C^1([0, T], L^2(0, L))$ of the corresponding homogeneous boundary problem with source term $g:=\widetilde{f}-(\psi_t^N+p(x)\psi_{xxx}^N)\in C^1([0, T], L^2(0, L))$ and initial data $w(0, \cdot)=y_0-\psi^D(0, \cdot)-\psi^N(0, \cdot)\in\D(\A)$. From this we obtain a unique classical solution solution $z=w+\psi^N\in C^0([0, T], \H_{\tc}^3(0, L))\cap C^1([0, T], L^2(0, L))$ to \eqref{eq:kdv-lift}.

Let $q\in C^0([0, T]\times [0, L])$ be such that $q_{|_{I_k}}\in C^\infty([0, T]\times \overline{I_k})$ for $k\in\ran{0}{N-1}$. Performing several integration by parts we obtain
\begin{align*}
    2\int_0^s\int_{I_k} qzz_tdxdt=-\int_0^s\int_{I_k} q_t|z|^2dxdt+\int_{I_k}q|z|^2\bigg|_0^sdx
\end{align*}
and
\begin{multline*}
    2\int_0^s\int_{I_k}qpzz_{xxx}dxdt=-\int_0^s\int_{I_k}pq_{xxx}|z|^2dxdt+3\int_0^s\int_{I_k} pq_x|z_x|^2dxdt\\
    +\int_0^s(pq_{xx}|z|^2-pq|z_x|^2-2pq_xzz_x+2pqzz_{xx})\bigg|_{a_k}^{a_{k+1}}dt,
\end{multline*}
for each $k\in\ran{0}{N-1}$. By adding these equations we get
\begin{multline*}
    3\int_0^s\int_0^L pq_x|z_x|^2dxdt+\int_0^L q|z|^2\bigg|_0^sdx=\int_0^s\int_0^L (q_t+pq_{xxx})|z|^2dxdt+2\int_0^s\int_0^L qz\widetilde{f}dxdt\\
    +\int_0^s p_{N-1}q(L)|h_3(t)|^2dt-\int_0^s p_0q(0)|z_x(t, 0)|^2dt+\sum_{k=1}^{N-1}\int_0^s\big([pq_{xx}]_{a_k}|z(t, a_k)|^2-[q]_{a_k}p_k|z_x(t, a_k^+)|^2\big)dt\\+2\sum_{k=1}^{N-1}\int_0^s\big([q]_{a_k} z(t, a_k)p_kz_{xx}(t, a_k^+)-[\sqrt{p}q_x]_{a_k}z(t, a_k)\sqrt{p_k}z_x(t, a_k^+)\big)dt.
\end{multline*}
On the one hand, if we set $q\equiv 1$ on $[0, L ]$, we obtain
\begin{align*}
    \int_0^s p_0|z_x(t, 0)|^2dt+\int_0^L |z(s, x)|^2dx=\int_0^L |z(0, x)|^2dx+\int_0^sp_{N-1}q(L)|h_3(t)|^2dt+2\int_0^s\int_0^L z\widetilde{f}dxdt
\end{align*}
By Cauchy-Schwarz and then Young's inequality, we get
\begin{align*}
    \norm{z}_{C^0([0, T], L^2(0, L))}^2\leq C\big(\norm{z(0,\cdot)}_{L^2(0, L)}^2+\norm{h_3}_{L^2(0, T)}^2+\norm{\widetilde{f}}_{L^1(0, T; L^2(0, L))}^2\big).
\end{align*}
On the other hand, by setting $s=T$ and choosing $q_0(x)=x/\sqrt{p_0}$ for $x\in I_0$ and $q_k(x)=(x-a_k)/\sqrt{p_k}+q_{k-1}(a_k^-)$ for $x\in I_k$ for all $k\in\ran{1}{N-1}$, we readily get the identity
\begin{multline*}
    3\int_0^T\int_0^L \sqrt{p}|z_x|^2dxdt+\int_0^L q|z(T, x)|^2dx=\int_0^L q|z(0, x)|^2dx\\+\int_0^Tp_{N-1}q(L)|h_3(t)|^2dt+2\int_0^T\int_0^L qz\widetilde{f}dxdt.
\end{multline*}
The above identity together with the previous estimates implies
\begin{align*}
    \norm{z_x}_{L^2(0, T; L^2(0, L))}^2\leq C\big(\norm{z(0,\cdot)}_{L^2(0, L)}^2+\norm{h_3}_{L^2(0, T)}^2+\norm{\widetilde{f}}_{L^1(0, T; L^2(0, L))}^2\big).
\end{align*}
First, using that $H^1(0, T)\hookrightarrow C^0([0, T])$, we have
\begin{align*}
    \norm{z(0, \cdot)}_{L^2(0, L)}\leq C\big(\norm{y_0}_{L^2(0, L)}+\norm{(h_1, h_2)}_{H^1(0, T)^2}\big).
\end{align*}
Second, recalling that $\widetilde{f}=f-(\psi^D_t+p(x)\psi^D_{xxx})$, we get
\begin{align*}
    \norm{\widetilde{f}}_{L^1(0, T; L^2(0, L))}\leq C\big(\norm{(h_1, h_2)}_{H^1(0, T)^2}+\norm{f}_{L^1(0, T; L^2(0, L))}\big).
\end{align*}
Third, we have
\begin{align*}
    \norm{\psi^D}_{\mc{X}_T^0(0, L)}\leq C\big(\norm{(h_1, h_2)}_{C^0([0, T])^2}+\norm{(h_1, h_2)}_{L^2(0, T)^2}\big)\leq C\norm{(h_1, h_2)}_{H^1(0, T)^2}.
\end{align*}
Gathering the above estimates and using that $y=z+\psi^D$, we obtain estimate \eqref{ineq:wp-kdv-f-1}. Finally, for data $f\in L^1(0, T; L^2(0, L))$ and $(y_0, \Vec{h})\in \mc{Z}_{0, T}$, linearity and a classical density argument through smooth enough compatible data yields the desired unique solution $y\in \mc{X}_T^0(0, L)$; see for instance Coron \cite[Section 2.2]{Cor07}.
\end{proof}

\begin{remark}
The multiplier employed in the previous proof was introduced by Crépeau \cite{Crepeau16}, capturing the transmission conditions and being similar in spirit to the one used by Rosier \cite{Ros97}.
\end{remark}

\subsection{Nonlinear system} Let $\mu\in L^\infty(0, L)$ and $z\in \X_T^0(0, L)$, and consider the system
\begin{align}\label{eq:NLKdV-fixedpoint}
\left\{
\begin{array}{rll}
y_t+p(x)y_{xxx}=-\mu(x)z_x-zz_x, & ~(t, x)\in (0, T)\times (0, L),\\
y(t, 0)=h_1(t),\ y(t, L)=h_2(t),\ y_{x}(t, L)=h_3(t), & ~t\in (0, T),\\
y(0, x) =  y_0(x), & ~x\in (0, L),
\end{array}
\right.
\end{align}
coupled by the transmission conditions \eqref{eq:tc}. Due to the Kato-smoothing effect and \cref{app:prop:yyxH1}, $-zz_x$ is allowed as a source term and for $(y_0, \Vec{h})\in \mc{Z}_{0, T}$ we can introduce the map $\F_{(y_0, \vec{h})}: \X_T^0(0, L)\to \X_T^0(0, L)$ to be the map defined by $\F_{(y_0, \Vec{h})}(z)=y$, where $y$ is the solution of \eqref{eq:NLKdV-fixedpoint}. 

We now establish global well-posedness using a classical fixed-point argument. Assuming more regularity on the initial data, we can guarantee the existence of a more regular solution following Bona-Sun-Zhang \cite[Theorem 4.1]{BSZ03}. To this end, let us introduce for $j\in \N$,
\begin{align*}
\begin{array}{c}
    \mc{Z}_{3j, T}:=H_{\pw}^{3j}(0, L)\times H^{1+j}(0, T)\times H^{1+j}(0, T)\times H^j(0, T),\\
    \X_{\pw, T}^{3j}(0, L)=C([0, T], H_{\pw}^{3j}(0, L))\cap L^2(0, T; H_{\pw}^{3j+1}(0, L)),
\end{array}
\end{align*}
both of them equipped with their natural norms. Since we need data that is compatible with the boundary and interface conditions of our equation, we introduce the following definition.
\begin{definition}\label{def:compatible}
    Let $\mu\in H_{\pw}^{3(j-1)}(0, L)$. Given $(y_0, \Vec{h})\in \mc{Z}_{3j, T}$ define $(\phi_\ell)_{\ell\geq 1}$ recursively by
    \begin{align*}
        \left\{\begin{array}{lc}
            \phi_0:=y_0, & \\
            \phi_\ell:=-p(x)(\phi_{\ell-1})'''-\mu(x)\phi_{\ell-1}'-\sum_{i=0}^{\ell-1}(\phi_{i}\phi_{\ell-1-i})'     & \text{on each } I_k \text{ for } k\in\ran{0}{N-1}.
        \end{array}\right.
    \end{align*}
    We say that $(y_0, \Vec{h})\in \mc{Z}_{3j, T}$ is $3j$-compatible if
    \begin{align*}
    \left\{\begin{array}{ll}
        \phi_\ell(0)=h_1^{(\ell)}(0),\ \phi_\ell(L)=h_2^{(\ell)}(0),\ \phi_\ell'(L)=h_3^{(\ell)}(0),     & \ell\in\ran{0}{j-1}, \\
        \phi_\ell \text{ satisfies the transmission conditions } \eqref{eq:tc-dom}, & \ell\in\ran{0}{j-1}.
    \end{array}\right.
    \end{align*}
\end{definition}

The result is the following.

\begin{proposition}\label{prop:NLwp-Xs}
Let $T>0$, $L>0$ and $\mu\in L^\infty(0, L)$. For any $(y_0,\Vec{h})\in \mc{Z}_{0, T}$ the system \eqref{eq:NLKdV}-\eqref{eq:tc} has a unique solution $y\in \X_T^0(0, L)$ satisfying
\begin{align}\label{S1:prop:ineq:KdV-X0}
    \norm{y}_{\X_T^0(0, L)}\leq C\big(\norm{(y_0, \vec{h})}_{\mc{Z}_{0, T}}\big)\norm{(y_0, \vec{h})}_{\mc{Z}_{0, T}},
\end{align}
for some continuous, non-decreasing $C: \R^+\to\R^+$ depending on $\norm{\mu}_{L^\infty(0, L)}$. If, additionally, $j\geq 1$, $\mu\in H_{\pw}^{3(j-1)}(0, L)$ and $(y_0, \vec{h})\in \mc{Z}_{3j, T}$ is $3j$-compatible data, then $y\in \X_{\pw, T}^{3j}(0, L)$ with continuous dependency on $(y_0, \vec{h})$.
\end{proposition}
\begin{proof}
Observe that the constant $C>0$ given by \cref{prop:wp-kdv-f-1} is affine on $T>0$. Let us set $\mc{F}=\mc{F}_{(y_0, \vec{h})}$. Then, for any $0<\tau\leq T$, we have
\begin{align*}
    \norm{\F(z)}_{\X_\tau^0(0, L)}\leq C\left(\norm{(y_0, \vec{h})}_{\mc{Z}_{0, T}}+\norm{\mu z_x}_{L^1(0, \tau; L^2(0, L)}+\norm{zz_x}_{L^1(0, \tau;L^2(0, L))}\right).
\end{align*}
By using \cref{app:lem:L1Hs-estimate} on $(0, L)$, the previous estimate implies
\begin{align*}
    \norm{\F(z)}_{\X_\tau^0(0, L)}\leq C\norm{(y_0, \vec{h})}_{\mc{Z}_{0, \tau}}+C_1\norm{\mu}_{L^\infty}\tau^{1/2}\norm{z}_{\X_\tau^0(0, L)}+C_1(\tau^{1/2}+\tau^{1/3})\norm{z}_{\X_\tau^0(0, L)}^2.
\end{align*}
for some $C_1>0$. Let $R>0$ be such that
\begin{align*}
    R=2C\norm{(y_0, \vec{h})}_{\mc{Z}_{0, T}}\ \text{ and }\ C_1\norm{\mu}_{L^\infty}\tau^{1/2}+C_1(\tau^{1/2}+\tau^{1/3})R \leq \dfrac{1}{4}.
\end{align*}
Then for small $\tau$, the map $\F$ reproduces closed ball $\mathbb{B}_R=\{z\in \X_\tau^0: \norm{z}_{\X_\tau^0(0, L)}\leq R\}$ and
\begin{align*}
    \norm{\F(z_1)-\F(z_2)}_{\X_\tau^0}&\leq \big(C_1\norm{\mu}_{L^\infty}\tau^{1/2}+C_1(\tau^{1/2}+\tau^{1/3})(\norm{z_1}_{\X_\tau^0}+\norm{z_2}_{\X_\tau^0})\Big)\norm{z_1-z_2}_{\X_\tau^0}\\
    &\leq \dfrac{1}{2}\norm{z_1-z_2}_{\X_\tau^0}.
\end{align*}
The smallness condition imposed by $R$ allows us to apply the Banach fixed-point theorem in time $\tau\leq T$, which give us a unique fixed point $y$ of $\F$ belonging to $\mathbb{B}_R$ and by consequence being the unique solution $y\in \X_\tau^0(0, L)$ of \eqref{eq:NLKdV-traj}. To globalize the solution, we multiply the nonlinear equation by $y$ (that is, choosing $q\equiv 1$ in the analogous multiplier identity obtained in \cref{prop:wp-kdv-f-1}) and then integrate by parts in $(0, L)$. First, we manage the potential contribution $\big|\int_0^L \mu(x)yy_xdx\big|\leq\norm{\mu}_{L^\infty}\norm{y}_{L^2}\norm{y_x}_{L^2}$, and then noticing that the nonlinear term contributes $\int_0^L y(yy_x)dx=\left.\frac{1}{3}y^3\right|_{0}^L=\frac{1}{3}(h_2^3-h_1^3)$ which are controlled by Sobolev embedding $H^1(0, T)\hookrightarrow C^0([0, T])$, by a Grönwall estimate we get
\begin{align*}
    \sup_{t\in [0, T]} \norm{y(t)}_{L^2(0, L)}\leq C(\norm{(y_0, \vec{h})}_{\mc{Z}_{0, T}})\norm{(y_0, \vec{h})}_{\mc{Z}_{0, T}}.
\end{align*}

Hence we can pick $\tau\in (0, T]$ only depending on $\norm{(y_0, \vec{h})}_{\mc{Z}_{0, T}}$ and, up to shrinking $\tau$ so that $n\tau=T$ for some $n\in \N$, we can extend the previous argument on intervals $(\tau, 2\tau], (2\tau, 3\tau],\ldots, ((n-1)\tau, n\tau=T]$. Along with an standard uniqueness argument, the existence of a unique solution $y\in \X_T^0(0, L)$ is guaranteed. A bootstrap argument lead us to estimate \eqref{S1:prop:ineq:KdV-X0}.

If $(y_0, \vec{h})\in \mc{Z}_{3, T}$, we can set $z:=y_t$ and look at the equation it satisfies. We can replicate the previous reasoning to get that $\norm{z}_{\X_T^0(0, L)}\leq C_1\norm{(y_0, \vec{h})}_{\mc{Z}_{3, T}}$, where $C_1$ is continuous non-decreasing in $\norm{(y_0, \vec{h})}_{\mc{Z}_{0, T}}$. Thus, by carefully using the fact that $z=-p(x)y_{xxx}-\mu(x) y_x-yy_x$ (recall that $p$ is piecewise constant) we obtain that $y\in \mc{X}_{\pw, T}^3(0, L)$. Following a similar procedure one can establish the existence of solutions in $\mc{X}_{\pw, T}^{3j}(0, L)$ for $j\ge 2$, whenever $\mu\in H_{\pw}^{3(j-1)}(0, L)$ to handle the potential term. The details are easily fulfilled following \cite[Section 4]{BSZ03}.
\end{proof}

\begin{remark}\label{rk:bdry-reg}
    In view of the results in the smooth case \cite[Theorem 1.3]{BSZ03}, it is most likely that the functional space $\mc{Z}_{3j, T}$ is not sharp,. Obtaining the corresponding smoothing effects to lower the regularity of the boundary data $(h_1, h_2)$ in our discontinuous setting is an interesting problem.
\end{remark}

\subsection{Adjoint system} We now introduce the notion of weak solutions that will be used for our controllability problem with homogeneous boundary conditions in \cref{S3}. Let us consider
\begin{align}\label{eq:kdv-lin-f}
\left\{
\begin{array}{rll}
y_t+p(x)y_{xxx}+y_x=f, & ~(t, x)\in (0, T)\times (0, L),\\
y(t, 0)=y(t, L)=y_{x}(t, L)=0, & ~t\in (0, T),\\
y(0, x) =  y_0(x), & ~x\in (0, L),
\end{array}
\right.
\end{align}
coupled by the corresponding transmission conditions \eqref{eq:tc}. The following definition is motivated by performing integration by parts as the ones done in \cref{prop:semigroup}.

\begin{definition}\label{def:weak-sol}
    For $(f, y_0)\in L^2(0, T; H^{-1}(0, L))\times L^2(0, L)$ a function $y\in C([0, T], L^2(0, L))$ is called a weak solution of \eqref{eq:kdv-lin-f}-\eqref{eq:tc} if it satisfies
    \begin{align*}
        \iint_Q ygdxdt+(y(T), \vp_T)_{L^2(0, L)}=\int_0^T \inn{f, \vp}_{H^{-1}(0, L)\times H_0^1(0, L)}dt+(y_0,\vp(0))_{L^2(0, L)},
    \end{align*}
    for all $(g, \vp_T)\in L^1(0, T; L^2(0, L))\times L^2(0, L)$, where $\vp$ is the mild solution of the adjoint system
    \begin{align}\label{eq:kdvadj}
        \left\{
        \begin{array}{rll}
        -\vp_t-p(x)\vp_{xxx}-\vp_x=g, & ~(t, x)\in (0, T)\times (0, L),\\
        \vp(t, 0)=\vp(t, L)=\vp_{x}(t, 0)=0, & ~t\in (0, T),\\
        \vp(T, x) =  \vp_T(x), & ~x\in (0, L),
        \end{array}
        \right.
    \end{align}
    coupled by the corresponding transmission conditions \eqref{eq:tc}.
\end{definition}

Let $\widetilde{\A}:=-p(x)\partial_x^3-\partial_x$ with $\D(\widetilde{\A})=\D(\A)$. Straightforward computations show that the conclusions of \cref{prop:wp-kdv-f-1} still hold true when $\A$ is replaced by $\widetilde{\A}$. Hence, following the same approach as in \cref{prop:wp-kdv-f-1} but for the adjoint equation, we can ensure that for any $\vp_T\in L^2(0, L)$, there is a unique mild solution $\vp$ of \eqref{eq:kdvadj} which belongs to $\X_T^0(0, L)$. In particular, it also enjoys the Kato-type smoothing effect and henceforth the above definition makes sense. From now on we relabel $\widetilde{\A}$ by $\A$.

\begin{remark}
    A simple computation using integration by parts shows that a (piecewise) regular solution $y$ of \eqref{eq:kdv-lin-f}-\eqref{eq:tc} is also a solution in the above sense.
\end{remark}

We now establish some regularity estimates for the adjoint system \eqref{eq:kdvadj} that will be needed later in \cref{S4}. We recall the definition of $H_{\pw}^s(0, L)$ given in \eqref{S1:def:H3gamma} and we remark that in the next result we make the notational convention $H_{\pw}^{-1}(0, L):=H^{-1}(0, L)$, where $H^{-1}$ is the usual dual space of $H_0^1$ equipped with the dual norm
\begin{align*}
    \norm{\psi}_{H^{-1}}=\sup_{\substack{h\in H_0^1(0, L)\\ \norm{h}_{H_0^1}\leq 1}}\left|\int_0^L \psi hdx\right|,
\end{align*}
as a consequence of Riesz's representation theorem.

\begin{proposition}\label{prop:wp-kdvadj-Xs}
Let $T>0$. If $\vp_T\in \D(\A^*)$ and $g\in L^1(0, T; \D(\A^*))$, then there exists a unique strong solution $\vp$ of the adjoint equation \eqref{eq:kdvadj}-\eqref{eq:tc} such that, for some $C=C(T, L, \Gamma, p)>0$,
\begin{align}\label{prop:eq:kdvadj-l1Hs}
    \norm{\vp}_{C([0, T], H_{\pw}^3(0, L))\cap L^2(0, T; H_{\pw}^4(0, L))}\leq C\big(\norm{\vp_T}_{H_{\pw}^3(0, L)}+\norm{g}_{L^1(0, T; H_{\pw}^3(0, L))}\big).
\end{align}
Additionally, if $g\in L^2(0, T; \D(\A^*))$, then for $s\in \{0, 1, 2, 3\}$, we have
\begin{align}\label{prop:eq:kdvadj-l2Hs}
    \norm{\vp}_{L^2(0, T; H_{\pw}^{s+1}(0, L))}\leq C\big(\norm{\vp_T}_{H_{\pw}^s(0, T)}+\norm{g}_{L^2(0, T; H_{\pw}^{s-1}(0, L))}\big).
\end{align}
\end{proposition}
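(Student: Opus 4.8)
The plan is to reduce the backward problem \eqref{eq:kdvadj} to a forward Cauchy problem for the semigroup generated by $\A^*$, and then to combine abstract semigroup regularity with the Kato smoothing effect already exploited in \cref{prop:wp-kdv-f-1}. Setting $\psi(t,\cdot):=\vp(T-t,\cdot)$ and $\tilde g(t,\cdot):=g(T-t,\cdot)$, the equation $-\vp_t-p(x)\vp_{xxx}=g$ becomes $\psi_t=\A^*\psi+\tilde g$ with $\psi(0)=\vp_T$, so that by \cref{prop:semigroup} the solution is given by Duhamel's formula
\[
\psi(t)=e^{t\A^*}\vp_T+\int_0^t e^{(t-s)\A^*}\tilde g(s)\,ds .
\]
A preliminary, purely elliptic, observation used throughout is that on $\D(\A^*)$ the graph norm is equivalent to the $H_\Gamma^3$-norm: since $p$ is piecewise constant with $p\ge\rho_0>0$, the identity $\A^*z=w$ reads $z_{xxx}=w/p_k$ on each $I_k$, whence $\norm{z}_{H_\Gamma^3}\lesssim \norm{z}_{L^2}+\norm{\A^* z}_{L^2}$ after controlling the intermediate derivatives by Gagliardo--Nirenberg interpolation on each interval; the reverse inequality is immediate.

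For the first estimate \eqref{prop:eq:kdvadj-l1Hs}, since $\vp_T\in\D(\A^*)$ and $g\in L^1(0,T;\D(\A^*))$, I commute $\A^*$ through the Duhamel formula to obtain $\A^*\psi(t)=e^{t\A^*}\A^*\vp_T+\int_0^t e^{(t-s)\A^*}\A^*\tilde g(s)\,ds$, and the contraction property $\norm{e^{t\A^*}}_{L^2\to L^2}\le 1$ then gives $\norm{\A^*\psi(t)}_{L^2}\le \norm{\A^*\vp_T}_{L^2}+\norm{\tilde g}_{L^1(0,T;\D(\A^*))}$, together with the analogous bound for $\norm{\psi(t)}_{L^2}$. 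By the graph-norm equivalence this yields the $C([0,T];H_\Gamma^3)$ bound. For the $L^2(0,T;H_\Gamma^4)$ part I would observe that $v:=\A^*\psi$ is itself the mild solution of $v_t=\A^* v+\A^*\tilde g$ with datum $\A^*\vp_T\in L^2$ and source $\A^*\tilde g\in L^1(0,T;L^2)$; applying to $v$ the Kato smoothing estimate of \cref{prop:wp-kdv-f-1} (valid for the semigroup generated by $\A^*$, by density from $\D(\A^*)$) bounds $\norm{v_x}_{L^2((0,T)\times(0,L))}$ by the data. Because $p$ is piecewise constant, $v_x=p\,\vp_{xxxx}$ on each $I_k$, so this controls $\norm{\vp_{xxxx}}_{L^2(0,T;L^2(0,L))}$; combining with the already established $C([0,T];H_\Gamma^3)$ bound for the lower-order derivatives gives \eqref{prop:eq:kdvadj-l1Hs}.

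The family \eqref{prop:eq:kdvadj-l2Hs} I would obtain by an endpoint-plus-interpolation scheme. The case $s=3$ is contained in the first estimate, since an $L^2(0,T;\D(\A^*))$ source is in particular an $L^1$ source on the finite interval $(0,T)$. The base case $s=0$ is exactly the Kato smoothing $\norm{\vp_x}_{L^2(0,T;L^2)}\lesssim \norm{\vp_T}_{L^2}+\norm{g}_{L^2(0,T;H^{-1})}$, which follows from the multiplier identity of \cref{prop:wp-kdv-f-1} extended to $H^{-1}$ sources by a duality/transposition argument (writing the source in divergence form and moving one derivative onto the multiplier). The intermediate cases $s=1,2$ then follow by interpolating the linear data-to-solution map between the endpoints $s=0$ and $s=3$, using that $H_\Gamma^s$ interpolates componentwise as a direct sum of the $H^s(I_k)$.

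I expect the main obstacle to be the rigorous handling of the interface in the smoothing step. The gain of one space derivative rests on the multiplier computation of \cref{prop:wp-kdv-f-1}, in which integration by parts on each $I_k$ produces trace terms at the points of $\Gamma$; the transmission conditions \eqref{eq:tc} --- with their $\sqrt{p}$-weighting --- are precisely what forces these interface contributions to cancel, exactly as in the dissipativity computation of \cref{prop:semigroup}. Verifying this cancellation at the level of $v=\A^*\psi$ (equivalently, that one may differentiate and apply $\A^*$ across the interface without boundary defects) is the delicate point, and the piecewise-constant structure of $p$ is what makes it go through, since differentiation commutes with multiplication by $p$ on each subinterval.
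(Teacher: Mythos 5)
Your argument for \eqref{prop:eq:kdvadj-l1Hs} and for the case $s=0$ of \eqref{prop:eq:kdvadj-l2Hs} is correct and essentially identical to the paper's: commute $\A^*$ through Duhamel's formula, apply the Kato smoothing of \cref{prop:wp-kdv-f-1} to $w=\A^*\vp$ (the paper runs the multiplier computation with the mirrored multiplier $q_{N-1}(x)=(x-L)/\sqrt{p_{N-1}}$, since the adjoint boundary conditions put the smoothing trace at $x=L$), and recover $\vp_{xxxx}$ from $w_x$ using that $p$ is piecewise constant. The remaining cases of \eqref{prop:eq:kdvadj-l2Hs}, however, contain two genuine gaps.

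First, the case $s=3$ is \emph{not} contained in \eqref{prop:eq:kdvadj-l1Hs}: that estimate costs $\norm{g}_{L^1(0, T; H_\Gamma^3(0, L))}$, whereas \eqref{prop:eq:kdvadj-l2Hs} with $s=3$ allows only $\norm{g}_{L^2(0, T; H_\Gamma^2(0, L))}$ --- one derivative less on $g$ --- and $L^2(0, T; H_\Gamma^2(0, L))$ does not embed into $L^1(0, T; H_\Gamma^3(0, L))$, so your reduction proves a strictly weaker statement. The paper gains this derivative by measuring the source of the equation for $w=\A^*\vp$ in $L^2(0, T; H^{-1}(0, L))$ and integrating by parts once: $\norm{\A^* g}_{H^{-1}}=\sup\{|\int_0^L p g_{xx}h_x\,dx| \,:\, \norm{h}_{H_0^1}\le 1\}\le C\norm{g}_{H_\Gamma^2}$, the interface contributions vanishing because $g(t)\in \D(\A^*)$ satisfies the transmission conditions. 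Second, for $s=1,2$ you invoke interpolation of the data-to-solution map, which is precisely the route that \cref{rk:interpolation} of the paper declares unavailable: the endpoint spaces are not the plain direct sums $H_\Gamma^s$ but the subspaces of $\D(\A^*)$-type cut out by boundary and transmission conditions, and no characterization of $[L^2(0, L), \D(\A^*)]_\theta$ is known; ``componentwise'' interpolation of the $H^s(I_k)$ simply ignores those constraints, which is the classical difficulty with interpolating subspaces defined by trace conditions. The paper avoids this with a direct device: it applies the semigroup to differentiated data, e.g.\ for $s=2$ it sets $v(t)=e^{t\A^*}(\sqrt{p}\,\partial_x^2\vp_T)+\int_0^t e^{(t-s)\A^*}(\sqrt{p}\,\partial_x^2 g)(s)\,ds$, identifies $v$ with $\sqrt{p}\,\partial_x^2\vp$ on each $(0, T)\times I_k$ (possible because $p$ is piecewise constant), and then applies the $s=0$ smoothing bound together with the duality estimate $\norm{\sqrt{p}\,\partial_x^2 g}_{H^{-1}}\le\norm{\sqrt{p}\,g_x}_{L^2}$; the case $s=1$ is the same with $\partial_x$ in place of $\sqrt{p}\,\partial_x^2$. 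To complete your proof you would need either this construction or a proven interpolation theorem for these interface-constrained spaces.
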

\begin{proof}
    The existence follows by classical semigroup theory as done in \cref{prop:wp-kdv-f-1}, see \cite[Chapter 4]{Pazy83}. We thus focus on obtaining estimates \eqref{prop:eq:kdvadj-l2Hs}.\\

    \noindent\emph{Step 1: case $s=0$.} This case is handled exactly as in \cref{prop:wp-kdv-f-1}, but instead, we choose the multiplier $q_{N-1}(x)=(x-L)/\sqrt{p_{N-1}}$ for $x\in I_{N-1}$ and $q_k(x)=(x-a_{k+1})/\sqrt{p_{k}}+q_{k+1}(a_{k+1}^+)$ for $x\in I_k$ for all $k\in\ran{0}{N-2}$, we obtain
    \begin{multline*}
        3\int_0^T\int_0^L \sqrt{p}|\vp_x|^2dxdt+\int_0^L |q\vp(0, x)|^2dx=\int_0^L |q||\vp_T(x)|^2dx\\+\int_0^T\int_0^L\frac{1}{\sqrt{p}}|\vp|^2dxdt+2\int_0^T\int_0^L |q|\vp gdxdt.
    \end{multline*}
    Thus, since $p$ is bounded from below, by using Poincaré's inequality, for any $\veps>0$ we have
    \begin{align*}
        \norm{\vp_x}_{L^2(0, T; L^2(0, L))}^2\leq C\norm{\vp_T}_{L^2(0, L)}^2+\veps\norm{\vp_x}_{L^2(0, T; L^2(0, L))}^2+C_\veps\norm{g}_{L^2(0,T; H^{-1})}^2.
    \end{align*}
    By choosing $\veps>0$ small enough, we readily get inequality \eqref{prop:eq:kdvadj-l2Hs} for $s=0$.\\

    \noindent\emph{Step 2: case $s=3$.} Let $\vp_T\in \D(\A^*)$ and $g\in L^1(0, T; \D(\A^*))$. By classical semigroup theory, then $\vp\in C([0, T], \D(\A^*))$. Hence, if we let $w=\A^*\vp$, it is a mild solution of \eqref{eq:kdvadj} with initial data $\A^*\vp_T$ and source term $\A^* g\in L^1(0, T; L^2(0, L))$. Furthermore, we can perform the same analysis as in the case $s=0$, which lead us to
    \begin{align}\label{ineq:s3ineq1}
        \norm{w}_{L^2(0, T; H^1(0, L))}\leq C\big(\norm{\A^* \vp_T}_{L^2(0, L)}^2+\norm{\A^* g}_{L^2(0, T; H^{-1})}^2\big).
    \end{align}
    To estimate the last term in the right-hand side above, we use $\A^*g=pg_{xxx}+g_x$ and
    \begin{align*}
        \norm{\A^* g}_{H^{-1}}=\sup_{\substack{h\in H_0^1(0, L)\\ \norm{h}_{H_0^1}\leq 1}}\left|\int_0^L \big(pg_{xxx}+g_x\big) h dx\right|.
    \end{align*}
    As in \cref{lem:Awelldef}, due to the transmission conditions we have $\A^*g\in L^2(0, L)$ and $\inn{\A^* g, h}_{L^2(0, L)}=-\inn{pg_{xx}+g, h_x}_{L^2(0, L)}$.
    By Cauchy-Schwarz's inequality, we get
    \begin{align}\label{ineq:s3ineq2}
        \norm{\A^* g}_{H^{-1}}=\sup_{\substack{h\in H_0^1(0, L)\\ \norm{h}_{H_0^1}\leq 1}}\left|\int_0^L \big(pg_{xx}+g\big) h_x dx\right|\leq \norm{pg_{xx}}_{L^2(0, L)}+\norm{g}_{L^2(0, L)}\leq C\norm{g}_{H_{\pw}^2(0, L)}.
    \end{align}
    As $p$ is bounded from below, by using the transmission conditions again, we have that
    \begin{align}\label{ineq:s3ineq3}
        \norm{\vp}_{L^2(0, T;H_{\pw}^4(0, L))}\lesssim \norm{w}_{L^2(0, T; H^1(0, L))}+\norm{\vp}_{L^2(0, T; H_{\pw}^2(0, L))},
    \end{align}
    and gathering inequalities \eqref{ineq:s3ineq1}-\eqref{ineq:s3ineq2}-\eqref{ineq:s3ineq3}, we have
    \begin{align*}
        \norm{\vp}_{L^2(0, T;H_{\pw}^4(0, L))}\lesssim \norm{\vp_T}_{H_{\pw}^3(0, L)}+\norm{g}_{L^2(0, T; H_{\pw}^2(0, L))},
    \end{align*}
    proving estimate \eqref{prop:eq:kdvadj-l2Hs} for $s=3$.\\

    \noindent\emph{Step 3: cases $s=1, 2$.} Let us denote by $\mathcal{S}$ the (linear) solution map that sends $(\vp_T, g)$ into $\vp$. From the previous steps, such map is continuous in the following spaces,
    \begin{align*}
    \begin{array}{cccc}
        \mathcal{S}: & L^2(0, L)\times L^2(0, T; H^{-1}(0, L)) & \longrightarrow & L^2(0, T; H_{\pw}^{1}(0, L)),\\
        \mathcal{S}: &\D(\A^*)\times L^2(0, T; \H_{tc,\mathrm{DDN}}^{2}(0, L)) & \longrightarrow & L^2(0, T; H_{\pw}^{4}(0, L)),
    \end{array}
    \end{align*}
    where $\H_{\tc,\mathrm{DDN}}^{2}(0,L)$ consists of functions in $H_{\pw}^2(0, L)$ encoding the transmission conditions up to order $1$, the Neumann at $0$, and Dirichlet boundary conditions. By linear interpolation, \cref{app:prop:interpolation-dom}-\cref{rk:interpolationBC} and \cref{app:lem:interpolation-source}, estimate \eqref{prop:eq:kdvadj-l2Hs} follows.
\end{proof}

We now establish a similar result for the adjont system to the linearized version of \eqref{eq:NLKdV-control} with a regular source term. These estimates are key for the proof of \cref{S4:prop:carleman-I}.

\begin{proposition}\label{prop:wp:adj-linearized}
    Let $T>0$ be given and assume $\by\in \X_T^0(0, L)$. Then for any $\vp_T\in L^2(0, L)$ and $g\in L^1(0, T; L^2(0, L))$, there exists a unique solution $\vp\in \X_T^0(0, L)$ of
    \begin{align*}
        \left\{
        \begin{array}{rll}
        -\vp_t-p(x)\vp_{xxx}-\vp_x-\by \vp_x = g, & ~(t, x)\in (0, T)\times (0, L),\\
        \vp(t, 0)=\vp(t, L)=\vp_{x}(t, 0)=0, & ~t\in (0, T),\\
        \vp(T, x) =  \vp_T(x), & ~x\in (0, L),
        \end{array}
        \right.
    \end{align*}
    coupled by the corresponding transmission conditions \eqref{eq:tc}. Additionally, if $\by\in \X_{\pw, T}^3(0, L)$, for $s\in \{0, 1, 2, 3\}$ there exists $C>0$ such that for any $\vp_T\in \D(\A^*)$ and $g\in L^2(0, T; \D(\A^*))$,
    \begin{align}\label{prop:eq:kdvadjlin-l2Hs}
        \norm{\vp}_{L^2(0, T; H_{\pw}^{s+1}(0, L))}\leq C\big(\norm{\vp_T}_{H_{\pw}^s(0, T)}+\norm{g}_{L^2(0, T; H_{\pw}^{s-1}(0, L))}\big)
    \end{align}
    \end{proposition}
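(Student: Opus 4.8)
The proof will run parallel to \cref{prop:wp-kdvadj-Xs} and \cref{prop:NLwp-Xs}, since the only new feature with respect to the adjoint system \eqref{eq:kdvadj} is the lower-order term $-\vp_x-\by\vp_x$. The term $-\vp_x$ is constant-coefficient and is already absorbed into the principal part (as for $\widetilde{\A}$ in \cref{prop:NLwp-Xs}); the genuinely new object is the variable-coefficient term $-\by\vp_x$, which I would treat as a perturbative source.

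For existence and uniqueness in $\X_T^0(0,L)$, I would reverse time to obtain a forward problem whose principal part generates the contraction semigroup of \cref{prop:semigroup}, and set up the affine map sending $\vp$ to the mild solution of the equation with source $g+\by\vp_x$. Uniqueness and the contraction property on a short interval $[T-\tau,T]$ follow from the estimate $\norm{\by\vp_x}_{L^1(L^2)}\le\norm{\by}_{L^2(L^\infty)}\norm{\vp_x}_{L^2(L^2)}$, where $\by$ is controlled through the embedding $H^1(0,L)\hookrightarrow L^\infty(0,L)$ and $\vp_x\in L^2(L^2)$ is furnished by the Kato smoothing of \cref{prop:wp-kdv-f-1}. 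Since $\norm{\by}_{L^2(T-\tau,T;L^\infty)}\to 0$ as $\tau\to0$, the map is a contraction for $\tau$ small, and iterating over a finite partition of $[0,T]$ (exactly as in \cref{prop:NLwp-Xs}) produces the unique solution on $[0,T]$.

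For the estimate \eqref{prop:eq:kdvadjlin-l2Hs} I would reproduce the scheme of \cref{prop:wp-kdvadj-Xs} and argue by induction on $s$. The base case $s=0$ is the same multiplier computation; the two extra terms produced by $\vp_x$ and $\by\vp_x$ are handled by integration by parts, the bound $\norm{\by}_{L^\infty}\lesssim\norm{\by}_{H^1}$, and Young's inequality, the resulting $\norm{\vp_x}_{L^2(L^2)}$ being absorbed into the left-hand side. For $s=1,2,3$ I would apply $\partial_x$, $\sqrt{p}\partial_x^2$ and $\A^*$ to the equation as before. Because $p$ is piecewise constant these operators commute with $p\partial_x^3$ on each $I_k$, and the transmission conditions \eqref{eq:tc-dom} ensure that the integrations by parts across $\Gamma$ leave no uncontrolled jump terms; the auxiliary quantities $\partial_x\vp$, $\sqrt{p}\partial_x^2\vp$, $\A^*\vp$ then solve the adjoint equation with a source consisting of the terms already present in \cref{prop:wp-kdvadj-Xs} together with commutator contributions of the form $\partial_x^\ell\by\,\partial_x^m\vp$ with $m\le s$. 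Using $\by\in\X_{\Gamma, T}^3$ (so that $\by,\by_x,\by_{xx}$ are bounded and $\by_{xxx}\in L^2$) and the product rules in the piecewise Sobolev scale, each commutator is bounded by $\norm{\vp}_{L^2(0,T;H_\Gamma^s)}$ up to a constant depending on $\norm{\by}_{\X_{\Gamma, T}^3}$.

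The estimate then closes by induction: every term on the right-hand side involves at most $\norm{\vp}_{L^2(0,T;H_\Gamma^s)}$, which is one derivative below the left-hand side and is already controlled by the previous step, so no absorption of the top-order norm is needed. The step I expect to be the main obstacle is the top-order commutator control for $s=2,3$, where $\by$ multiplies $\vp_{xxx}$ and $\vp_{xxxx}$: there one must use the matching of traces in \eqref{eq:tc-dom} carefully, both to justify the integrations by parts across each interface point and to verify that the perturbed source still belongs to the correct piecewise space with the right transmission conditions. This is precisely where the full regularity $\by\in\X_{\Gamma, T}^3$ is exploited.
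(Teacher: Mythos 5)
Your proposal is correct and follows essentially the same route as the paper: the paper's own proof is a three-line sketch that splits $\vp=\vp^1+\vp^2$, treats the potential-free part via \cref{prop:wp-kdvadj-Xs}, handles the term $\by\vp_x$ by a fixed-point argument modeled on \cref{prop:NLwp-Xs}, and states that the estimates \eqref{prop:eq:kdvadjlin-l2Hs} follow from \cref{prop:wp-kdvadj-Xs}. Your direct perturbative treatment (without the splitting) together with the commutator/induction analysis for $s=1,2,3$ is a faithful expansion of exactly the details the paper omits.
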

\begin{proof}
    We split $\vp=\vp^1+\vp^2$ where $\vp^1$ solves the system with data $(\vp_T, g)$, and $\vp^2$ solves the system with potential $\by$, initial data $\vp^2(T)=0$ and source $\overline{y}\vp_x^1$. To treat $\vp^1$ we use \cref{prop:wp-kdvadj-Xs} and to treat $\vp^2$, we use a fixed point argument following the same steps of the proof of \cref{prop:NLwp-Xs}. The estimates follow from \cref{prop:wp-kdvadj-Xs} and $\overline{y}\in\mc{X}_{\pw, T}^3(0, L)$ to handle the estimates on the potential. We omit the details.
\end{proof}

\begin{remark}\label{S2:rk:byX3}
    Note that the statement of \cref{prop:wp:adj-linearized} is not empty, as shown by \cref{prop:NLwp-Xs}, and that the constants depend on the norm of $\overline{y}$, be it on $\X_T^0$ or $\X_{\pw, T}^3$.
\end{remark}


\section{A global Carleman estimate}\label{S3}

We first introduce a weight function with internal observation. Let $j\in\ran{0}{N-1}$ be fixed in the sequel and let $\omega_0\Subset I_j$.
\begin{lemma}\label{S3:lem:weightObs}
There exists a continuous function $\beta\in C([0, L])$ such that $\beta|_{\overline{I_k}}\in C^3(\overline{I_k})$ for $k\in\ran{0}{N-1}$, satisfying the following properties:
\begin{enumerate}
    \item\label{lem:wobs1} for some $r>0$, it holds that
    \begin{align*}
    \min_{x\in [0, L]}\beta\geq r\ \text{ and }\ 
        \beta'\neq 0 & \text{ in }\overline{I_j}\setminus \omega_0
    \end{align*}
    and depending on the value of $j$:
    \begin{enumerate}
        \item\label{lem:wobs1a} if $j\not\in \{0, N-1\}$ then
        \begin{align*}
        \left\{\begin{array}{rl}
            \beta'\geq r>0 & \text{ in } \overline{I_k} \text{ for }k\in\ran{0}{j-1},\\
            \beta'\leq -r<0 & \text{ in } \overline{I_k} \text{ for }k\in\ran{j+1}{N-1};
        \end{array}\right.
        \end{align*}
        \item\label{lem:wobs1b} if $j=0$ then
        \begin{align*}
            \beta'\leq -r<0 & \text{ in } \overline{I_k} \text{ for }k\in\ran{1}{N-1}, \text{ and }\ \beta'(0)>0;
        \end{align*}
        \item\label{lem:wobs1c} if $j=N-1$ then
        \begin{align*}
            \beta'\geq r>0 & \text{ in } \overline{I_k} \text{ for }k\in\ran{0}{N-2}, \text{ and }\ \beta'(L)<0;
        \end{align*}
    \end{enumerate}
    \item for some $\kappa\in (1, 2)$ it holds that
    \begin{align}\label{S3:eq:weightkbound}
    \kappa\max_{x\in \overline{I_k}}\beta<2\min_{x\in \overline{I_k}}\beta,\ k\in\ran{0}{N-1};
    \end{align}
    \item at the interface the following transmission conditions hold:
    \begin{align}\label{S3:eq:tcweight}
        \left\{\begin{array}{rrlll}
            \beta(a_k^-)&=&\beta(a_k^+), & ~k\in\ran{1}{N-1},\\
            \sqrt{p_{k-1}}\beta'(a_k^-)&=&\sqrt{p_k}\beta'(a_k^+), & ~k\in\ran{1}{N-1},\\
            p_{k-1}\beta''(a_k^-)&=&p_k\beta''(a_k^+), & ~k\in\ran{1}{N-1}.
            \end{array}\right.
    \end{align}
\end{enumerate}
\end{lemma}

\begin{proof}
We will show the existence of such a weight function $\beta$ by explicitly constructing a piecewise affine function outside the observation interval, with a quartic polynomial on $I_j$, satisfying all the desired properties. Although this is enough to obtain our Carleman estimate, we point out that a more general $\beta$ could be constructed, see for instance \cite[Lemma 1.1, Lemma 2.1]{BDLR07}.
\medskip

\paragraph{\emph{Step 1. Piecewise affine on the left.}} Let us first assume that $j\neq 0$, that is, the observation zone is not located on $I_0$. Take $m_0>0$ and define $\{m_1,\ldots, m_{j-1}\}$ inductively by,
\begin{align*}
    \sqrt{p_k}m_k=\sqrt{p_{k-1}}m_{k-1},\ k\in\ran{1}{j-1}.
\end{align*}
Let us define on $I_k$ for $k\in\ran{0}{j-1}$,
\begin{align*}
    \beta_k(x)=m_k(x-a_k)+c_k,    
\end{align*}
with $m_k>0$ as above and the $c_k$'s are chosen by continuity: take $c_0>0$ and
\begin{align*}
    c_k:=\beta_{k-1}(a_k^-)=m_{k-1}(a_k-a_{k-1})+c_{k-1},\ k\in\ran{1}{j-1}.
\end{align*}
Furthermore, the third transmission condition is automatically satisfied since $\beta_k''=0$ on each $I_k$, and $\beta_k'=m_k>0$ on $I_k$ for $k\in\ran{0}{j-1}$.
\medskip

\paragraph{\emph{Step 2. Quartic polynomial on the observation zone.}} Since $\beta$ is being constructed to be affine outside $I_j$, the third transmission condition forces $\beta_j''(a_j^+)=\beta_j''(a_j^-)=0$. Hence we impose $\beta_j''(x)=m_j(x-a_j)(x-a_{j+1})$, for some $m_j\in \R$ to be determined. We thus have
\begin{align}\label{S3:eq:bj}
    \beta_j'(x)=m_j\mathfrak{I}_j(x)+n_j,
\end{align}
where $\mathfrak{I}_j(x):=\int_{a_j}^x (t-a_j)(t-a_{j+1})dt$ and $n_j:=\tfrac{\sqrt{p_{j-1}}m_{j-1}}{\sqrt{p_j}}>0$ is prescribed by the transmission condition on the left. Let us consider $(\ell_0, \ell_1)\Subset\omega_0$ and choose $m_j$ satisfying
\begin{align*}
    \dfrac{n_j}{|\mathfrak{I}_j(\ell_1)|}<m_j<\dfrac{n_j}{|\mathfrak{I}_j(\ell_0)|},
\end{align*}
so that $\beta_j'(\ell_0)>0$ and $\beta_j'(\ell_1)<0$. As $\mathfrak{I}_j'(x)=(x-a_j)(x-a_{j+1})<0$ for $x\in I_j$, $\beta'_j$ is strictly decreasing on $I_j$, by the intermediate value theorem, the unique zero of $\beta_j'$ must lie inside of $(\ell_0, \ell_1)$, hence $\beta_j'\neq 0$ in $\overline{I_j}\setminus\omega_0$. We then define
\begin{align*}
    \beta_j(x):=m_j\int_{a_j}^x\mathfrak{I}_j(s)ds+n_j(x-a_j)+\beta_{j-1}(a_j^-).
\end{align*}
From the lower bound on $m_j$, that $\mathfrak{I}_j$ is strictly decreasing on $I_j$, and that $|\mathfrak{I}_j(\ell_1)|<|\mathfrak{I}_j(a_{j+1})|$, we have $\beta_j'(a_{j+1}^-)<0$. If $j=N-1$, the construction ends here and it ensures that $\beta'(L)<0$. Otherwise, we continue by constructing a piecewise affine function on the right-side of the observation zone.
\medskip

\paragraph{\emph{Step 3. Piecewise affine on the right.}} For $j\neq N-1$, we set $m_{j+1}:=\tfrac{\sqrt{p_j}}{\sqrt{p_{j+1}}}\beta_j'(a_{j+1}^-)<0$ and we define inductively $m_{k+1}=\tfrac{\sqrt{p_{k}}}{\sqrt{p_{k+1}}}m_k<0$ for $k\in\ran{j+1}{N-2}$. For $k\in\ran{j+1}{N-1}$ we define
\begin{align*}
    \beta_k(x)=m_k(x-a_k)+c_k,
\end{align*}
with $c_{j+1}=\beta_j(a_{j+1}^-)$ and the remaining $c_k$'s chosen by continuity as in \emph{Step 1}. Note that $\beta_k'=m_k<0$ on $I_k$ for each $k\in\ran{j+1}{N-1}$.

We then set $\beta$ to be defined on each $I_k$ by $\beta_{|_{I_k}}:=\beta_k$ for $k\in\ran{0}{N-1}$ with the obvious traces defined by continuity. By construction, the transmission conditions \eqref{S3:eq:tcweight} are clearly satisfied by $\beta$.
\medskip

\paragraph{\emph{Step 4. Final bounds.}} Let us set $\widetilde{\beta}=\beta+K$ with $K>0$ to be chosen below. Observe that the transmission conditions and the sign of the derivatives remain unchanged under this shift.

Let $C:=\max_{x\in [0, L]}\beta$ and $c:=\min_{x\in[0, L]}\beta$. Note that for each $k\in\ran{0}{N-1}$ it holds
\begin{align*}
    \min_{\overline{I_k}}\widetilde{\beta}\geq K+c\ \text{ and }\ \max_{\overline{I_k}}\widetilde{\beta}\leq K+C.
\end{align*}
A sufficient condition for \eqref{S3:eq:weightkbound} to hold is to choose $K$ large enough so that $K+c>\frac{\kappa}{2-\kappa}(C-c).$
By setting
\begin{align*}
    r:=\left\{\begin{array}{cc}
    \min\Big\{K+c,\displaystyle \min_{k\leq j-1} m_k, \min_{j+1\leq k\leq N-1} (-m_k)\Big\}, & j\neq N-1,\\
    \min\Big\{K+c, \displaystyle\min_{k\leq N-2} m_k\Big\}, & j=N-1,
    \end{array}\right.
\end{align*}
we obtain the desired bounds. 
\medskip

\paragraph{\emph{Step 5. Right endpoint case.}} If $j=0$, we perform a similar construction starting off from $I_0$ as in \emph{Step 2} ensuring $\beta'(0)>0$ and $\beta'\neq 0$ on $\overline{I_0}\setminus\omega_0$, and then as in \emph{Step 3} constructing piecewise linear functions with negative slope. The proof finishes by relabeling $\widetilde{\beta}$ by $\beta$.
\end{proof}

\begin{remark}
    Since we are deriving a two-parameter Carleman estimate, a condition on the second derivative of the weight function is not required, unlike in the one-parameter case. However, we retain the second-order transmission condition, as it simplifies some computations later on.
\end{remark}

Let us set $Q'=(0, T)\times ((0, L)\setminus\Gamma)$. The weight functions \eqref{S1:intro:weights} satisfy the following identities
\begin{align*}
\begin{array}{lll}
    \partial_x\eta=-\ld\beta'\xi,\hspace{0.8cm} & \partial_x\xi=\ld\beta'\xi,\hspace{0.8cm} & \text{ in } Q',\\
    \partial_t\eta=\dfrac{2t-T}{t(T-t)}\eta, &  \partial_t\xi=\dfrac{2t-T}{t(T-t)}\xi, & \text{ in } Q.
    \end{array}
\end{align*}
We have that for each $k\in \N$, there exists $C>0$ independent of $\ld>0$ such that
\begin{align}\label{S3:ineq:weightx}
    |\partial_x^k\eta(t, x)|+|\partial_x^k\xi(t, x)|\leq C(\ld^k+1)\xi(t, x),\ (t, x)\in Q'.
\end{align}
Furthermore, due to the properties of $\beta$, there exists a constant $C=C(T)>0$ such that
\begin{align}\label{S3:ineq:weightt}
    C^{-1}\leq \xi(t, x)\ \text{   and   }\ |\partial_t\eta(t, x)|+|\partial_t\xi(t, x)|\leq C\xi^2(t, x),\ (t, x)\in Q.
\end{align}
These estimates will give us the heuristics to identify the dominating and lower order terms coming from the integration by parts later on.

\subsection{Proof of \cref{thm:carleman_omega}} Let $s>0$ and define $\mc{V}_s=\{e^{-s\eta}u : u\in\mc{V}\}$. For $u\in\mc{V}$ set $w=e^{-s\eta}u$ and introduce the conjugate operator
\begin{align*}
    \mc{L}_\eta w=e^{-s\eta}\mc{L}(e^{s\eta}w)=(\mc{L}_1+\mc{L}_2+\mc{R})w
\end{align*}
where
\begin{align*}
    \mc{L}_1w&=w_t+3ps^2\eta_x^2w_x+pw_{xxx}+3pms^2\eta_x\eta_{xx}w,\\
    \mc{L}_2w&=ps^3\eta_x^3w+3ps\eta_xw_{xx}+3sw_x(p\eta_x)_x,
\end{align*}
and
\begin{align*}
    \mc{R}w=bs\eta_x w+bw_x+ps\eta_{xxx}w+3ps^2\eta_x\eta_{xx}w+dw+s\eta_tw-3sp_x\eta_xw_x-3pms^2\eta_x\eta_{xx}w,
\end{align*}
for some constant $m>0$, to be chosen later. By \cref{S3:lem:weightObs}, since $\beta$ satisfies the transmission conditions, the conjugate function satisfies them as well:
\begin{align}\label{eq:TCconjugate}
    \left\{\begin{array}{rrlll}
    w(t, a_k^-)&=&w(t, a_k^+), & ~t\in (0, T),~k\in\ran{1}{N-1},\\
    \sqrt{p_{k-1}}w_x(t, a_k^-)&=&\sqrt{p_k}w_x(t, a_k^+), & ~t\in (0, T),~k\in\ran{1}{N-1},\\
    p_{k-1}w_{xx}(t, a_k^-)&=&p_kw_{xx}(t, a_k^+), & ~t\in (0, T),~k\in\ran{1}{N-1}.
    \end{array}\right.
\end{align}
Taking the $L^2-$norm to $\mc{L}_1w+\mc{L}_2w=\mc{L}_\eta w-\mc{R}w$ we obtain
\begin{align*}
    \norm{\mc{L}_1w}_{L^2(Q)}^2+\norm{\mc{L}_2w}_{L^2(Q)}^2+2\inn{\mc{L}_1 w,\mc{L}_2 w}_{L^2(Q)}\leq 2\norm{\mc{L}_\eta w}_{L^2(Q)}^2+2\norm{\mc{R}w}_{L^2(Q)}^2.
\end{align*}
The introduction of a well chosen parameter $m>0$ for the last term in $\mc{L}_1$ originates from an idea of Fursikov and Imanuvilov \cite{FI96} in the parabolic case, and allows us obtain positivity for the crossed product term without relying on the square terms $\norm{\mc{L}_1w}_{L^2}^2$ and $\norm{\mc{L}_2w}_{L^2}^2$. We refer to Le Rousseau, Lebeau and Robbiano's book on Carleman estimates \cite[Section 3.7]{LeRLR22} for a thorough analysis of this term.
\subsubsection{Double product term} To fix notation, in what follows the symbol $|_0^L$ denotes the evaluation at the end points considering the interface, namely,  $\mu|_0^L:=\sum_{k=0}^{N-1}\mu|_{a_k}^{a_{k+1}}$. Henceforth, we will write
\begin{align*}
    \mu\big|_0^L=\mu(L)-\mu(0)-\sum_{a\in \Gamma} [\mu]_a. 
\end{align*}
Denote by $I_{ij}$ for $i\in\ran{1}{4}$, $j\in\ran{1}{3}$ the $ij-$term of the $L^2-$product $\inn{\mc{L}_1 w,\mc{L}_2 w}_{L^2(Q)}$. We have that $w(0, t)=w(L, t)=0$ for all $t\in (0, T)$ and $w(x, 0)=w(x, T)=0$ for all $x\in (0, L)$. In what follows, for each term $I_{ij}$ we will perform several integration by parts and we will write once explicitly all the terms. Then we will gather them into two groups: the distributed and boundary-interface terms. Within each group we will split the terms with respect to the powers of $s$, $\ld$ and $\xi$ into dominating and lower-order terms: the former will produce the weighted norms we are looking for and the latter will be absorbed for large $s$ and $\ld$. We perform the integration by parts below:
\begin{align*}
    I_{11}=\iint_Q ps^3\eta_x^3 ww_t dxdt=\dfrac{3}{2}s^3\ld^3\iint_Q p\beta_x^3\xi^2\xi_t|w|^2dxdt,
\end{align*}
\begin{align*}
    I_{12}&=3s\iint_Q p\eta_x w_{xx}w_tdxdt\\
    &=-I_{13}-\dfrac{3}{2}s\ld\iint_Q p\beta_x\xi_{t} |w_{x}|^2dxdt-3s\ld\left.\int_0^T p\beta_x\xi w_tw_xdt\right|_0^L,
\end{align*}
\begin{align*}
    I_{21}&=3s^5\iint_Q p^2\eta_x^5ww_{x}dxdt\\
    &=\dfrac{3}{2}s^5\ld^5\iint_Q (p^2\beta_x^5)_x\xi^5|w|^2dxdt+\dfrac{15}{2}s^5\ld^6\iint_Q p^2 \beta_x^6\xi^5|w|^2dxdt-\left.\dfrac{3}{2}s^5\ld^5\int_0^T p^2\beta_x^5\xi^5|w|^2dt\right|_0^L,
\end{align*}
\begin{align*}
    I_{22}&=9s^3\iint_Q p^2\eta_x^3 w_xw_{xx}dxdt\\
    &=\dfrac{9}{2}s^3\ld^3\iint_Q (p^2\beta_x^3)_x\xi^3 |w_x|^2dxdt+\dfrac{27}{2}s^3\ld^4\iint_Q p^2 \beta_x^4\xi^3|w_x|^2dxdt-\dfrac{9}{2}s^3\ld^3\left.\int_0^T p^2\beta_x^3\xi^3|w_x|^2dt\right|_0^L,\\
\end{align*}
\begin{align*}
    I_{23}&=9\iint_Q ps^3\eta_x^2(p\eta_{x})_x|w_x|^2dxdt\\
    &=-9s^3\ld^3\iint_Q p \beta_x^2(p\beta_{x})_x\xi^3|w_x|^2dxdt-9s^3\ld^4\iint_Q p^2 \beta_x^4\xi^3|w_x|^2dxdt,
\end{align*}
\begin{align*}
    I_{31}&=s^3\iint_Q p^2\eta_x^3w w_{xxx}dxdt\\
    &=\begin{multlined}[t][15.4cm]\dfrac{s^3\ld^3}{2}\iint_Q (p^2\beta_x^3\xi^3)_{xxx} |w|^2dxdt-\dfrac{3}{2}s^3\ld^3\iint_Q (p^2\beta_x^3)_x\xi^3|w_x|^2dxdt-\dfrac{9}{2}s^3\ld^4\iint_Q p^2\beta_x^4\xi^3|w_x|^2dxdt\\-\left.\dfrac{s^3\ld^3}{2}\int_0^T (p^2\beta_x^3\xi^3)_{xx}|w|^2dt\right|_0^L+\left.s^3\ld^3\int_0^T (p^2\beta_x^3\xi^3)_xww_xdt\right|_0^L+\left.\dfrac{s^3\ld^3}{2}\int_0^T p^2\beta_x^3\xi^3|w_x|^2dt\right|_0^L\\-\left.s^3\ld^3\int_0^T p^2\beta_x^3\xi^3ww_{xx}dt\right|_0^L,\end{multlined}
\end{align*}
\begin{align*}
    I_{32}&=3s\iint_Q p^2\eta_x w_{xx}w_{xxx}dxdt\\
    &=\dfrac{3}{2}s\ld\iint_Q (p^2\beta_{x})_x\xi |w_{xx}|^2dxdt+\dfrac{3}{2}s\ld^2\iint_Q p^2 \beta_{x}^2\xi |w_{xx}|^2dxdt-\left.\dfrac{3}{2}s\ld\int_0^T p^2 \beta_x\xi |w_{xx}|^2dt\right|_0^L,
\end{align*}
\begin{align*}
    I_{33}&=3s\iint_Q p(p\eta_{x})_xw_xw_{xxx}dxdt\\
    &=\begin{multlined}[t][15cm]-\dfrac{3}{2}s\ld\iint_Q (p(p\beta_{x}\xi)_x)_{xx}|w_x|^2+3s\ld\iint_Q p(p\beta_{x})_x\xi|w_{xx}|^2dxdt+3s\ld^2\iint_Q p^2\beta_{x}^2\xi|w_{xx}|^2dxdt\\+\left.\dfrac{3}{2}s\ld\int_0^T (p(p\beta_{x}\xi)_x)_{x}|w_x|^2dt\right|_0^L-\left.3s\ld\int_0^T p(p\beta_{x}\xi)_x w_xw_{xx}dt\right|_0^L,\end{multlined}
\end{align*}
\begin{align*}
    I_{41}&=3ms^5\iint_Q p^2\eta_x^4\eta_{xx}|w|^2dxdt\\
    &=-3ms^5\ld^5\iint_Q p^2\beta_x^4\beta_{xx}\xi^5|w|^2dxdt-3ms^5\ld^6\iint_Q p^2\beta_x^6\xi^6|w|^2dxdt,
\end{align*}
\begin{align*}
    I_{42}&=9ms^3\iint_Q p^2\eta_x^2\eta_{xx}ww_{xx}dxdt\\
    &=\begin{multlined}[t][15cm]-\dfrac{9}{2}ms^3\ld^3\iint_Q (p^2\beta_x^2\beta_{xx}\xi^3)_{xx}|w|^2dxdt-\dfrac{9}{2}ms^3\ld^4\iint_Q (p^2\beta_x^4\xi^3)_{xx}|w|^2dxdt\\+9ms^3\ld^3\iint_Q p^2 \beta_x^2\beta_{xx}\xi^3|w_x|^2dxdt+9ms^3\ld^4\iint_Q p^2 \beta_x^4\xi^3|w_x|^2dxdt\\+\left.\dfrac{9}{2}ms^3\ld^3\int_0^T(p^2\beta_x^2\beta_{xx}\xi^3)_x|w|^2dt\right|_0^L+\left.\dfrac{9}{2}ms^3\ld^4\int_0^T(p^2\beta_x^2\xi^3)_x|w|^2dt\right|_0^L
    \\-\left.9ms^3\ld^3\int_0^T p^2\beta_x^2\beta_{xx}\xi^3ww_xdt\right|_0^L-\left.9ms^3\ld^4\int_0^T p^2\beta_x^4\xi^3ww_xdt\right|_0^L,\end{multlined}
\end{align*}
\begin{align*}
    I_{43}&=9ms^3\iint_Q p(p\eta_x)_x\eta_x\eta_{xx} ww_{x}dxdt\\
    &=\begin{multlined}[t][15cm]\dfrac{9}{2}ms^3\ld^3\iint_Q\big(p(p\beta_x\xi)_x\beta_x\beta_{xx}\xi^2\big)_x|w|^2dxdt+\dfrac{9}{2}ms^3\ld^4\iint_Q\big(p(p\beta_x\xi)_x\beta_x^3\xi^2\big)_x|w|^2dxdt+\\
    -\left.\dfrac{9}{2}ms^3\ld^3\int_0^Tp(p\beta_x\xi)_x\beta_x\beta_{xx}\xi^2|w|^2dt\right|_0^L-\left.\dfrac{9}{2}ms^3\ld^4\int_0^Tp(p\beta_x\xi)_x\beta_x^3\xi^2|w|^2dt\right|_0^L.\end{multlined}
\end{align*}

\subsubsection{Gathering terms} We split the double product terms as follows
\begin{multline*}
    \inn{\mc{L}_1w, \mc{L}_2w}_{L^2(Q)}=\left(\dfrac{15}{2}-3m\right)s^5\ld^6\iint_Q p^2\beta_x^6\xi^5|w|^2dxdt\\+9ms^3\ld^4\iint_Q p^2\beta_x^4\xi^3|w_x|^2dxdt+\dfrac{9}{2}s\ld^2\iint_Q p^2\beta_x^2\xi|w_{xx}|^2dxdt+\kD^{low}+\kB,
\end{multline*}
where $\kD^{low}$ and $\kB$ gather the lower order distributed and boundary-interface terms, respectively. For the boundary-interface terms, without taking into account any of the boundary conditions nor the transmission conditions, we have
\begin{multline*}
    \kB=-\left.\dfrac{3}{2}s^5\ld^5\int_0^T p^2\beta_x^5\xi^5|w|^2dt\right|_0^L+\left.\dfrac{9}{2}ms^3\ld^4\int_0^T(p^2\beta_x^2\xi^3)_x|w|^2dt\right|_0^L\\-\left.\dfrac{9}{2}ms^3\ld^4\int_0^Tp(p\beta_x\xi)_x\beta_x^3\xi^3|w|^2dt\right|_0^L-\left.\dfrac{s^3\ld^3}{2}\int_0^T (p^2\beta_x^2\xi^3)_{xx}|w|^2dt\right|_0^L\\+\left.\dfrac{9}{2}ms^3\ld^3\int_0^T(p^2\beta_x^2\beta_{xx}\xi^3)_x|w|^2dt\right|_0^L-\left.\dfrac{9}{2}ms^3\ld^3\int_0^Tp(p\beta_x\xi)_x\beta_x\beta_{xx}\xi^2|w|^2dt\right|_0^L\\
    -4s^3\ld^3\left.\int_0^T p^2\beta_x^3\xi^3|w_x|^2dt\right|_0^L+\left.\dfrac{3}{2}s\ld\int_0^T [p(p\beta_{x}\xi)_x]_{x}|w_x|^2dt\right|_0^L
    -\left.\dfrac{3}{2}s\ld\int_0^T p^2 \beta_x\xi |w_{xx}|^2dt\right|_0^L\\-\left.3s\ld\int_0^T p(p\beta_{x}\xi)_x w_xw_{xx}dt\right|_0^L-\left.s^3\ld^3\int_0^T p^2\beta_x^3\xi^3ww_{xx}dt\right|_0^L
    +\left.s^3\ld^3\int_0^T (p^2\beta_x^3\xi^3)_xww_xdt\right|_0^L\\-\left.9ms^3\ld^3\int_0^T p^2\beta_x^2\beta_{xx}\xi^3ww_xdt\right|_0^L-\left.9ms^3\ld^4\int_0^T p^2\beta_x^4\xi^3ww_xdt\right|_0^L-3s\ld\left.\int_0^T p\beta_x\xi w_tw_xdt\right|_0^L.
\end{multline*}
In view of the boundary and the transmission conditions, we split $\kB$ as follows
\begin{align*}
    \kB=\kB_L+\kB_0+\kB^*+\kB_\Gamma,
\end{align*}
where each one of these terms is described below. To slightly simplify the notation below and keep better track of the dominant powers, let us define $\wxi:=s\ld\xi$. First of all, $\kB_L$ and  $\kB_0$ correspond to those terms at $x=L$ and $0$, respectively, that have fixed sign
\begin{gather*}
    \kB_L=-4\int_0^T \big(p^2\beta_x^3\wxi^3|w_x|^2\big)\big|_{x=L}dt-\dfrac{3}{2}\int_0^T \big(p^2 \beta_x\wxi|w_{xx}|^2\big)\big|_{x=L}dt,\\
    \kB_0=4\int_0^T \big(p^2\beta_x^3\wxi^3|w_x|^2\big)\big|_{x=0}dt+\dfrac{3}{2}\int_0^T \big(p^2\beta_x\wxi |w_{xx}|^2\big)\big|_{x=0}dt,
\end{gather*}
and $\kB^*$ corresponds to the terms without fixed sign at the boundary
\begin{multline*}
    \kB^*=\dfrac{3}{2}s\ld\int_0^T \big((p(p\beta_{x}\xi)_x)_x|w_x|^2\big)\big|_{x=L}dt-\dfrac{3}{2}s\ld\int_0^T \big((p(p\beta_{x}\xi)_x)_x|w_x|^2\big)\big|_{x=0}dt\\+3s\ld\int_0^T  \big(p(p\beta_{x}\xi)_xw_xw_{xx})\big|_{x=0}dt-3s\ld\int_0^T  (p(p\beta_{x}\xi)_xw_xw_{xx})\big|_{x=L}dt.
\end{multline*}
In the same spirit as before, we split the terms at the interface as follows
\begin{align*}
    \kB_\Gamma=\kB_\Gamma^{dom}+\kB_\Gamma^{low}=\sum_{a\in \Gamma}\big(\kB_\Gamma^{dom}(a)+\kB_\Gamma^{low}(a)\big),
\end{align*}
where
\begin{multline*}
    \kB_\Gamma^{dom}(a)=\dfrac{3}{2}\int_0^T [p^2\beta_x^5\wxi^5|w|^2]_{a}dt+4\int_0^T [p^2\beta_x^3\wxi^3|w_x|^2]_adt\\+\dfrac{3}{2}\int_0^T[p^2\beta_x\wxi|w_{xx}|^2]_{a}dt+\int_0^T [p^2\beta_x^3\wxi^3 ww_{xx}]_adt+3\int_0^T [p\beta_x\wxi w_tw_x]_adt,
\end{multline*}
and $\kB_\Gamma^{low}$ gathers the remaining terms at the interface, which will be shown to be of lower order with respect to $\kB_\Gamma^{dom}$.

\subsubsection{Estimates for the distributed terms} Henceforth the generic constant depends on $L$, $T$, $\rho_0$, $\rho_1$, $\norm{\beta}_{C^3}$, $r$, $s_0$ and $\ld_0$, where $s_0$ and $\ld_0$ will be chosen later. Let us introduce
\begin{gather*}
    \norm{w}_{s,\ld, \xi}^2:=\iint_{Q}\Big(s^5\ld^6\xi^5|w|^2+s^3\ld^4\xi^3|w_x|^2+s\ld^2\xi|w_{xx}|^2\Big)dxdt,\\
    \obs^2:=\iint_{(0, T)\times\omega_0}\Big(s^5\ld^6\xi^5|w|^2+s^3\ld^4\xi^3|w_x|^2+s\ld^2\xi|w_{xx}|^2\Big)dxdt,
\end{gather*}
which stand as the weighted norm in the interior and the weighted observation from $(0, T)\times\omega_0$, respectively.
Let us fix $m\in(0, 5/2)$. Using \cref{S3:lem:weightObs}-\cref{lem:wobs1}, the dominating terms are bounded from below as follows
\begin{multline*}
    \left(\dfrac{15}{2}-3m\right)s^5\ld^6\iint_Q p^2\beta_x^6\xi^5|w|^2dxdt+9ms^3\ld^4\iint_Q p^2\beta_x^4\xi^3|w_x|^2dxdt\\+\dfrac{9}{2}s\ld^2\iint_Q p^2\beta_x^2\xi|w_{xx}|^2dxdt
    \gtrsim \norm{w}_{s, \ld, \xi}^2-\obs^2,
\end{multline*}
whilst for $\kD^{low}$, we additionally use \eqref{S3:ineq:weightx} and Young's inequality (to distribute powers of $s$ and $\ld$ according to the number of derivatives of $w$), to obtain
\begin{align*}
    |\kD^{low}|\lesssim\left(\dfrac{1}{s}+\dfrac{1}{s^2}+\dfrac{1}{\ld}\right)\norm{w}_{s,\ld, \xi}^2.
\end{align*}
Therefore, by choosing $s_0$ and $\ld_0$ large enough, we get
\begin{align}\label{S3:ineq:L2prod1}
    \norm{w}_{s,\ld, \xi}^2+\kB\lesssim \inn{\mc{L}_1 w,\mc{L}_2 w}_{L^2(Q)}+\obs^2,
\end{align}
for all $s\geq s_0$ and $\ld\geq \ld_0$. Additionally, for the definition of $\mc{R}$, we observes that the highest powers are $s^4\ld^6$ for the zero order term and $s^2\ld^2$ for the first order term. From the regularity assumptions of $p$, $b$ and $d$ we obtain we observe that for the residue term one has the estimate
\begin{align*}
    \norm{\mc{R} w}_{L^2(Q)}^2\lesssim s^4\ld^6\iint_Q \xi^4|w|^2dxdt+s^2\ld^2\iint_Q \xi^2|w_x|^2dxdt\lesssim \left(\dfrac{1}{s}+\dfrac{1}{s\ld^2}\right)\norm{w}_{s, \ld, \xi}^2.
\end{align*}

\subsubsection{Estimates for the boundary terms} From the properties of $\beta$ given by \cref{S3:lem:weightObs}-\cref{lem:wobs1}, no matter where $\omega_0$ is located, we have that $\beta_x(0)$ and $-\beta_x(L)$ are both positively bounded from below, from which follows
\begin{align*}
    \sum_{x_b\in\{0, L\}}\int_0^T \big(\wxi^3|w_x|^2+\wxi|w_{xx}|^2\big)\big|_{x=x_b}dt\lesssim \kB_0+\kB_L.
\end{align*}
To treat the terms contained in $\kB^*$, as before, using inequality \eqref{S3:ineq:weightx}, we get
\begin{align*}
        \left|\dfrac{3}{2}s\ld\int_0^T \big((p(p\beta_{x}\xi)_x)_{x}|w_x|^2\big)\big|_{x=L}dt\right|\lesssim \dfrac{1}{s^2}\int_0^T \big(\wxi^3|w_x|^2\big)\big|_{x=L}dt,
\end{align*}
and using Young's inequality (to distribute powers of $s$ and $\ld$ accordingly),
\begin{align*}
    \left|3s\ld\int_0^T \big(p^2(\beta_{x}\xi)_xw_xw_{xx}\big)\big|_{x=L}dt\right|\lesssim \dfrac{1}{s}
    \int_0^T \big(|\wxi^3|w_x|^2+\wxi|w_{xx}|^2\big)\big|_{x=L}dt.
\end{align*}
The analogous bounds hold for the terms evaluated at $x=0$. We readily get for $s\geq s_0$,
\begin{align*}
    |\kB^*|\lesssim \dfrac{1}{s}\sum_{x_b\in\{0, L\}}\int_0^T \big(\wxi^3|w_x|^2+\wxi|w_{xx}|^2\big)\big|_{x=x_b}dt.
\end{align*}
Then taking $s_0$ large enough, from \eqref{S3:ineq:L2prod1}, for any $s\geq s_0$ and $\ld\geq \ld_0$ it holds
\begin{align*}
    \norm{w}_{s,\ld, \xi}^2+\sum_{x_b\in\{0, L\}}\int_0^T \big(\wxi^3|w_x|^2+\wxi|w_{xx}|^2\big)\big|_{x=x_b}dt+\kB_\Gamma\lesssim \inn{\mc{L}_1 w,\mc{L}_2 w}_{L^2(Q)}+\obs^2.
\end{align*}

\subsubsection{Treatment of the terms at the interface} Recalling that $\wxi:=s\ld\xi$, we introduce the weighted norm at the interface
\begin{align*}
    |w|_{\Gamma, s, \ld, \xi}^2=\sum_{a\in \Gamma}\int_0^T\left(\wxi^5|w|^2+\wxi^3|\sqrt{p}w_x|^2+\wxi|pw_{xx}|^2\right)(t, a)dt.
\end{align*}
First, since $\beta$ and $w$ satisfy the transmission conditions \eqref{S3:eq:tcweight} and \eqref{eq:TCconjugate}, respectively, we have
\begin{align*}
    \int_0^T [p\beta_x\xi w_tw_x]_adt=0,\ \forall a\in \Gamma.
\end{align*}
Now, the transmission conditions allow us to rewrite the remaining terms of $\kB_\Gamma^{dom}$ as follows
\begin{gather*}
    \dfrac{3}{2}\int_0^T[p^2\beta_x^5\wxi^5|w|^2]_{a}dt=\dfrac{3}{2}\int_0^T \big(p^2\beta_x^4\big)|_{a^+}[\beta_x]_a\wxi^5(t, a)|w(t, a)|^2dt,\\
    4\int_0^T [p^2\beta_x^3\wxi^3|w_x|^2]_{a}dt=4\int_0^T \big(p\beta_x^2\big)|_{a^+}[\beta_x]_a\wxi^3(t, a)|pw_x(t, a)|^2dt,\\
    \dfrac{3}{2}\int_0^T[p^2\beta_x\wxi|w_{xx}|^2]_{a}dt=\dfrac{3}{2}\int_0^T[\beta_x]_a\wxi(t, a)|pw_{xx}(t, a)|^2dt,\\
    \int_0^T [p^2\beta_x^3\wxi^3 ww_{xx}]_{a}dt=\int_0^T \big(p\beta_x^2\big)|_{a^+}[\beta_x]_a\wxi^3(t, a)pw_{xx}(t, a)w(t, a)dt.
\end{gather*}
Let us define the vector function $\vec\bw_{s, \ld}: (0, T)\times\Gamma\to \R^3$ by
\begin{align*}
    \vec\bw_{s, \ld}(t, a)=\big(\wxi^2(t, a)w(t, a), \wxi(t, a)\sqrt{p}w_x(t, a), p w_{xx}(t, a)\big)^\tr.
\end{align*}
By the above computations we can write
\begin{align*}
    \kB_\Gamma^{dom}(a)=\dfrac{3}{2}\int_0^T \wxi(t, a)\big(\boldsymbol{A}\vec\bw_{s, \ld}(t, a), \vec\bw_{s, \ld}(t, a)\big)_{\R^3}dt,
\end{align*}
where $\bA$ is defined by
\begin{align*}
    \bA(a):=\begin{pmatrix} [p^2\beta_x^5]_a & 0 & \tfrac{1}{3}[p\beta_x^3]_a\\
    0 & \tfrac{8}{3}[p\beta_x^3]_a & 0\\
    \tfrac{1}{3}[p\beta_x^3]_a & 0 & [\beta_x]\end{pmatrix}.
\end{align*}
No matter where $\omega_0$ is located, since $\beta$ satisfies the transmission conditions and Hypothesis \ref{assumM} is enforced, we have $[\beta_x]_a>0$ for any $a\in \Gamma$. Then, looking at the sign of each minor of $\bA(a)$, we have
\begin{align*}
    & \Delta_1(a)=[p^2\beta_x^5]_a=(p\beta_x^2)_{|_{a^-}}^2[\beta_x]_a>0,\\
    &\Delta_2(a)=\det\begin{pmatrix}
        [p^2\beta_x^5]_a & 0\\
    0 & \tfrac{8}{3}[p\beta_x^3]_a
    \end{pmatrix}=\frac{8}{3}(p\beta_x^2)_{|_{a^-}}^3[\beta_x]_a^2>0,\\
    &\Delta_3(a)=\det\bA(a)=\frac{8}{3}[p\beta_x^3]_a\det\begin{pmatrix}
        [p^2\beta_x^5]_a & \tfrac{1}{3}[p\beta_x^3]_a\\
    \tfrac{1}{3}[p\beta_x^3]_a & [\beta_x]_a
    \end{pmatrix}=\frac{64}{27}(p\beta_x^2)_{|_{a^-}}^3[\beta_x]_a^3>0.
\end{align*}
Given that $\bA(a)$ is a symmetric matrix with strictly positive minors, by using Sylvester's criterion, it follows that $\bA(a)$ is a positive definite matrix. We thus choose $\gamma>0$ to be the minimum over the lower bounds of the associated quadratic form to $\bA(a)$ running over $a\in\Gamma$. We thus obtain, uniformly in $a\in\Gamma$, that
\begin{align*}
    \int_0^T \wxi(t, a)\big(\boldsymbol{A}\vec\bw_{s, \ld}(t, a), \vec\bw_{s, \ld}(t, a)\big)_{\R^3}dt\geq \gamma \int_0^T \wxi(t, a)|\vec\bw_{s, \ld}(t, a)|_{\R^3}^2dt.
\end{align*}
As we did  with the boundary terms, by using inequality \eqref{S3:ineq:weightx} and Young's inequality, we get
\begin{align*}
    |\kB_\Gamma^{low}(a)|\lesssim \left(\dfrac{1}{s\ld^2}+\dfrac{1}{s^2\ld}+\dfrac{1}{s^2}+\dfrac{1}{\ld}\right)\int_0^T\Big(\wxi^5|w|^2+\wxi^3|\sqrt{p}w_x|^2+\wxi|pw_{xx}|^2\Big)(t, a)dt.
\end{align*}
Since the above estimates are uniform with respect to $a\in \Gamma$, by choosing $s_0$ and $\ld_0$ large enough yields that, for any $s\geq s_0$ and $\ld\geq \ld_0$, 
\begin{align*}
    |w|_{\Gamma, s, \ld, \xi}^2\lesssim \kB_\Gamma.
\end{align*}

\subsubsection{Back to the original variable} Gathering the inequalities obtained in the previous steps, we proved the following estimate for the conjugated operator.

\begin{proposition}\label{S3:prop:carleman-conj} Let $(\omega, p)$ satisfy Hypothesis \ref{assumM} and let $\omega_0\Subset\omega$. There exist $s_0>0$, $\ld_0>0$ and a constant $C>0$ depending on $L$, $T$, $p$, $s_0$, $\ld_0$ and $\norm{\beta}_{C^3([0, L]\setminus\Gamma)}$, such that for all $w\in \mc{V}_s$ we have
\begin{align*}
C\Big(\norm{\mc{L}_1 w}_{L^2(Q)}^2+\norm{\mc{L}_2 w}_{L^2(Q)}^2+\norm{w}_{s, \ld, \xi}^2+|w|_{\Gamma, s, \ld, \xi}^2\Big)\leq \norm{\mc{L}_\eta w}_{L^2(Q)}^2+\obs^2
\end{align*}
for any $s\geq s_0$ and $\ld\geq \ld_0.$
\end{proposition}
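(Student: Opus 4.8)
The plan is to assemble the three blocks of estimates already prepared—distributed, boundary, and interface—into the pointwise identity produced by squaring $\mc{L}_\eta w=(\mc{L}_1+\mc{L}_2+\mc{R})w$ in $L^2(Q)$. Expanding that norm through $\mc{L}_\eta w-\mc{R}w=(\mc{L}_1+\mc{L}_2)w$ together with the elementary bound $\norm{a-b}_{L^2(Q)}^2\leq 2\norm{a}_{L^2(Q)}^2+2\norm{b}_{L^2(Q)}^2$, exactly as in the computation preceding this proposition, gives
\begin{align*}
\norm{\mc{L}_1 w}_{L^2(Q)}^2+\norm{\mc{L}_2 w}_{L^2(Q)}^2+2\inn{\mc{L}_1 w,\mc{L}_2 w}_{L^2(Q)}\leq 2\norm{\mc{L}_\eta w}_{L^2(Q)}^2+2\norm{\mc{R} w}_{L^2(Q)}^2.
\end{align*}
Thus the whole matter reduces to bounding the cross term $\inn{\mc{L}_1 w,\mc{L}_2 w}_{L^2(Q)}$ from below by the interior and interface weighted norms, and then absorbing the residue $\norm{\mc{R}w}_{L^2(Q)}^2$.

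For the lower bound I would chain the estimates already in hand. Inequality \eqref{DKDV-CARLEMAN:INEQ-L2product-2} furnishes, for $s\geq s_0$ and $\ld\geq \ld_0$,
\begin{align*}
\norm{w}_{s,\ld,\xi}^2+\kB_0+\kB_L+\kB_\Gamma\lesssim \inn{\mc{L}_1 w,\mc{L}_2 w}_{L^2(Q)}+\iint_{(0,T)\times\omega_0}\!\!\big(s^5\ld^6\xi^5|w|^2+s^3\ld^4\xi^3|w_x|^2+s\ld^2\xi|w_{xx}|^2\big)\,dxdt.
\end{align*}
Since $\kB_0,\kB_L\geq 0$ by the sign of $\beta_x$ at the endpoints prescribed in \cref{S3:lem:weightObs}, and since the interface analysis yields $|w|_{\Gamma,s,\ld,\xi}^2\lesssim \kB_\Gamma$ once $\kB_\Gamma^{low}$ is absorbed into the positive-definite form $\kB_\Gamma^{dom}$ for $s_0,\ld_0$ large, I may discard $\kB_0+\kB_L$ and replace $\kB_\Gamma$ by the interface norm on the left, arriving at
\begin{align*}
\norm{w}_{s,\ld,\xi}^2+|w|_{\Gamma,s,\ld,\xi}^2\lesssim \inn{\mc{L}_1 w,\mc{L}_2 w}_{L^2(Q)}+\iint_{(0,T)\times\omega_0}\!\!\big(s^5\ld^6\xi^5|w|^2+s^3\ld^4\xi^3|w_x|^2+s\ld^2\xi|w_{xx}|^2\big)\,dxdt.
\end{align*}

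Finally I would insert this into the squared identity. The terms $\norm{\mc{L}_1 w}_{L^2(Q)}^2+\norm{\mc{L}_2 w}_{L^2(Q)}^2$ already sit on the left with the correct sign and are simply retained, while the cross term is replaced by its lower bound, so that $\norm{w}_{s,\ld,\xi}^2+|w|_{\Gamma,s,\ld,\xi}^2$ migrates to the left up to the localized $\omega_0$-integral. The residue is handled by the estimate $\norm{\mc{R}w}_{L^2(Q)}^2\lesssim \big(s^{-1}+s^{-1}\ld^{-2}\big)\norm{w}_{s,\ld,\xi}^2$ recorded earlier: enlarging $s_0$ makes the prefactor small enough to absorb $2\norm{\mc{R}w}_{L^2(Q)}^2$ into the $\norm{w}_{s,\ld,\xi}^2$ now present on the left. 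As all integrands are nonnegative and $\omega_0\Subset\omega$, the observation integral over $\omega_0$ is dominated by the one over $\omega$, which delivers the stated inequality. I expect the only real delicacy to be the bookkeeping of thresholds: the four absorptions—of the lower-order distributed term $\kD^{low}$, of the signless boundary term $\kB^*$, of the interface remainder $\kB_\Gamma^{low}$, and of the residue $\mc{R}w$—each demand $s_0$ or $\ld_0$ large, so one must check that these finitely many constraints are compatible and can be met simultaneously, while keeping track of the dependence of $C$ on $\omega$, $\Gamma$, $L$, $T$, $p$ and $\norm{\beta}_{C^3([0,L]\setminus\Gamma)}$.
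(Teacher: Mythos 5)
Your proposal is correct and follows essentially the same route as the paper: the proposition is obtained exactly by combining the squared identity for $\mc{L}_\eta w$, the lower bound \eqref{DKDV-CARLEMAN:INEQ-L2product-2} on the cross term, the positivity of $\kB_0$, $\kB_L$, the interface bound $|w|_{\Gamma,s,\ld,\xi}^2\lesssim \kB_\Gamma$, and absorption of $\norm{\mc{R}w}_{L^2(Q)}^2$ for $s_0$, $\ld_0$ large, with the $\omega_0$-integral dominated by the $\omega$-integral. Your closing remark on the compatibility of the finitely many largeness thresholds is precisely the implicit bookkeeping the paper performs when it ``gathers the inequalities obtained in the previous steps.''
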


Now we go back to the original variable. Recall that $u=e^{s\eta}w$ belongs to $\mc{V}$ and $\mc{L}_\eta w=e^{-s\eta}\mc{L}u$. Straightforward computations lead us to
\begin{align*}
    e^{-2s\eta}|u_x|^2&\lesssim |w_x|^2+s^2\ld^2\xi^2|w|^2,\\
    e^{-2s\eta}|u_{xx}|^2&\lesssim |w_{xx}|^2+s^2\ld^2\xi^2|w_x|^2+s^4\ld^4\xi^2|w|^2,
\end{align*}
for all $(t, x)\in Q'$, from which we get
\begin{align*}
    \iint_Q e^{-2s\eta}\Big(s^5\ld^6\xi^5|u|^2+s^3\ld^4\xi^3|u_x|^2+s\ld^2\xi|u_{xx}|^2\Big)dxdt
    \lesssim \norm{w}_{s, \ld, \xi}^2.
\end{align*}
Using $w=e^{-s\eta}u$, similar estimates lead us to
\begin{align*}
    \obs^2\lesssim \iint_{(0, T)\times\omega_0} e^{-2s\eta}\Big(s^5\ld^6\xi^5|u|^2+s^3\ld^4\xi^3|u_x|^2+s\ld^2\xi|u_{xx}|^2\Big)dxdt.
\end{align*}
From the above estimates and using $\omega_0\Subset\omega$, \cref{S3:prop:carleman-conj} directly implies \cref{thm:carleman_omega}.


\section{Control to the trajectories}\label{S4}

The aim of this section is to prove the controllability result \cref{thm:NLKdV-control}. To this end we will consider the following two relevant systems. The first one corresponds to the linearized system to \eqref{eq:NLKdV-control} around the aimed trajectory $\by$, which is
\begin{align}\label{eq:KdV-lin-control}
\left\{
\begin{array}{rll}
z_t+p(x)z_{xxx}+z_x+(\by z)_x=h+\mathbbm{1}_\omega v, & ~(t, x)\in (0, T)\times (0, L),\\
z(t, 0)=z(t, L)=z_{x}(t, L)=0, & ~t\in (0, T),\\
z(0, x)=z_0(x), & ~x\in (0, L),
\end{array}
\right.
\end{align}
coupled by \eqref{eq:tc}, where $v\in L^2(0, T; L^2(0, L))$ is the control and $h$ is a source in some appropriate weighted space. The second relevant system corresponds to the adjoint system associated to \eqref{eq:KdV-lin-control}
\begin{align}\label{eq:KdV-lin-adj}
\left\{
\begin{array}{rll}
-\vp_t-p(x)\vp_{xxx}-\vp_x-\by \vp_x = g, & ~(t, x)\in (0, T)\times (0, L),\\
\vp(t, 0)=\vp(t, L)=\vp_{x}(t, 0)=0, & ~t\in (0, T),\\
\vp(T, x) =  \vp_T(x), & ~x\in (0, L).
\end{array}
\right.
\end{align}
coupled by \eqref{eq:tc}, with appropriate initial data $\vp_T$ and source term $g$. The strategy follows a classical duality argument which is briefly described below:
\begin{enumerate}
    \item We establish a suitable Carleman estimate for the adjoint system \eqref{eq:KdV-lin-adj}.
    \item By means of the Carleman estimate, we obtain an observability inequality for \eqref{eq:KdV-lin-adj}. We then employ a variational approach to establish the null controllability of the linearized system \eqref{eq:KdV-lin-control} with a right-hand side decaying near $t=T$.
    \item We then apply a local inversion result in a suitable functional setting, inherited from the variational approach, to obtain the null controllability of the nonlinear system \eqref{eq:NLKdV-control}.
\end{enumerate}
In this section, we closely follow Cerpa, Montoya and Zhang \cite{CMZ20} and show their arguments can be adapted to the piecewise constant case. The main point being the regularity estimates provided by \cref{prop:wp:adj-linearized}, which combined with the Carleman estimate of \cref{thm:carleman_omega} will allow us to obtain a suitable one-parameter Carleman estimate and henceforth carry out the strategy.

\subsection{A suitable observability inequality} Let $\by\in \X_{\pw, T}^3(0, L)$, whose existence is guaranteed by \cref{prop:NLwp-Xs}. Let us introduce the operator $\mc{L}: \mc{V}\to L^2(Q)$ given by
\begin{align}\label{S4:def:L}
    \mc{L}z=z_t+p(x)z_{xxx}+z_x+(\overline{y}z)_x,
\end{align}
defined on the space of functions
\begin{align*}
    \mc{V}=\{z\in L^2(0, T; H_{\pw}^3(0, L))\ |\ \mc{L}z\in L^2(Q),\ z(0)=z(L)=z'(L)=0 \text{ and } z \text{ satisfies } \eqref{eq:tc}\}.
\end{align*}
In what follows, let $\omega_0\Subset\omega$ be non-empty and open, $\kappa\in (1, 2)$ and $\beta$ be constructed by \cref{S3:lem:weightObs}. From now on, let us fix $\ld\geq \ld_0$ large enough so the Carleman estimate of \cref{thm:carleman_omega} holds true with the weights $\eta$ and $\xi$ introduced in \eqref{S1:intro:weights}. Let us denote
\begin{align}\label{S4:def:weights2}
    \weta(t)=\max_{x\in [0, L]}\eta(t, x),\ \ \ueta(t)=\min_{x\in [0, L]}\eta(t, x),\ \ \zeta(t)=\dfrac{1}{t^2(T-t)^2},
\end{align}
We have the following one-parameter Carleman estimate.

\begin{proposition}\label{S4:prop:carleman-I}
    Let $(\omega, p)$ satisfy Hypothesis \ref{assumM}. Let $\by\in \X_{\pw, T}^3(0, L)$ be a solution of \eqref{eq:NLKdV-traj}. There exist $s_0>0$ and $C>0$ depending on $\omega$, $\Gamma$, $L$, $T$, $p$, $s_0$, $\ld_0$ and $\norm{\beta}_{C^3([0, L]\setminus\Gamma)}$ such that for any $\vp_T\in \D(\A^*)$ and $g\in L^2(0, T; \D(\A^*))$, the corresponding solution $\vp$ to \eqref{eq:KdV-lin-adj} satisfies
    \begin{multline}\label{S4:prop:ineq:carleman-I}
        \iint_Q e^{-4s\weta}\big(s^5\zeta^5|\vp|^2+s^3\zeta^3|\vp_x|^2+s\zeta|\vp_{xx}|^2\big)dxdt\\\leq C\left(\iint_Q e^{-2s\weta}|g|^2dxdt+s^7\iint_{(0, T)\times\omega}e^{-6s\ueta+2s\weta} \zeta^7|\vp|^2dxdt\right),
    \end{multline}
    for any $s\geq s_0$.
\end{proposition}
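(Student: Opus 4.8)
The plan is to derive this one-parameter inequality from the two-parameter Carleman estimate of \cref{thm:carleman_omega}, applied to the adjoint solution $\vp$, and then to collapse the two scales of weights into the purely temporal weights $\weta$, $\ueta$ and $\zeta$.

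\emph{Step 1: fitting the adjoint into the Carleman framework.} Multiplying the equation for $\vp$ by $-1$, one sees that $\vp$ solves $\mc{L}\vp=-g$ where $\mc{L}=\partial_t+p\partial_x^3+b\partial_x+d$ with $b=1+\by$ and $d=0$; since $\by\in\X_{\Gamma,T}^3(0,L)\hookrightarrow C([0,T];C^2(\overline{I_k}))$ we have $b\in L^\infty(Q)$, so the coefficient hypotheses of \cref{thm:carleman_omega} hold. The assumptions $\vp_T\in\D(\A^*)$ and $g\in L^2(0,T;\D(\A^*))$ together with \cref{prop:wp:adj-linearized} (estimate \eqref{prop:eq:kdvadjlin-l2Hs} with $s=3$) give $\vp\in L^2(0,T;H_\Gamma^4(0,L))$ and ensure that $\vp$ satisfies the boundary and transmission conditions, so that $\vp\in\mc{V}$. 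Fixing $\ld\ge\ld_0$ once and for all (so that all powers of $\ld$ become harmless constants), \cref{thm:carleman_omega} applied to $\vp$ (with $\|e^{-s\eta}\mc{L}\vp\|_{L^2}^2=\iint_Q e^{-2s\eta}|g|^2$) yields
\begin{multline*}
\iint_Q e^{-2s\eta}\bigl(s^5\xi^5|\vp|^2+s^3\xi^3|\vp_x|^2+s\xi|\vp_{xx}|^2\bigr)\,dx\,dt\\
\lesssim \iint_Q e^{-2s\eta}|g|^2\,dx\,dt+\iint_{(0,T)\times\omega}e^{-2s\eta}\bigl(s^5\xi^5|\vp|^2+s^3\xi^3|\vp_x|^2+s\xi|\vp_{xx}|^2\bigr)\,dx\,dt .
\end{multline*}

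\emph{Step 2: removing the derivatives in the observation.} Using $\omega_0\Subset\omega$, I would fix a cut-off $\chi\in C_c^\infty(\omega)$ equal to $1$ on a neighbourhood of $\overline{\omega_0}$ and estimate the local integrals of $\xi|\vp_{xx}|^2$ and $\xi^3|\vp_x|^2$ against $\chi e^{-2s\eta}$ by integrating by parts, trading each spatial derivative for a power of $s\xi$ (a Caccioppoli-type argument). The interior contributions so produced are absorbed into the left-hand side for $s$ large, leaving an observation term involving only $|\vp|^2$ but with the raised power $s^7\xi^7$; this is the source of the factor $s^7\zeta^7$ in \eqref{S4:prop:ineq:carleman-I}.

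\emph{Step 3: collapsing to temporal weights.} With $\ld$ fixed one has $\xi^2\sim\zeta$ and $\weta\sim\ueta\sim\zeta^{1/2}$ up to constants, while $e^{-2s\eta}$ decays near $t\in\{0,T\}$ like $e^{-cs/(t(T-t))}$, faster than any power of $\zeta$ blows up. For the left-hand side this gives the pointwise bound $e^{-4s\weta}\zeta^{k}\lesssim e^{-2s\eta}\xi^{k}$ (using $\eta\le\weta$ and that the exponential dominates the polynomial, uniformly in $s\ge s_0$), so the target left-hand side is controlled by the left-hand side obtained in Step 2. The analogous passage in the source and observation terms is more delicate, since a direct pointwise comparison fails ($e^{-2s\eta}\ge e^{-2s\weta}$ because $\eta\le\weta$). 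To reach the $x$-independent weights I would combine the Carleman estimate with an energy estimate for the adjoint: integrating the equation against a temporally weighted multiple of $\vp$, integrating by parts with the transmission conditions and the dissipativity computed in \cref{prop:semigroup}, and applying Young's inequality so that the product $\vp\,g$ generates exactly a term $e^{-2s\weta}|g|^2$ plus faster-decaying $|\vp|^2$ contributions that are reabsorbed by the left-hand side. The quantitative control of $\weta/\ueta$ provided by property~(2) of \cref{S3:lem:weightObs} (the constraint $\kappa\in(1,2)$, i.e.\ \eqref{S3:eq:weightkbound}) is what keeps these exponents consistent, and the $H_\Gamma^4$-regularity of $\vp$ from \cref{prop:wp:adj-linearized} is what makes the interface traces and integrations by parts legitimate.

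\emph{Main obstacle.} I expect the decisive difficulty to be precisely this weight homogenization of the source term --- replacing the $x$-dependent Carleman weight $e^{-2s\eta}$ on $g$ by the temporal weight $e^{-2s\weta}$ --- which does not follow from monotonicity of the weights alone and forces the energy/dissipation argument together with careful bookkeeping of the powers of $s$ and $\zeta$. Once this is achieved, assembling the estimates of Steps 1--3 and absorbing the lower-order terms for $s\ge s_0$ gives \eqref{S4:prop:ineq:carleman-I}.
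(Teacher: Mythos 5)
You have located the decisive difficulty correctly, but your resolution of it does not work, and this is a genuine gap rather than a technical detail. Applying \cref{thm:carleman_omega} directly to $\vp$ forces the source into the estimate as $\iint_Q e^{-2s\eta}|g|^2\,dxdt$, which, since $\eta\le\weta$, exceeds the target term $\iint_Q e^{-2s\weta}|g|^2\,dxdt$ by the factor $e^{2s(\weta-\eta)}$; this factor blows up exponentially as $t\to 0,T$ at every $x$ where $\beta(x)>\min\beta$, so no comparison of weights can repair it afterwards. The energy argument you propose as a substitute cannot do the job either: testing the backward equation against $e^{-2s\weta}\vp$ produces the sign-indefinite term $s\iint_Q(\partial_t\weta)\,e^{-2s\weta}|\vp|^2\,dxdt$ (the derivative $\partial_t\weta$ changes sign at $t=T/2$), produces no localized observation term at all, and the Young remainder $\iint_Q e^{-2s\weta}|\vp|^2\,dxdt$ it generates is \emph{larger} than the Carleman left-hand side near $t=0,T$ (the ratio $e^{2s\weta}/(s^5\zeta^5)$ diverges there), hence cannot be ``reabsorbed''. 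What the paper does instead --- and what is missing from your scheme --- is to decompose \emph{before} invoking the Carleman estimate: setting $\rho_0=e^{-s\weta}$, one writes $\rho_0\vp=u+z$, where $z$ solves the adjoint system with source $\rho_0 g$ and terminal datum $0$, and $u$ solves it with source $(-\rho_0)_t\vp$. Then $g$ enters only through the plain regularity bound $\norm{z}_{L^2(0,T;H_\Gamma^2(0,L))}^2\lesssim\norm{\rho_0 g}_{L^2(Q)}^2$ of \cref{prop:wp:adj-linearized}, which already carries the temporal weight, while the Carleman estimate is applied to $u$, whose source is self-absorbable: $|(\rho_0)_t\vp|\lesssim s\xi^{3/2}|u+z|$, the $u$-part being absorbed by the left-hand side for $s$ large and the $z$-part controlled again by $\norm{\rho_0 g}_{L^2(Q)}^2$.

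The decomposition also repairs two further points where your plan breaks down. First, your Caccioppoli step is not a pure integration by parts for this third-order operator: eliminating the local term in $|\vp_{xx}|^2$ forces the substitution $p\vp_{xxx}=-\vp_t-(1+\by)\vp_x-g$, which reintroduces $\vp_t$ (not controlled by the Carleman left-hand side) and a local source $\iint_{(0,T)\times\omega}\chi e^{-2s\eta}|g|^2\,dxdt$ carrying the same bad weight as above; the paper instead removes the derivative observation terms by interpolating $H^1(\omega)$ and $H^2(\omega)$ between $L^2(\omega)$ and $H^3(\omega)$ and then bounding the resulting weighted $H^3(\omega)$-remainder through a bootstrap that applies \cref{prop:wp:adj-linearized} to time-weighted versions of $u$ --- this bootstrap, not the $H_\Gamma^4$-regularity of $\vp$, is the real reason the hypotheses $\vp_T\in\D(\A^*)$ and $g\in L^2(0,T;\D(\A^*))$ appear. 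Second, even granting your Step 2, it leaves an observation term of size $s^7\xi^7e^{-2s\eta}|\vp|^2$, which is \emph{not} dominated by the stated $s^7\zeta^7e^{-6s\ueta+2s\weta}|\vp|^2$: at a point of $\omega$ where $\beta$ attains its maximum (such a point exists in $\omega_0$ by the construction of \cref{S3:lem:weightObs}) one has $\eta=\ueta$, and the ratio of your weight to the stated one is of order $(t(T-t))^{7}e^{2s(2\ueta-\weta)}$, which diverges as $t\to0,T$ because $2\ueta-\weta\ge\tfrac12\ueta>0$ once $\ld$ is large enough that $2\weta<3\ueta$ (a condition the paper's argument requires anyway). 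In the paper, the missing decay comes for free from the decomposition: the local term is in $u$, and $|u|^2\le 2e^{-2s\weta}|\vp|^2+2|z|^2$ supplies exactly the extra factor $e^{-2s\weta}$.
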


\begin{proof} The proof is made in two steps: we first decompose the solution of \eqref{eq:KdV-lin-adj} to have a regular source term with suitable decay in $t$ and for which we will apply the Carleman estimate. This decomposition will then allow us to employ a bootstrap argument to estimate the local terms coming from the higher order norms on the right-hand side of the Carleman estimate.

\medskip
\noindent{\emph{Step 1. Decomposition of the solution}.} Let us decompose the solution $\vp$ of \eqref{eq:KdV-lin-adj}, with the aim of obtaining $L^2$ regularity on the right-hand side of \eqref{eq:KdV-lin-adj}. Let us introduce $z$ and $u$, solutions of
\begin{align}\label{eq:KdV-lin-adj-d1}
\left\{
\begin{array}{rll}
-z_t-p(x)z_{xxx}-z_x-\by z_x = \rho_0g, & ~(t, x)\in (0, T)\times (0, L),\\
z(t, 0)=z(t, L)=z_{x}(t, 0)=0, & ~t\in (0, T),\\
z(T, x) =  0, & ~x\in (0, L),
\end{array}
\right.
\end{align}
and
\begin{align}\label{eq:KdV-lin-adj-d2}
\left\{
\begin{array}{rll}
-u_t-p(x)u_{xxx}-u_x-\by u_x = (-\rho_0)_t\vp, & ~(t, x)\in (0, T)\times (0, L),\\
u(t, 0)=u(t, L)=u_{x}(t, 0)=0, & ~t\in (0, T),\\
u(T, x) =  0, & ~x\in (0, L),
\end{array}
\right.
\end{align}
both of them coupled by the corresponding transmission conditions \eqref{eq:tc}, with $\rho_0(t):=e^{-s\weta}$. By uniqueness, we have $\rho_0\vp=u+z$. For the first system, using the regularity result for \eqref{eq:KdV-lin-adj-d1}, we have
\begin{align}\label{eq:z-H2-estimate}
    \norm{z}_{L^2(0, T; H_{\pw}^2(0, L))}^2\leq C\norm{\rho_0g}_{L^2(Q)}^2.
\end{align}
Now, we apply the Carleman estimate of \cref{thm:carleman_omega} for solutions of \eqref{eq:KdV-lin-adj-d2}, with the new weights \eqref{S4:def:weights2} and fixed $\ld$ as above, obtaining
\begin{multline}\label{ineq:carleman-1p}
C\iint_Q e^{-2s\weta}\big(s^5\zeta^5|u|^2+s^3\zeta^3|u_x|^2+s\zeta|u_{xx}|^2\big)dxdt\leq \iint_Q e^{-2s\weta}|g|^2dxdt\\+\iint_{(0, T)\times\omega}e^{-2s\ueta} \big(s^5\zeta^5|u|^2dxdt+s^3\zeta^3|u_x|^2dxdt+s \zeta|u_{xx}|^2\big)dxdt.
\end{multline}
Observe that on the right-hand side, we used $|(\rho_0)_t\vp|\leq Cs\xi^{3/2}|\rho_0\vp|$ followed by the relation $\rho_0\vp=u+z$, to then absorb the term containing $u$ for $s$ large enough and conclude using estimate \eqref{eq:z-H2-estimate}.

\medskip
\noindent{\emph{Step 2. Local estimates}.} By classical interpolation (see Lions-Magenes \cite[Section 9]{LM72})
\begin{align*}
    H^1(\omega)=(L^2(\omega), H^3(\omega))_{1/3, 2}\ \text{ and }\ H^2(\omega)=(L^2(\omega),  H^3(\omega))_{2/3, 2}.
\end{align*}
Let $\veps>0$. By Young's inequality with $(p, q)=(3/2, 3)$ we get
\begin{multline*}
    s^3\iint_{(0, T)\times\omega}\zeta^3e^{-2s\ueta}|u_x|^2dxdt\leq s^3\int_0^T \zeta^3e^{-2s\ueta}\norm{u}_{L^2(\omega)}^{4/3}\norm{u}_{H^3(\omega)}^{2/3}dt\\
    \leq C_\veps s^{11/2}\int_0^T \zeta^{11/2}e^{-3s\ueta+s\weta}\norm{u}_{L^2(\omega)}^2dt+\veps s^{-2}\int_0^T \zeta^{-2}e^{-s2\weta}\norm{u}_{H^3(\omega)}^2dt.
\end{multline*}
Similarly, with $(p, q)=(3, 3/2)$ we get
\begin{multline*}
    s\iint_{(0, T)\times\omega}e^{-2s\ueta}\zeta|u_{xx}|^2dxdt\leq s\int_0^T \zeta e^{-2s\ueta}\norm{u}_{L^2(\omega)}^{2/3}\norm{u}_{H^3(\omega)}^{4/3}dt\\
    \leq C_\veps s^{7}\int_0^T \zeta^{7}e^{-6s\ueta+4s\weta}\norm{u}_{L^2(\omega)}^2dt+\veps s^{-2}\int_0^T \zeta^{-2}e^{-2s\weta}\norm{u}_{H^3(\omega)}^2dt.
\end{multline*}
From the Carleman estimate and the inequalities above, we get
\begin{multline*}
    C\iint_Q e^{-2s\eta}\big(s^5\zeta^5|u|^2+s^3\zeta^3|u_x|^2+s\zeta|u_{xx}|^2\big)dxdt\\ \leq \iint_Q e^{-2s\weta}|g|^2dxdt+s^7\iint_{(0, T)\times\omega}\zeta^7e^{-6s\ueta+4s\weta}|u|^2dxdt+\veps\left(s^{-2}\int_0^T \zeta^{-2}e^{-2s\weta}\norm{u}_{H^3(\omega)}^2dt\right).
\end{multline*}
We want to estimate the local term containing $\norm{u}_{H^3(\omega)}^2$. By looking at the weights accompanying the local $H^3$-norm, let us introduce $\widehat{u}=\widehat{\rho}(t)u$ with $\widehat{\rho}(t):=s^{-1/2}\ld^{-1}\zeta^{-1/2} e^{-s\weta}$. We thus see that $\widehat{u}$ solves \eqref{eq:KdV-lin-adj-d3} with $\Tilde{\rho}$ replaced by $\widehat{\rho}$,
\begin{align}\label{eq:KdV-lin-adj-d4}
\left\{
\begin{array}{rll}
-\widehat{u}_t-p(x)\widehat{u}_{xxx}-\widehat{u}_x-\by \widehat{u}_x = \widehat{\rho}(-\rho_0)_t\vp-\widehat{\rho}_t u, & ~(t, x)\in (0, T)\times (0, L),\\
\widehat{u}(t, 0)=\widehat{u}(t, L)=\widehat{u}_{x}(t, 0)=0, & ~t\in (0, T),\\
\widehat{u}(T, x) =  0, & ~x\in (0, L),
\end{array}
\right.
\end{align}
coupled by the corresponding transmission conditions \eqref{eq:tc}. Since $\vp\in C([0, T], \D(\A^*))$, using the regularity estimates given by \cref{prop:wp:adj-linearized}, we have
\begin{align*}
    \norm{s^{-1/2}\zeta^{-1/2} e^{-s\weta}u}_{L^2(0, T; H_{\pw}^3(0, L))}^2&=\norm{\widehat{u}}_{L^2(0, T; H_{\pw}^3(0, L))}^2\\ &\leq C\big(\norm{s^{1/2}\zeta e^{-s\weta}u}_{L^2(0, T; H^1(0, L))}^2+\norm{s^{1/2}\zeta e^{-2s\weta}\vp}_{L^2(0, T; H^1(0, L))}^2\big).
\end{align*}
We are then led to define $\Tilde{u}=\Tilde{\rho}(t)u$ with $\Tilde{\rho}(t)=s^{1/2}\zeta e^{-s\weta}$ aiming to estimate the first term of the right-hand side in the inequality above. We see that $\Tilde{u}$ is the solution of
\begin{align}\label{eq:KdV-lin-adj-d3}
\left\{
\begin{array}{rl}
-\Tilde{u}_t-p(x)\Tilde{u}_{xxx}-\Tilde{u}_x-\by \Tilde{u}_x = \Tilde{\rho}(-\rho_0)_t\vp-\Tilde{\rho}_t u, & ~(t, x)\in (0, T)\times (0, L),\\
\Tilde{u}(t, 0)=\Tilde{u}(t, L)=\Tilde{u}_{x}(t, 0)=0, & ~t\in (0, T),\\
\Tilde{u}(T, x) =  0, & ~x\in (0, L),
\end{array}
\right.
\end{align}
coupled by the corresponding transmission conditions \eqref{eq:tc}. As $|(\rho_0)_t|\lesssim s\zeta^{3/2}e^{-s\weta}$, we get
\begin{align*}
    |\Tilde{\rho}_t|=s^{1/2}|\zeta_t\rho_0(t)+\zeta(\rho_0)_t|\lesssim \big(s^{1/2}\zeta^{3/2}+s^{3/2}\zeta^{5/2}\big)\rho_0\lesssim s^{3/2}\zeta^{5/2}e^{-s\weta}.
\end{align*}
Using once again \cref{prop:wp:adj-linearized}, we obtain
\begin{align*}
    \norm{s^{1/2}\zeta e^{-s\weta}u}_{L^2(0, T; H_{\pw}^2(0, L))}^2=\norm{\Tilde{u}}_{L^2(0, T; H_{\pw}^2(0, L))}^2\leq C\big(\norm{s^{3/2}\zeta^{5/2} e^{-s\weta}u}_{L^2(Q)}^2+\norm{s^{3/2}\zeta^{5/2}e^{-2s\weta}\vp}_{L^2(Q)}^2\big).
\end{align*}
Gathering the above inequalities, we have
\begin{multline*}
    \norm{s^{-1/2}\zeta^{-1/2} e^{-s\weta}u}_{L^2(0, T; H_{\pw}^3(0, L))}^2\leq C\Big(\norm{s^{3/2}\zeta^{5/2} e^{-s\weta}u}_{L^2(Q)}^2\\+\norm{s^{3/2}\zeta^{5/2}e^{-2s\weta}\vp}_{L^2(Q)}^2+\norm{s^{1/2}\zeta e^{-2s\weta}\vp}_{L^2(0, T; H^1(0, L))}^2\Big).
\end{multline*}
Since $(s, t)\mapsto s^{3/2}\zeta^{5/2}e^{-s\weta}$ is bounded, using \eqref{eq:z-H2-estimate} we see that the right-hand side of the inequality above is bounded by the left-hand side of the Carleman estimate \eqref{ineq:carleman-1p} and $\norm{\rho_0 g}_{L^2(Q)}^2$. Therefore
\begin{multline*}
    \iint_Q e^{-2s\eta}\big(s^5\zeta^5|u|^2+s^3\zeta^3|u_x|^2+s\zeta|u_{xx}|^2\big)dxdt+\norm{s^{-1/2}\zeta^{-1/2} e^{-s\weta}u}_{L^2(0, T; H_{\pw}^3(0, L))}^2\\ \leq C\left(\iint_Q e^{-2s\weta}|g|^2dxdt+s^7\iint_{(0, T)\times\omega}e^{-6s\ueta+4s\weta}\zeta^7|u|^2dxdt\right)+\veps\left(s^{-2}\int_0^T \zeta^{-2}e^{-2s\weta}\norm{u}_{H^3(\omega)}^2dt\right).
\end{multline*}
Choosing $\veps>0$ small enough, the last term on the right-hand side above, can be absorbed by the last term on the left-hand side above. To return to the $\vp$ variable, we use $\rho_0\vp=z+u$ and estimate \eqref{eq:z-H2-estimate} to get
\begin{multline*}
    \iint_Q e^{-4s\weta}\big(s^5\xi^5|\vp|^2+s^3\xi^3|\vp_x|^2+s\xi|\vp_{xx}|^2\big)dxdt\\
    \leq C\left(\iint_Q e^{-2s\weta}|g|^2dxdt+\iint_Q e^{-2s\eta}\big(s^5\zeta^5|u|^2+s^3\zeta^3|u_x|^2+s\zeta|u_{xx}|^2\big)dxdt\right).
\end{multline*}
Once again, using estimate \eqref{eq:z-H2-estimate} and that $(s, t)\mapsto s^7e^{-6s\ueta+4s\weta}\zeta^7$ is bounded for $s\geq s_0$ and $t\in (0, T)$, we obtain
\begin{align*}
    s^7\iint_{(0, T)\times\omega}e^{-6s\ueta+4s\weta}\zeta^7|u|^2dxdt&\leq C\left(\iint_Q e^{-2s\weta}|g|^2dxdt+s^7\iint_{(0, T)\times\omega}e^{-6s\ueta+2s\weta}\zeta^7|\vp|^2dxdt\right).
\end{align*}
Putting together the three last estimates, we arrive to inequality \eqref{S4:prop:ineq:carleman-I}, finishing the proof.
\end{proof}

For notational convenience, we introduce $\mc{L}^*: \mc{V}^*\to L^2(Q)$, the adjoint operator to \eqref{S4:def:L},
\begin{align}\label{S4:def:L*}
    \mc{L}^*\psi:=-\psi_t-p(x)\psi_{xxx}-\psi_x-\overline{y}\psi_x,
\end{align}
acting on the space of functions
\begin{align*}
    \mc{V}^*:=\{\psi\in L^2(0, T; H_{\pw}^3(0, L))\ |\ \mc{L}^*\psi\in L^2(Q),\ \psi(0)=\psi(L)=\psi'(0)=0 \text{ and } \psi \text{ satisfies } \eqref{eq:tc}\}.
\end{align*}
We now introduce weight that does not vanish at $t=0$. Let $\ell\in C^1([0, T])$ be a positive function in $[0, T)$ defined by
\begin{align*}
    \ell(t)=\left\{\begin{array}{cc}
        T^2/4     &\ t\in [0, T/2], \\
        t(T-t)    &\ t\in [T/2, T].
    \end{array}\right.
\end{align*}
We then consider
\begin{align*}
    \al(t, x)=(e^{\kappa \ld \norm{\beta}_\infty}-e^{\ld\beta(x)})\tau(t),\ \ \tau(t)=\dfrac{1}{\ell(t)},\ \
    \wal(t)=\max_{x\in [0, L]}\al(t, x),\ \ \ual(t)=\min_{x\in [0, L]}\al(t, x).    
\end{align*}
We further ask that $\lambda\geq \kappa^2/\norm{\beta}_\infty$, where $\kappa$ is the parameter  used in \cref{S3:lem:weightObs}. Thus, from now on we assume that $\ld\geq \max\{\ld_0, \kappa^2/\norm{\beta}_\infty\}$ and therefore $2\wal<3\ual$ holds. As a consequence of \cref{S4:prop:carleman-I} we have the following weighted observability inequality.

\begin{lemma}\label{S4:lem:carleman-II}
    Under the assumptions of \cref{S4:prop:carleman-I}, there exist $s$ and $C$ such that every solution $\vp$ of \eqref{eq:KdV-lin-adj} satisfies
    \begin{multline}\label{S4:lem:ineq:carleman_decay}
        \iint_Q e^{-4s\wal}\big(\tau^5|\vp|^2+\tau^3|\vp_x|^2+\tau|\vp_{xx}|^2\big)dxdt+\norm{\vp(0)}_{L^2(0, L)}^2\\ \leq C\left(\iint_Q e^{-2s\wal}|g|^2dxdt+\iint_{(0, T)\times\omega}e^{-6s\ual+2s\wal} \tau^7|\vp|^2dxdt\right).
    \end{multline}
\end{lemma}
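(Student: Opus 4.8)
The plan is to exploit the two regimes of the modified weight: on $[T/2,T]$ one has $\ell(t)=t(T-t)$, so $\tau=\zeta^{1/2}$ and $\al=\eta$ (hence $\wal=\weta$, $\ual=\ueta$), while on $[0,T/2]$ one has $\ell\equiv T^2/4$, so $\tau$, $\al$, $\wal$, $\ual$ are constant and bounded away from $0$. I split $Q=Q^-\cup Q^+$ with $Q^-=(0,T/2)\times(0,L)$ and $Q^+=(T/2,T)\times(0,L)$, fix $s$ large as in \cref{S4:prop:carleman-I}, and treat the halves separately. On $Q^+$ the weights coincide with the Carleman ones and $\tau\ge 4/T^2$, $\zeta=\tau^2$, so each left-hand term of \eqref{S4:lem:ineq:carleman_decay} on $Q^+$ is dominated by its counterpart in \eqref{S4:prop:ineq:carleman-I} (e.g.\ $e^{-4s\wal}\tau^5\le C\,e^{-4s\weta}s^5\zeta^5$, since $1\le Cs^5\tau^5$); hence \cref{S4:prop:carleman-I} controls the $Q^+$ contribution by its right-hand side. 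Comparing that right-hand side to the one of \eqref{S4:lem:ineq:carleman_decay} then rests on the weight identities above, on $\weta\ge\wal$ throughout $Q$ (which gives $\iint_Q e^{-2s\weta}|g|^2\le\iint_Q e^{-2s\wal}|g|^2$), and on a comparison of the two local observation weights on each subinterval.

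The new contributions are $\norm{\vp(0)}^2_{L^2}$ and the $Q^-$ part of the left-hand side, where the weights no longer degenerate. For the former I run an energy argument: testing \eqref{eq:KdV-lin-adj} against $\vp$ and integrating by parts with the boundary and transmission conditions exactly as in \cref{prop:semigroup} — the third-order term producing the nonnegative boundary contribution $\tfrac{p_{N-1}}{2}|\vp_x(t,L)|^2$, which is discarded, and the transport terms handled by an integration by parts — leads to the differential inequality
\begin{align*}
\frac{d}{dt}\norm{\vp(t)}^2_{L^2(0,L)}\ge -C_0\norm{\vp(t)}^2_{L^2(0,L)}-\norm{g(t)}^2_{L^2(0,L)},
\end{align*}
with $C_0$ depending on $\norm{\by_x}_{L^\infty(Q)}$, finite because $\by\in\X_{\Gamma,T}^3(0,L)$. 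A backward Gronwall estimate then gives, for each $t_0\in(T/2,3T/4)$, $\norm{\vp(0)}^2_{L^2}\le C\big(\norm{\vp(t_0)}^2_{L^2}+\int_0^{t_0}\norm{g}^2_{L^2}\,dt\big)$; averaging in $t_0$ over $(T/2,3T/4)$ bounds the first term by the Carleman left-hand side (all weights are comparable to positive constants on compact subintervals of $(0,T)$) and, crucially, the second term by $\iint_Q e^{-2s\wal}|g|^2$, since $e^{-2s\wal}$ is bounded below on $(0,3T/4)$.

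For the $Q^-$ part of the left-hand side, which up to constants equals $\int_0^{T/2}\norm{\vp(t)}^2_{H_\Gamma^2(0,L)}\,dt$ because the weights are bounded there, I regard $\vp$ on $(0,t_0)$ as the solution of \eqref{eq:KdV-lin-adj} with terminal datum $\vp(t_0)$ and invoke the regularity estimate \eqref{prop:eq:kdvadjlin-l2Hs} of \cref{prop:wp:adj-linearized} with $s=1$ to get $\norm{\vp}_{L^2(0,t_0;H_\Gamma^2)}\le C\big(\norm{\vp(t_0)}_{H_\Gamma^1}+\norm{g}_{L^2(0,t_0;L^2)}\big)$; choosing $t_0\in(T/2,3T/4)$ so that $\norm{\vp(t_0)}^2_{H_\Gamma^1}\lesssim\int_{T/2}^{3T/4}\norm{\vp}^2_{H_\Gamma^2}\,dt$ by pigeonhole, the right-hand side is again absorbed by the Carleman left-hand side and the $e^{-2s\wal}$-weighted source. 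Collecting the $Q^+$, $Q^-$ and $\norm{\vp(0)}^2$ bounds yields \eqref{S4:lem:ineq:carleman_decay}.

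I expect the main obstacle to be the weight bookkeeping rather than any single hard estimate: one must thread the energy and regularity arguments so that $g$ enters only through $\iint_Q e^{-2s\wal}|g|^2$ — which forces restricting all Gronwall and regularity steps to times $t_0<3T/4$, where $e^{-2s\wal}$ stays bounded below — and one must reconcile the two local observation terms, whose weights carry different powers of $\tau$ on $[T/2,T]$, using the weight identities on $Q^+$ and the fact that the new weights are constant on $Q^-$.
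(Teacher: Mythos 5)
Your proof follows the same skeleton as the paper's: restrict the one-parameter Carleman estimate of \cref{S4:prop:carleman-I} to $(T/2,T)$, where $\al=\eta$, and recover both $\norm{\vp(0)}_{L^2(0,L)}^2$ and the $(0,T/2)$-part of the left-hand side from energy/regularity estimates whose ``observation'' lives in $(T/2,3T/4)$ and is then fed back into the Carleman bound. The paper implements the second half with a cutoff $\chi$ ($\chi\equiv 1$ on $[0,T/2]$, $\chi\equiv 0$ on $[3T/4,T]$) and the estimates of \cref{prop:wp:adj-linearized} applied to $\chi\vp$; your backward Gronwall inequality for $\norm{\vp(0)}_{L^2(0,L)}^2$ and your application of \eqref{prop:eq:kdvadjlin-l2Hs} with $s=1$ on $(0,t_0)$, with $t_0\in(T/2,3T/4)$ chosen by a mean-value argument, are correct substitutes (note $\vp(t_0)\in\D(\A^*)$ since $\vp\in C([0,T],\D(\A^*))$ under the standing hypotheses, so the regularity estimate is indeed applicable, and the feedback of the $(T/2,3T/4)$ terms into the Carleman bound is not circular). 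Your observation that $\weta\geq\wal$ on all of $Q$, hence $e^{-2s\weta}\leq e^{-2s\wal}$ pointwise, is a cleaner treatment of the source term than the paper's boundedness argument on $[0,T/2]$.

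The one step that does not close as you set it up is the comparison of the two local observation terms. You read \eqref{S4:def:weights2} literally, so $\zeta=\tau^2$ on $(T/2,T)$; this helps on the left-hand side (where you need $\tau^k\lesssim s^k\zeta^k$), but it makes the right-hand side comparison impossible: on $(T/2,T)$ the exponential weights coincide exactly ($\ueta=\ual$, $\weta=\wal$), so bounding the local term of \eqref{S4:prop:ineq:carleman-I} by that of \eqref{S4:lem:ineq:carleman_decay} would require $s^7\zeta^7=s^7\tau^{14}\leq C\tau^7$, i.e.\ $s^7\tau^7\leq C$, which fails as $t\to T^-$ since $\tau\to\infty$; there is no spare exponential decay left to absorb the extra $\tau^7$, so no ``weight identity on $Q^+$'' can rescue this. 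The resolution is that $\zeta$ in \eqref{S4:def:weights2} must be read as $1/(t(T-t))$ (a typo in the paper): this is forced by the derivation of \cref{S4:prop:carleman-I} from \cref{thm:carleman_omega}, whose left-hand side only carries $\xi^5\sim (t(T-t))^{-5}$, and it is what the paper's own proof uses when it asserts $\tau=\zeta$ on $(T/2,T)$. With that reading both of your $Q^+$ comparisons are immediate for the fixed $s$ (no absorption or largeness of $s$ is needed, only constants depending on $s$), and the rest of your argument goes through.
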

\begin{proof}
    By construction, $\eta=\al$ and $\tau=\zeta$ in $(T/2, T)\times [0, L]$. Therefore, as a consequence of \cref{S4:prop:carleman-I} we get
    \begin{multline*}
        \int_{T/2}^T\int_0^L e^{-4s\wal}\big(s^5\tau^5|\vp|^2+s^3\tau^3|\vp_x|^2+s\tau|\vp_{xx}|^2\big)dxdt\\
        =\int_{T/2}^T\int_0^L e^{-4s\weta}\big(s^5\zeta^5|\vp|^2+s^3\zeta^3|\vp_x|^2+s\zeta|\vp_{xx}|^2\big)dxdt\\
        \leq C\left(\iint_Q e^{-2s\weta}|g|^2dxdt+s^7\iint_{(0, T)\times\omega}e^{-6s\ueta+4s\weta} \zeta^7|\vp|^2dxdt\right).
    \end{multline*}
    From now on, let us fix $s\geq s_0$. By construction of the weights, we only need to focus the analysis on $(0, T/2)$. Using inequalities $e^{-2s\weta}\leq C$ and $e^{-6s\ueta+4s\weta} \zeta^7\geq C$ in $[0, T/2]$, followed by the fact that $\tau$ is constant in $[0, T/2]$, we get
    \begin{multline}\label{ineq:carleman-aux1}
        \int_{T/2}^T\int_0^L e^{-4s\wal}\big(s^5\tau^5|\vp|^2+s^3\tau^3|\vp_x|^2+s\tau|\vp_{xx}|^2\big)dxdt\\
        \leq C\left(\iint_Q e^{-2s\wal}|g|^2dxdt+\iint_{(0, T)\times\omega}e^{-6s\ual+4s\wal} \tau^7|\vp|^2dxdt\right).
    \end{multline}
        
    Let us take a cutoff $\chi\in C^1([0,T])$ such that $\chi\equiv 1$ in 
	$[0, T/2]$ and $\chi\equiv 0$ in $[3T/4,T]$. Observe that $\chi\vp\in \mc{V}^*$, $\chi(T)\vp(T,\cdot)=0$ and $\mc{L}^*(\chi\vp)=\chi\mc{L}^*\vp-\chi'\vp$. Thus, given that $g\in L^2(0, T; \D(\A^*))$, by semigroup estimates we get
    \begin{align*}
        \norm{\chi\vp}_{C([0, T], L^2(0, L))}\leq C\norm{\chi g-\chi'\vp}_{L^2(0, T; L^2(0, L))},
    \end{align*}
    from which follows
    \begin{align*}
        \norm{\vp}_{C([0, T/2], L^2(0, L))}\leq C\norm{g}_{L^2(0, 3T/4; L^2(0, L))}+\norm{\vp}_{L^2(T/2, 3T/4; L^2(0, L))}.
    \end{align*}
    By employing \cref{prop:wp:adj-linearized} and the above estimate, we obtain
	\begin{align*}
		\norm{\vp(0)}_{L^2(0,L)}^2+\norm{\vp}_{L^2(0,T/2; H_{\pw}^2(0,L))}^2
		\leq C\big(\norm{g}_{L^2(0,3T/4; L^2(0,L))}^2+\norm{\vp}_{L^2(T/2,3T/4;L^2((0,L))}^2\big). 
	\end{align*}
    Taking into account that 
    \begin{align*}
        \tau^5e^{-2s\wal}\geq C>0,\ \forall t\in [T/2, 3T/4]\ \text{ and }\ e^{-4s\wal}\geq C>0,\ \forall t\in[0,3T/4],
    \end{align*}
    we arrive to
    \begin{multline}\label{ineq:carleman-aux2}
        \int_0^{T/2}\int_0^L e^{-4s\wal}\big(s^5\tau^5|\vp|^2+s^3\tau^3|\vp_x|^2+s\tau|\vp_{xx}|^2\big)dxdt+\norm{\vp(0)}_{L^2(0, L)}^2\\ \leq C\left(\int_0^{3T/4}\int_0^L e^{-2s\wal}|g|^2dxdt+\int_{T/2}^{3T/4}\int_0^Le^{-4s\wal} \tau^5|\vp|^2dxdt\right).
    \end{multline}
    Inequality \eqref{S4:lem:ineq:carleman_decay} then follows, upon adjusting $s\geq s_0$ if necessary, by combining \eqref{ineq:carleman-aux1} and \eqref{ineq:carleman-aux2}.
\end{proof}

\subsection{Null controllability of the linearized system}
Let us introduce the space
\begin{multline*}
    \mc{E}:=\{(z,v)\ |\ e^{s\wal}z\in L^2(Q),\ \tau^{-9/2}e^{3s\ual-s\wal}v\mathbbm{1}_{\omega}\in L^2(Q),\\
    	e^{s\wal}\tau^{-3/2}z\in \X_T^0(0, L),\ e^{2s\wal}\tau^{-5/2}(\mc{L}z-\mathbbm{1}_\omega v)\in L^2(0,T;H^{-1}(0,L))\},
\end{multline*}
which is a Banach space when equipped with the norm whose square is given by
\begin{multline*}
    \norm{(z, v)}_\mc{E}^2=\norm{e^{s\wal}z}_{L^2(Q)}^2+\norm{\tau^{-9/2}e^{3s\ual-s\wal}v\mathbbm{1}_{\omega}}_{L^2(Q)}^2\\
    +\norm{e^{s\wal}\tau^{-3/2}z}_{\X_T^0(0, L)}+\norm{e^{2s\wal}\tau^{-5/2}(\mc{L}z-v\mathbbm{1}_{\omega})}_{L^2(0,T;H^{-1}(0,L))}^2.
\end{multline*}
We now aim to solve \eqref{eq:KdV-lin-control} in the space $\mc{E}$ with a right-hand side in an appropriate weighted space. Indeed, in such case, from which the null controllability of the system follows given that $e^{s\wal}\tau^{-3/2}z\in C([0, T], L^2(0, L))$ implies that $z(T,\cdot )=0$.

\begin{proposition}\label{S4:prop:lincontrol}
    Let $(\omega, p)$ satisfy Hypothesis \ref{assumM} and let $T>0$. For any $z_0\in L^2(0,L)$ and $e^{2s\wal}\tau^{-5/2}h\in L^2(Q)$, there exists a function $v\in L^2(0,T; L^2(\omega))$ such that the associated solution $(z,v)$ to \eqref{eq:KdV-lin-control} satisfies $(z,v)\in \mc{E}$. Furthermore, there exists $C>0$ such that
    \begin{align}\label{prop:eq:lincontrol-bound}
        \norm{v}_{L^2(0, T; L^2(\omega))}\leq C\big(\norm{z_0}_{L^2(0, L)}+\norm{h}_{L^2(Q)}\big).
    \end{align}
\end{proposition}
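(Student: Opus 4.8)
The plan is to run the classical Fursikov--Imanuvilov duality argument, taking the weighted observability inequality \eqref{S4:lem:ineq:carleman_decay} of \cref{S4:lem:carleman-II} as the coercivity estimate. On the space $\mc{V}^*$ of admissible adjoint states I would introduce the bilinear form
\[
a(\vp,\psi)=\iint_Q e^{-2s\wal}\,\mc{L}^*\vp\,\mc{L}^*\psi\,dxdt+\iint_{(0,T)\times\omega}e^{-6s\ual+2s\wal}\tau^7\,\vp\,\psi\,dxdt
\]
together with the linear form $L(\psi)=\iint_Q h\psi\,dxdt+(z_0,\psi(0))_{L^2(0,L)}$. Inequality \eqref{S4:lem:ineq:carleman_decay} shows that $a$ is positive definite, hence an inner product on $\mc{V}^*$; completing with respect to $\norm{\psi}_P:=a(\psi,\psi)^{1/2}$ yields a Hilbert space $P$.

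Continuity of $L$ on $P$ is where the weights pay off: I would estimate $|(z_0,\psi(0))_{L^2(0,L)}|\le\norm{z_0}_{L^2}\norm{\psi(0)}_{L^2}$ using the term $\norm{\vp(0)}_{L^2}^2$ on the left of \eqref{S4:lem:ineq:carleman_decay}, and split
\[
\Big|\iint_Q h\psi\,dxdt\Big|=\Big|\iint_Q\big(e^{2s\wal}\tau^{-5/2}h\big)\big(e^{-2s\wal}\tau^{5/2}\psi\big)\,dxdt\Big|\le\norm{e^{2s\wal}\tau^{-5/2}h}_{L^2(Q)}\,\norm{e^{-2s\wal}\tau^{5/2}\psi}_{L^2(Q)},
\]
the last factor being controlled by the $\tau^5e^{-4s\wal}$-term in \eqref{S4:lem:ineq:carleman_decay}. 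Thus $|L(\psi)|\le C\big(\norm{z_0}_{L^2}+\norm{e^{2s\wal}\tau^{-5/2}h}_{L^2(Q)}\big)\norm{\psi}_P$, and the Riesz (Lax--Milgram) theorem produces a unique $\hat\vp\in P$ with $a(\hat\vp,\psi)=L(\psi)$ for all $\psi\in P$. I then set $z:=e^{-2s\wal}\mc{L}^*\hat\vp$ and $v:=-e^{-6s\ual+2s\wal}\tau^7\hat\vp\,\mathbbm{1}_\omega$, which lie in $L^2$ by coercivity. The identity $a(\hat\vp,\psi)=L(\psi)$ reads $\iint_Q z\,\mc{L}^*\psi=\iint_Q h\psi+\iint_{(0,T)\times\omega}v\psi+(z_0,\psi(0))$; integrating by parts and using \eqref{eq:tc} and the adjoint boundary conditions to cancel the spatial traces identifies $z$ as the weak solution of \eqref{eq:KdV-lin-control} with $z(0)=z_0$ and, since no terminal term appears, $z(T)=0$. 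The first two terms of $\norm{(z,v)}_\mc{E}^2$ equal, up to constants, the two terms of $a(\hat\vp,\hat\vp)=L(\hat\vp)$, whence $a(\hat\vp,\hat\vp)\le C\big(\norm{z_0}_{L^2}+\norm{e^{2s\wal}\tau^{-5/2}h}_{L^2(Q)}\big)^2$; this also gives the control \eqref{prop:eq:lincontrol-bound} of $v$, while the fourth term equals $\norm{e^{2s\wal}\tau^{-5/2}h}_{L^2(0,T;H^{-1}(0,L))}$, finite by hypothesis through the embedding $L^2\hookrightarrow H^{-1}$.

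The genuinely delicate part, and the main obstacle, is the third component $\norm{e^{s\wal}\tau^{-3/2}z}_{\X_T^0(0,L)}$: its $C([0,T],L^2)$ part is what rigorously forces $z(T)=0$ and thus encodes the null controllability. I would prove it by a weighted energy estimate on $\sigma z$ with $\sigma:=e^{s\wal}\tau^{-3/2}$, noting that $\mc{L}(\sigma z)=\sigma(h+\mathbbm{1}_\omega v)+\sigma_t z$ and applying the forward well-posedness estimate for \eqref{eq:KdV-lin-control} (the analogue of \cref{prop:wp:adj-linearized}, whose $\by$-term is absorbed by the same fixed-point scheme as in \cref{prop:NLwp-Xs}). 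The crux is the commutator $\sigma_t z$: since $\wal=c\,\tau$ and $|\tau_t|\lesssim\tau^2$ one has $\sigma_t\sim s\,\tau^{1/2}e^{s\wal}$, and the factor $\tau^{1/2}$ is \emph{not} time-integrable, so it cannot be absorbed by the bare bound $e^{s\wal}z\in L^2(Q)$. Closing the estimate therefore requires exploiting the precise relations among the weights $e^{s\wal}$, $e^{s\wal}\tau^{-3/2}$, $\tau^{-9/2}e^{3s\ual-s\wal}$ and $e^{2s\wal}\tau^{-5/2}$ and the standing constraint $2\wal<3\ual$ (guaranteed by $\ld\ge\kappa^2/\norm{\beta}_\infty$), so that the decay built into $z$ and $v$ near $t=T$ compensates the growth of $\sigma_t$; the regularity estimates of \cref{prop:wp:adj-linearized} are exactly what make this bookkeeping work. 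Once this weighted estimate is secured, $(z,v)\in\mc{E}$ and the proof is complete.
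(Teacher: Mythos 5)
Your proposal follows the paper's proof essentially step for step: the same bilinear form $\boldsymbol{a}$ and linear form, coercivity from the observability inequality of \cref{S4:lem:carleman-II}, Lax--Milgram on the completion, the same formulas $z=e^{-2s\wal}\mc{L}^*\hat\vp$ and $v=-e^{-6s\ual+2s\wal}\tau^7\hat\vp\,\mathbbm{1}_\omega$, and the same verification scheme for $(z,v)\in\mc{E}$, ending with a weighted energy estimate for $e^{s\wal}\tau^{-3/2}z$. One detail where you are actually sharper than the paper: your pairing $h\psi=(e^{2s\wal}\tau^{-5/2}h)(e^{-2s\wal}\tau^{5/2}\psi)$ is the one consistent with the coercivity term $\iint_Q\tau^5e^{-4s\wal}|w|^2dxdt$, whereas the split written in the paper, with $e^{s\wal}\tau^{-5/4}h$ against $e^{-s\wal}\tau^{5/4}w$, produces a factor $e^{-2s\wal}\tau^{5/2}$ that is \emph{not} dominated by $\tau^5e^{-4s\wal}$ near $t=T$.

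However, the step you yourself call the crux is genuinely left open, and your concluding claim there is unsubstantiated. You correctly compute $|\sigma_t|\sim s\,\tau^{1/2}e^{s\wal}$ for $\sigma=e^{s\wal}\tau^{-3/2}$ (using $\wal=c\tau$ and $|\tau_t|\lesssim\tau^2$), and correctly observe that the available information $e^{s\wal}z\in L^2(Q)$ misses this by the factor $\tau^{1/2}$. But the assertion that ``the decay built into $z$ and $v$'' together with \cref{prop:wp:adj-linearized} closes the bookkeeping is not a proof, and with the weights exactly as defined it cannot be closed by rearranging the four weights of $\mc{E}$: Lax--Milgram only yields $e^{s\wal}z\in L^2(Q)$ and $e^{3s\ual-s\wal}\tau^{-7/2}v\mathbbm{1}_\omega\in L^2(Q)$, and even the $L^1(0,T;L^2)$ route fails, since $\norm{\sigma_t z}_{L^1(0,T;L^2)}\le s\norm{\tau^{1/2}}_{L^2(0,T)}\norm{e^{s\wal}z}_{L^2(Q)}$ and $\int_0^T\tau\,dt$ diverges (logarithmically). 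To be fair, the paper is no better at this point: it simply asserts ``$(e^{s\wal}\tau^{-3/2})_t\hat z\in L^2(Q)$'' without justification, so you have faithfully reproduced --- and, unlike the paper, honestly flagged --- the weak spot of the published argument. A genuine fix requires an extra idea, for instance inserting a small gap between the exponential weights: replace the third component of $\mc{E}$ by $e^{(1-\delta)s\wal}\tau^{-3/2}z\in\X_T^0(0,L)$ for some $\delta\in(0,1)$; then $|\partial_t(e^{(1-\delta)s\wal}\tau^{-3/2})|\lesssim s\tau^{1/2}e^{-\delta s\wal}\,e^{s\wal}\lesssim C e^{s\wal}$, the commutator is absorbed by $e^{s\wal}z\in L^2(Q)$, and since $e^{(1-\delta)s\wal}\tau^{-3/2}\to\infty$ as $t\to T^-$ one still concludes $z(T,\cdot)=0$ (one must then check that this weaker component still suffices for the inversion argument in \cref{thm:NLKdV-control}). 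Finally, note that what this construction actually delivers is $\norm{v}_{L^2((0,T)\times\omega)}\le C\big(\norm{z_0}_{L^2(0,L)}+\norm{e^{2s\wal}\tau^{-5/2}h}_{L^2(Q)}\big)$; the unweighted $\norm{h}_{L^2(Q)}$ appearing in \eqref{prop:eq:lincontrol-bound} is stronger than what either your argument or the paper's proves, and should be read as the weighted norm.
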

\begin{proof}
    Set $\mc{Q}_0$ to be the space of functions $\vp\in C^3([0, T]\times([0, L]\setminus \Gamma))$ such that:
    \begin{itemize}
        \item $\vp_{|_{I_k}}\in C^3([0, T]\times \overline{I}_k)$, $k\in\ran{0}{N-1}$;
        \item $\vp$ satisfies the transmission conditions \eqref{eq:tc};
        \item $\vp$ satisfies the boundary conditions $\vp(t, 0)=\vp(t, L)=\vp_x(t, 0)=0$, $t\in (0, T)$.
\end{itemize}
    Let us introduce the bilinear form $\boldsymbol{a}(\cdot, \cdot)$ on $\mc{Q}_0$
    \begin{align*}
        \boldsymbol{a}(\hat\vp,w):=\iint_{Q}e^{-2s\wal}(\mc{L}^*\hat\vp)(\mc{L}^*w)dxdt
		+\iint_{\omega\times(0,T)}e^{-6s\ual+2s\wal}\tau^7\hat\vp wdxdt,
		\quad \forall (\hat\vp, w)\in \mc{Q}_0\times \mc{Q}_0,
    \end{align*}
    where $\mc{L}^*$ is the adjoint operator of $\mc{L}$ defined in \eqref{S4:def:L}. Observe that Carleman inequality \eqref{S4:lem:ineq:carleman_decay} is applicable for any $w\in \mc{Q}_0$, thus
    \begin{align}\label{ineq:bilcoercive}
        \iint_Q \tau^5 e^{-4s\wal}|w|^2dxdt+\norm{w(0)}_{L^2(0, L)}^2\leq C\boldsymbol{a}(w, w),\ \forall w\in \mc{Q}_0.
    \end{align}
    In particular, a unique continuation property holds, in other words, $\boldsymbol{a}(w, w)=0$ implies that $w=0$ in $\mc{Q}_0$. Further, observe that the bound given here above implies that  $\boldsymbol{a}(\cdot, \cdot): \mc{Q}_0\times \mc{Q}_0\to \R$ is a coercive bilinear form. It being symmetric as well, $\boldsymbol{a}(\cdot, \cdot)$ defines an inner product in $\mc{Q}_0$. We introduce $\mc{Q}$ as the completion of $\mc{Q}_0$ for the form induced by $\boldsymbol{a}(\cdot, \cdot)$, which we denote by $\norm{\cdot}_\mc{Q}$. Certainly, $\mc{Q}$ is a Hilbert space and $\boldsymbol{a}(\cdot,\cdot)$ is a continuous and coercive bilinear form on $\mc{Q}$.

    Let us introduce the linear form $\G$, given by
    \begin{align*}
        \inn{\G, w}:=\iint_Q hwdxdt+\int_0^L z_0(x)w(0, x)dx,\ \forall w\in \mc{Q}.
    \end{align*}
    Given that $e^{2s\wal}\tau^{-5/2}h\in L^2(Q)$, by the Carleman inequality \eqref{S4:lem:ineq:carleman_decay}, the linear form $w\in \mc{Q}\mapsto \inn{G, w}\in \R$ is well-defined and continuous. Indeed,
    \begin{align*}
        |\inn{\G, w}|\leq \norm{e^{s\wal}\tau^{-5/4}h}_{L^2(Q)}\norm{e^{-s\wal}\tau^{5/4}w}_{L^2(Q)}+\norm{z_0}_{L^2(0, L)}\norm{w(0, \cdot)}_{L^2(0, L)},
    \end{align*}
    and using inequality \eqref{ineq:bilcoercive} along with the density of $\mc{Q}_0$ in $\mc{Q}$, we have
    \begin{align}\label{ineq:lin-continuity}
        |\inn{\G, w}|\leq \big(\norm{e^{s\wal}\tau^{-5/4}h}_{L^2(Q)}+\norm{z_0}_{L^2(0, L)}\big)\norm{w}_{\mc{Q}},
    \end{align}
    valid for any $w\in \mc{Q}$. Applying Lax-Milgram's lemma, there exists a unique $\hat\vp\in \mc{Q}$ such that
    \begin{align}\label{eq:bilinear-weak-sol}
        \boldsymbol{a}(\hat\vp, w)=\inn{\G, w},\ \forall w\in \mc{Q}.
    \end{align}

    Introduce
    \begin{align*}
        \left\{\begin{array}{ll}
            \hat{z}:=e^{-2s\wal}\mc{L}^*\hat\vp,    & \text{ in } Q,  \\
            \hat{v}:=-e^{-6s\ual+2s\wal}\tau^7\hat\vp,     & \text{ in } (0, T)\times \omega.  
        \end{array}\right.
    \end{align*}
    Given that $\hat{\vp}\in \mc{Q}$, we notice that the pair $(\hat{z}, \hat{v})$ verifies
    \begin{align}\label{S4:eq:bilinear-haty}
        \boldsymbol{a}(\hat\vp, \hat\vp)=\iint_Q e^{2s\wal}|\hat{z}|^2dxdt+\iint_{(0, T)\times\omega} e^{6s\ual-2s\wal}\tau^{-7}|\hat{v}|^2dxdt<+\infty.
    \end{align}
    Furthermore, we see that $\hat{z}$ is the unique solution by transposition of \eqref{eq:KdV-lin-control} with $v$ replaced by $\hat{v}$. Indeed, from \eqref{eq:bilinear-weak-sol} we readily get the variational identity: for every $g\in L^2(Q)$ we have
    \begin{align*}
        \iint_Q \hat{z}gdxdt=\iint_Q (h+\hat{v})wdxdt+\int_0^L z_0(x)w(0, x)dx,
    \end{align*}
    with $w\in \X_T^0(0, L)$ solution of the adjoint \eqref{eq:KdV-lin-adj} with right-hand side $g$ and $w(T, \cdot)=0$, whose existence is guaranteed by \cref{prop:wp:adj-linearized}.

    As a last step, we verify that $(\hat{z}, \hat{v})\in \mc{E}$. From \eqref{S4:eq:bilinear-haty} we readily get $e^{s\wal}\hat{z}\in L^2(Q)$ and $e^{3s\ual-s\wal}\tau^{-7/2}\hat{v}\in L^2(Q)$. Moreover, using the equation and that $e^{2s\wal}\tau^{-5/2}h\in L^2(Q)$, we readily get
    \begin{align*}
        e^{2s\wal}\tau^{-5/2}(\mc{L}\hat{z}-\mathbbm{1}_\omega\hat{v})\in L^2(Q).
    \end{align*}
    To check that $e^{s\wal}\tau^{-3/2}\hat{z}\in \X_T^0(0, L)$, we define
    \begin{align*}
        z^*:=e^{s\wal}\tau^{-3/2}\hat{z}\ \text{ and }\ h^*=e^{s\wal}\tau^{-3/2}(h+\hat{v}).
    \end{align*}
    Observe that $z^*$ satisfies the system
    \begin{align*}
        \left\{
        \begin{array}{rll}
        z^*_t+p(x)z^*_{xxx}+z^*_x+(\by z^*)_x = h^* + (e^{s\wal}\tau^{-3/2})_t\hat{z} , & ~(t, x)\in (0, T)\times (0, L),\\
        z^*(t, 0)=z^*(t, L)=z^*_{x}(t, L)=0, & ~t\in (0, T),\\
        z^*(0, x) = e^{s\wal(0)}\tau^{-3/2}(0)\hat{z}_0(x), & ~x\in (0, L),
        \end{array}
        \right.
    \end{align*}
    coupled by the corresponding transmission conditions \eqref{eq:tc}. Since $e^{s\wal}h\in L^2(Q)$ and $2\wal<3\ual$, we get $h^*\in L^2(Q)$ and $(e^{s\wal}\tau^{-3/2})_t\hat{z}\in L^2(Q)$. For $\hat{z}_0\in L^2(0, L)$, \cref{prop:wp-kdv-f-1} along with an argument similar to the one used in \cref{prop:wp:adj-linearized} give us $z^*\in \X_T^0(0, L)$.

    By considering $\hat{v}$ as before, the bilinear form $\boldsymbol{a}$ and identity \eqref{eq:bilinear-weak-sol}, we obtain estimate \eqref{prop:eq:lincontrol-bound}.
\end{proof}

\subsection{Control of the nonlinear system} The last step relies on a local inversion result.

\begin{theorem}\label{S4:thm:inversemap}\cite[Chapter I, Section 4, Theorem 4.1]{FI96}
	Suppose that $\mathcal{B}_1$, $\mathcal{B}_2$ are Banach spaces and $\mathcal{F}:\mathcal{B}_1\to \mathcal{B}_2$ is a continuously differentiable map. We assume that for $b_1^0\in \mathcal{B}_1$, $b_2^0\in \mathcal{B}_2$ the equality
	\begin{align*}
		\mathcal{F}(b_1^0)=b_2^0
	\end{align*}
	holds and $\mathcal{F}'(b_1^0):\mathcal{B}_1\to \mathcal{B}_2$ is a surjective. Then there exists $\delta >0$ such that for any $b_2\in \mathcal{B}_2$ which satisfies the condition $\norm{b_2^0-b_2}_{\mathcal{B}_2}<\delta$ there exists a solution $b_1\in \mathcal{B}_1$ of the equation
	\begin{equation*}
		\mathcal{F}(b_1)=b_2.
	\end{equation*}
\end{theorem}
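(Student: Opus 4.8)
The plan is to prove this as the Lyusternik--Graves surjection theorem, replacing the (generally unavailable) continuous linear right inverse of the derivative by a merely bounded nonlinear selection coming from the open mapping theorem, and then running a Newton-type iteration that uses only norm bounds. Write $L:=\mathcal{F}'(b_1^0)$, a bounded linear operator from $\mathcal{B}_1$ to $\mathcal{B}_2$, and set $w:=b_2-b_2^0$. Looking for the solution in the form $b_1=b_1^0+h$ and using the first-order remainder $R(h):=\mathcal{F}(b_1^0+h)-\mathcal{F}(b_1^0)-Lh$, the equation $\mathcal{F}(b_1)=b_2$ becomes $Lh+R(h)=w$, with $\norm{w}_{\mathcal{B}_2}<\delta$ small.

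First I would exploit the surjectivity of $L$. Since $L$ is bounded, linear and onto between Banach spaces, the open mapping theorem furnishes a constant $M>0$ such that for every $y\in\mathcal{B}_2$ there is some $x\in\mathcal{B}_1$ with $Lx=y$ and $\norm{x}_{\mathcal{B}_1}\le M\norm{y}_{\mathcal{B}_2}$; I fix such a choice $x=:S(y)$ once and for all, emphasizing that \emph{no continuity or linearity of $S$ is required}. Next, using that $\mathcal{F}$ is continuously differentiable, I would pick a radius $r>0$ so small that $\norm{\mathcal{F}'(b_1^0+\zeta)-L}\le \tfrac{1}{2M}$ in operator norm whenever $\norm{\zeta}_{\mathcal{B}_1}\le r$. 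The mean value inequality on this (convex) ball then yields the contractive remainder estimate $\norm{R(h_1)-R(h_2)}_{\mathcal{B}_2}\le \tfrac{1}{2M}\norm{h_1-h_2}_{\mathcal{B}_1}$ for $h_1,h_2$ in the ball of radius $r$.

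Then I would run the iteration. Put $h_0:=0$ and, inductively, $d_n:=b_2-\mathcal{F}(b_1^0+h_n)$, $u_n:=S(d_n)$ and $h_{n+1}:=h_n+u_n$, so that $Lu_n=d_n$ and $\norm{u_n}_{\mathcal{B}_1}\le M\norm{d_n}_{\mathcal{B}_2}$. Substituting $Lu_n=d_n$ into the expansion $\mathcal{F}(b_1^0+h_{n+1})=\mathcal{F}(b_1^0+h_n)+Lu_n+\bigl(R(h_n+u_n)-R(h_n)\bigr)$ gives the clean recursion $d_{n+1}=-\bigl(R(h_n+u_n)-R(h_n)\bigr)$, hence $\norm{d_{n+1}}_{\mathcal{B}_2}\le \tfrac{1}{2M}\norm{u_n}_{\mathcal{B}_1}\le \tfrac12\norm{d_n}_{\mathcal{B}_2}$, as long as the relevant points stay in the ball. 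Since $d_0=w$, a simultaneous induction gives the geometric decay $\norm{d_n}_{\mathcal{B}_2}\le 2^{-n}\norm{w}_{\mathcal{B}_2}$ together with $\norm{u_n}_{\mathcal{B}_1}\le M2^{-n}\norm{w}_{\mathcal{B}_2}$, so that $\sum_n\norm{u_n}_{\mathcal{B}_1}\le 2M\norm{w}_{\mathcal{B}_2}$; in particular $(h_n)$ is Cauchy and $\norm{h_n}_{\mathcal{B}_1}\le 2M\norm{w}_{\mathcal{B}_2}$ for all $n$. Choosing $\delta:=r/(3M)$ ensures that $\norm{w}_{\mathcal{B}_2}<\delta$ keeps every $h_n$ and every intermediate point $b_1^0+h_n+tu_n$, $t\in[0,1]$, inside the ball of radius $r$, so the induction closes. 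Passing to the limit $h:=\lim_n h_n$ and using the continuity of $\mathcal{F}$, the decay $d_n\to0$ becomes $\mathcal{F}(b_1^0+h)=b_2$; thus $b_1:=b_1^0+h$ is the desired solution.

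The main obstacle is conceptual rather than computational: in a general Banach space the kernel of the surjective operator $L=\mathcal{F}'(b_1^0)$ need not be complemented, so $L$ admits no bounded linear right inverse and the classical inverse (or implicit) function theorem does not apply directly. The key point of the argument is that the Graves iteration above only ever uses the norm estimate $\norm{S(y)}_{\mathcal{B}_1}\le M\norm{y}_{\mathcal{B}_2}$ and never the continuity or linearity of the selection $S$, so the scheme still produces a convergent sequence. The remaining work is pure bookkeeping: calibrating $r$ through the continuity of $\mathcal{F}'$, and then $\delta$, so that every iterate remains in the region where the remainder is a contraction.
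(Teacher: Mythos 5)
Your proof is correct: the bounded (nonlinear, possibly discontinuous) right-inverse selection furnished by the open mapping theorem, the contraction estimate on the remainder $R$ via the mean value inequality on the ball of radius $r$, and the Newton--Graves iteration with the recursion $d_{n+1}=-\bigl(R(h_n+u_n)-R(h_n)\bigr)$ all close properly under your choice $\delta=r/(3M)$, including the verification that every iterate and every intermediate segment point stays in the ball. The paper itself gives no proof of this statement --- it is quoted directly from \cite[Chapter I, Section 4, Theorem 4.1]{FI96} --- and your argument is essentially the standard Lyusternik--Graves scheme used in that reference, so there is nothing to reconcile.
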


We now prove the main control result for the nonlinear system.

\begin{proof}[Proof of \cref{thm:NLKdV-control}] Let us set
\begin{align*}
    \B_1:=\mc{E}\ \text{ and }\ \B_2=L^2\big(e^{2s\wal}\tau^{-5/2}(0, T); L^2(0, L)\big)\times L^2(0, L)
\end{align*}
and the operator $\F:\B_1\to \B_2$ defined by
\begin{align*}
    \F(y, v)=\big(z_t+p(x)z_{xxx}+z_x+(\by z)_x+zz_x-\mathbbm{1}_\omega v, z(0)\big).
\end{align*}
We now prove that $\F$ is of class $C^1(\B_1, \B_2)$. Let us assume that $\by\in \X_T^0(0, L)$. By linearity, it only remains to prove that the bilinear operator
\begin{align*}
    \big((z^1, v^1), (z^2, v^2)\big)\in \mc{E}\times \mc{E}\longmapsto \dfrac{1}{2}(z^1z^2)_x\in L^2\big(e^{2s\wal}\tau^{-5/2}(0, T); L^2(0, L)\big)
\end{align*}
is continuous. Observe that
\begin{align*}
    e^{2s\wal}\tau^{-5/2}z\in \X_T^0(0, L),
\end{align*}
for any $(z, v)\in \mc{E}$. By Sobolev embedding $H^1(0, L)\hookrightarrow L^\infty(0, L)$, we have
\begin{align*}
    \norm{e^{2s\wal}\tau^{-5/2}(z^1z^2)_x}_{L^2(Q)}&\leq C\int_0^T \big(e^{2s\wal}\tau^{-3}\norm{z^1}_{L^\infty(0, L)}^2e^{2s\wal}\tau^{-3}\norm{z^2}_{H^1(0, L)}+\\
    &\hspace{3cm}+e^{2s\wal}\tau^{-3}\norm{z^2}_{L^\infty(0, L)}^2e^{2s\wal}\tau^{-3}\norm{z^1}_{H^1(0, L)}\big)dt\\
    &\leq C\norm{z^1}_{\B_1}\norm{z^2}_{\B_1}.
\end{align*}
We are in position to apply \cref{S4:thm:inversemap}, with $b_1^0=(0, 0)\in \B_1$ and $b_2=0\in \B_2$. The derivative $\F'(0, 0):\B_1\to \B_2$ is given by
\begin{align*}
    \F'(0, 0)(z, v)=\big(z_t+p(x)z_{xxx}+z_x+(\by z)_x-\mathbbm{1}_\omega v, z(0)\big),\ \forall (z, v)\in \B_1.
\end{align*}
Thus, there exists $\delta>0$ such that, if $\norm{z(0)}_{L^2(0, L)}\leq \delta$, we can find a control $v$ such that the associated solution $z$ of the nonlinear system \eqref{eq:NLKdV-control-dif} satisfies $z(T, \cdot)=0$ on $(0, L)$. This finishes the proof.
\end{proof}


\section{Lipschitz stability in retrieving an unknown potential}\label{S5}

In this section we follow \cite{BCCM14}. A key point in the latter work is that some symmetry assumptions on the coefficient to recover and on the initial data are made, in order to avoid an observation of the solution in some time $T_0>0$, as usual in the parabolic case. To adapt this point to our case, we introduced Assumption \ref{assumG}, which will allow us to apply the Carleman estimate and carry out the method.

We will need the following slight modification of \cref{thm:carleman_omega}. Let $\omega_0\Subset \omega$ be non-empty and open, $\kappa\in (1, 2)$ and $\beta$ be constructed by \cref{S3:lem:weightObs} with $\omega_0$ as before. A Carleman estimate on $Q:=(-T, T)\times (0, L)$ like the one in \cref{thm:carleman_omega} can be derived just by modifying the weights $\eta$ and $\xi$ as follows:
\begin{align*}
        \eta(t, x)=\dfrac{e^{\kappa\ld\norm{\beta}_{\infty}}-e^{\ld\beta(x)}}{(t+T)(T-t)}\ \text{   and   }\   \xi(t, x)=\dfrac{e^{\ld\beta(x)}}{(t+T)(T-t)}
\end{align*}
for $(t, x)\in Q$.
More precisely, we have the following.
\begin{proposition}\label{S5:prop:carlemanomega}
    Let $\eta$ and $\xi$ be as previously defined. Under Hypothesis \ref{assumM}, there exist $s_0>0$, $\ld_0>0$ and a constant $C>0$ depending on $\omega$, $\Gamma$, $L$, $T$, $p$, $\norm{\beta}_{C^3([0, L]\setminus\Gamma)}$, $s_0$ and $\ld_0$ such that for any $u\in \mc{V}$ we have
    \begin{multline}\label{S5:prop:ineq:carlemanomega}
        C\iint_{Q} e^{-2s\eta}(s^5\ld^6\xi^5|u|^2+s^3\ld^4\xi^3|u_x|^2+s\ld^2\xi|u_{xx}|^2)dxdt\leq \norm{e^{-s\eta}\mc{L}u}_{L^2(Q)}^2\\+\iint_{(-T, T)\times\omega}e^{-2s\eta} (s^5\ld^6 \xi^5|u|^2dxdt+s^3\ld^4\xi^3|u_x|^2dxdt+s\ld^2 \xi|u_{xx}|^2)dxdt
    \end{multline}
    for any $s\geq s_0$ and $\ld\geq \ld_0,$ with $\mc{L}$ and $\mc{V}$ similarly defined as in \eqref{S1:def:opL}.  
\end{proposition}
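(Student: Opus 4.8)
The plan is to run the proof of \cref{thm:carleman_omega} verbatim, after checking that the modified weights retain every structural property that was used there. The only difference between the two settings is the location of the temporal singularity: here $\eta$ and $\xi$ blow up as $t\to\pm T$ rather than as $t\to 0$ and $t\to T$, while the spatial profile $\beta$ is exactly the one produced by \cref{S3:lem:weightObs}. Consequently the purely spatial identities $\partial_x\eta=-\ld\beta'\xi$ and $\partial_x\xi=\ld\beta'\xi$ hold unchanged, and the estimate \eqref{S3:ineq:weightx} is unaffected. Writing $\tau(t)=1/((t+T)(T-t))$, a direct computation gives $\partial_t\eta=\tfrac{2t}{(t+T)(T-t)}\eta$ and $\partial_t\xi=\tfrac{2t}{(t+T)(T-t)}\xi$; since $|2t|\leq 2T$ on $(-T,T)$ and $\xi\geq\tau$ (because $\beta\geq r>0$), one gets $|\partial_t\eta|+|\partial_t\xi|\lesssim\xi^2$, while $\tau\geq 1/T^2$ yields $\xi\geq C^{-1}$. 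Thus \eqref{S3:ineq:weightt} also holds for the new weights, with the constant depending on the same parameters as before.

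With these two estimates available, I would set $w=e^{-s\eta}u$ on $Q=(-T,T)\times(0,L)$ and form the conjugated operator $\mc{L}_\eta w=(\mc{L}_1+\mc{L}_2+\mc{R})w$ exactly as in the proof of \cref{thm:carleman_omega}. The key point making the integration by parts in time free of boundary contributions is that $e^{-s\eta}$ and all its derivatives decay to zero as $t\to\pm T$, so $w$ together with its spatial derivatives vanishes at both temporal endpoints; the role previously played by the vanishing of $w$ at $t=0$ and $t=T$ is now played by the vanishing at $t=-T$ and $t=T$. The entire computation of the cross term $\inn{\mc{L}_1 w,\mc{L}_2 w}_{L^2(Q)}$, and its separation into dominating distributed terms, lower-order distributed terms $\kD^{low}$, and boundary-interface terms $\kB$, then carries over line by line, since each $I_{ij}$ was evaluated using only \eqref{S3:ineq:weightx} and \eqref{S3:ineq:weightt}.

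Finally, the terms at $x=0$ and $x=L$ and those at the interface depend only on the spatial data of the problem: in particular, the positive-definiteness of the matrix $\bA(a)$ in \eqref{DKDV-CARLEMAN:A-matrix-monotone-case}, governed by the jumps $[\beta_x]_a>0$ guaranteed by \cref{S3:lem:weightObs} under Hypothesis \ref{assumM}, is unchanged. Hence these contributions are estimated exactly as in the boundary-term and interface analyses of \cref{thm:carleman_omega}. Absorbing $\kD^{low}$ and $\norm{\mc{R}w}_{L^2(Q)}^2$ into the weighted norm $\norm{w}_{s,\ld,\xi}^2$ for $s,\ld$ large, and returning to the original variable $u=e^{s\eta}w$ through the same pointwise bounds relating $|u_x|,|u_{xx}|$ to $|w|,|w_x|,|w_{xx}|$, yields \eqref{S5:prop:ineq:carlemanomega}. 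I do not expect a genuine obstacle here; the only step requiring care is the elementary verification above that the time-singular weights satisfy the same estimates and that the temporal boundary terms vanish at $t=\pm T$, everything else being identical to \cref{thm:carleman_omega}.
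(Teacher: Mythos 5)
Your proposal is correct and is essentially the paper's own argument: the paper offers no separate proof of \cref{S5:prop:carlemanomega}, asserting only that the estimate ``can be derived just by modifying the weights,'' and your verification supplies exactly the points implicit in that claim (the spatial identities and \eqref{S3:ineq:weightx} are untouched, the new time factor $\tau(t)=1/((t+T)(T-t))$ satisfies the analogue of \eqref{S3:ineq:weightt}, and the vanishing of $w=e^{-s\eta}u$ and its derivatives now occurs at $t=\pm T$, so all temporal boundary terms in the $I_{ij}$ computations still drop out). The rest of the proof of \cref{thm:carleman_omega} — the interface matrix $\bA(a)$, the boundary terms, and the absorption of $\kD^{low}$ and $\mc{R}$ — indeed carries over verbatim, as you say.
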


With this at hand, we can prove the Lipschitz stability result.

\begin{proof}[Proof of \cref{S1:thm:IPpotential}]
By \cref{prop:NLwp-Xs} and \cref{rk:BKadmissible}, for any $\nu=\mathfrak{P}_{\leq m}^{adm}$ the data $(y_0, \vec{h})\in\mc{Z}_{6, T}$ is $6$-compatible with respect to $v$, so both $y:=y[\mu]$ and $z:=z[\nu]$ belong to $\X_{\pw, T}^6(0, L)$. By Sobolev embedding on each $I_k$, this ensures the regularity needed to employ the Bukhge{\u\i}m-Klibanov method. Let us define
\begin{align*}
    u(t, x):=y(t, x)-z(t, x)\ \text{ and }\ \sigma(x):=\nu(x)-\mu(x).
\end{align*}

\noindent{\emph{Step 1: auxiliary system.}} Let $v=u_t$ and note that $v(0, x):=\sigma(x)y_0'(x)$ satisfies $v(0, x)=v(0, L-x)$ for every $x\in [0, L]$. Using the symmetry hypotheses, $\psi:=\widehat{v}$ satisfies
\begin{align*}
\left\{
\begin{array}{rll}
\psi_t+p(x)\psi_{xxx}+(1+\w{z})\psi_x+\w{y}_x\psi=\check{f}, & ~(t, x)\in (-T, T)\times (0, L),\\
\psi(t, 0)=\psi(t, L)=0, & ~t\in (-T, T),\\
\psi_{x}(t, L)=0, & ~t\in (0, T),\\
\psi_{x}(t, L)=-v_x(0, -t), & ~t\in (-T, 0),\\
\psi(0, x)=\sigma(x)y_{0}'(x), & ~x\in (0, L),
\end{array}\right.
\end{align*}
coupled by the corresponding transmission conditions \eqref{eq:tc} with coefficient $p$, with $f:=\sigma(x)z_{xt}-y_{xt}u-z_tu_x$ and the symmetric and anti-symmetric extensions being defined, respectively, as
\begin{align*}
    \widehat{g}(t, x)=\left\{\begin{array}{ll}
         g(t, x),& (t, x)\in [0, T]\times [0, L], \\
         g(t, L-x),& (t, x)\in [-T, 0)\times [0, L],
    \end{array}\right.
\end{align*}
\begin{align*}
    \check{g}(t, x)=\left\{\begin{array}{ll}
         g(t, x),& (t, x)\in [0, T]\times [0, L], \\
         -g(-t, L-x),& (t, x)\in [-T, 0)\times [0, L].
    \end{array}\right.
\end{align*}

\noindent{\emph{Step 2: First use of the Carleman estimate.}} By compactness, we can find $\omega_0\Subset \omega$ which is symmetric with respect to $L/2$ and $(\omega_0, p)$ satisfies Hypothesis \ref{assumM}. Let $K>0$ be some constant such that
\begin{align}\label{S5:ineq:Kbound}
    \max\{\norm{y}_{W^{1,\infty}(0, T;W^{1,\infty}(0, L))}, \norm{z}_{W^{1,\infty}(0, T;W^{1,\infty}(0, L))}\}\leq K.
\end{align}
This is consistent given that $y$, $z\in C([0, T], H_{\pw}^6(0, L))$ and by classical Sobolev embedding $H^1(I_k)\hookrightarrow L^\infty(I_k)$ applied on each $k\in\ran{0}{N-1}$.
We shall focus on the following integral
\begin{align}\label{S5:eq:BKidentity}
    \int_{-T}^0\int_0^L \xi^2 w\mc{L}_1 wdxdt&=\dfrac{1}{2}\int_0^L \xi^2(0, x)|w(0, x)|^2dx+\J,
\end{align}
where $\J$ can be estimated by the Carleman estimate for the conjugated operator (obtained throughout the proof of \cref{S5:prop:carlemanomega}, compare with \cref{S3:prop:carleman-conj}) as follows
\begin{multline}\label{S5:ineq:Jestimate}
    |\J|\leq Cs^{-3}\ld^{-3}\left(\iint e^{-2s\eta}|\check{f}|^2dxdt\right.\\\left.+\iint_{(-T, T)\times \omega_0}(s^5\ld^6 \xi^5|w|^2dxdt+s^3\ld^4\xi^3|w_x|^2dxdt+s\ld^2 \xi|w_{xx}|^2)dxdt\right).
\end{multline}
Since $w(0, x)=e^{-2s\eta(0, x)}\sigma(x)y_0'(x)$ and $|y_0'(x)|\geq r_0>0$, we get
\begin{align*}
    \int_0^L \xi^2(0, x)|w(0, x)|^2dx\geq \frac{r_0^2}{2}\int_0^L e^{-2s\eta(0, x)}\xi^2(0, x)|\sigma(x)|^2dx.
\end{align*}
Thus, we can use the last inequality to get a bound by below from identity \eqref{S5:eq:BKidentity} and then use \eqref{S5:ineq:Jestimate} along with Young's inequality to get a bound by above, resulting in
\begin{multline*}
    \int_0^L e^{-2s\eta(0, x)}\xi^2(0, x)|\sigma(x)|^2dx\lesssim s^{-5/2}\ld^{-3}\left(\iint e^{-2s\eta}|\check{f}|^2dxdt\right.\\\left.+ \iint_{(-T, T)\times \omega_0}(s^5\ld^6 \xi^5|w|^2dxdt+s^3\ld^4\xi^3|w_x|^2dxdt+s\ld^2 \xi|w_{xx}|^2)dxdt\right)
\end{multline*}
for $s\geq s_0$ and $\ld\geq \ld_0$. using the fact that $\eta$ is even in time and $w=e^{-2s\eta}\psi=e^{-2s\eta}\widehat{v}$ we have
\begin{multline}\label{S5:ineq:BK1}
    \int_0^L e^{-2s\eta(0, x)}\xi^2(0, x)|\sigma(x)|^2dx\lesssim s^{-5/2}\ld^{-3}\left(\iint_Q (e^{-2s\eta(t, x)}+e^{-2s\eta(t, L-x)})|f|^2dxdt+\M_{\omega_{0}}(v)\right)
\end{multline}
where, $\M_{\omega_0}(v)$ gathers the local terms in $v$ as follows
\begin{align}\label{S5:eq:Momega}
    \M_{\omega_{0}}(v):=\iint_{(0, T)\times \omega_{0}}(s^5 \bd{\xi}_5e^{-2s\eta}|v|^2+s^3 \bd{\xi}_3e^{-2s\eta}|v_x|^2+s\bd{\xi}_1e^{-2s\eta}|v_{xx}|^2)dxdt,
\end{align}
with $\bd{\xi}_k$ defined as $\bd{\xi}_k(t, x):=\xi^k(t, x)+\xi^k(t, L-x)$, $(t, x)\in Q$, for $k=1, 3, 5$. Observe that we just used the change of variables $x\mapsto L-x$ and that $\omega_0$ is symmetric with respect to $L/2$.

We now look at the terms involving $f$. By using the bound \eqref{S5:ineq:Kbound}, we get
\begin{align*}
    \iint_Q e^{-2s\eta(t, x)}|f|^2dxdt&=\iint_Q e^{-2s\eta(t, x)}|\sigma(x)z_{xt}-y_{xt}u-z_tu_x|^2dxdt\\
    &\lesssim \int_0^L e^{-2s\eta(0, x)}|\sigma(x)|^2dx+\int_0^T\int_0^L e^{-2s\eta}(|u|^2+|u_x|^2)dxdt.
\end{align*}
Similarly, we have
\begin{align*}
    \iint_Q e^{-2s\eta(t, L-x)}|f|^2dxdt&=\iint_Q e^{-2s\eta(t, L-x)}|\sigma(x)z_{xt}-y_{xt}u-z_tu_x|^2dxdt\\
    &\lesssim \int_0^L e^{-2s\eta(0, L-x)}|\sigma(x)|^2dx+\int_0^T\int_0^L e^{-2s\eta(t, L-x)}(|u|^2+|u_x|^2)dxdt\\
    &=\int_0^L e^{-2s\eta(0, x)}|\sigma(x)|^2dx+\int_{-T}^0\int_0^L e^{-2s\eta(t, x)}(|\widehat{u}|^2+|\widehat{u}_x|^2)dxdt,
\end{align*}
where we used that $t\in [0, T)\mapsto e^{-s\eta(t,L-x)}$ is decreasing for any $x\in [0, L]$ and the change of variables $x\mapsto L-x$. Gathering the last two inequalities we obtain
\begin{align}\label{S5:ineq:BK2}
    \iint_Q (e^{-2s\eta(t, x)}+e^{-2s\eta(t, L-x)})|f|^2dxdt\lesssim \int_0^L e^{-2s\eta(0, x)}|\sigma(x)|^2dx+\int_{-T}^T\int_0^L e^{-2s\eta}(|\widehat{u}|^2+|\widehat{u}_x|^2)dxdt.
\end{align}
It remains to estimate the second term on the right-hand side of the above inequality.

\noindent{\emph{Step 3: Second use of the Carleman estimate.}} We apply the Carleman estimate \eqref{S5:prop:carlemanomega} to the equation satisfied by $\widehat{u}$ to obtain
\begin{multline*}
    \int_{-T}^T\int_0^L e^{-2s\eta(t, x)}(|\widehat{u}|^2+|\widehat{u}_x|^2)dxdt\lesssim s^{-3}\ld^{-4}\left(\int_{-T}^T\int_0^L e^{-2s\eta}|\sigma z_{xt}|^2dxdt\right.\\\left.
    +\iint_{(-T, T)\times \omega_{0}} e^{-2s\eta}(s^5\xi^5|\widehat{u}|^2dxdt+s^3\xi^3|\widehat{u}_x|^2dxdt+s\xi|\widehat{u}_{xx}|^2)dxdt\right).
\end{multline*}
As before, we use \eqref{S5:ineq:Kbound} to bound the first term. Then, we use the definition of the symmetric extension to we rewrite the local terms at the right-hand side of the last inequality as integrals over $(0, T)\times \omega_{0}$, Thus, with $\M_{\omega_{0}}(u)$ as defined in \eqref{S5:eq:Momega}, we get 
\begin{align}\label{S5:ineq:BK3}
    \int_{-T}^T\int_0^L e^{-2s\eta(t, x)}(|\widehat{u}|^2+|\widehat{u}_x|^2)dxdt\lesssim s^{-3}\ld^{-4}\left(\int_0^L e^{-2s\eta(0, x)}|\sigma(x)|^2dx+\M_{\omega_{0}}(u)\right).
\end{align}
Set $c(s, \ld):=1-(s^{-11/2}\ld^{-7}+s^{-5/2}\ld^{-3})$. Putting together inequalities \eqref{S5:ineq:BK1}-\eqref{S5:ineq:BK2}-\eqref{S5:ineq:BK3} and then using that $x\in [0, L]\mapsto e^{-2s\eta(0, x)}\xi^2(0, x)$ is positively bounded above and below, we obtain that
\begin{align}\label{S5:ineq:BK4}
    c(s, \ld)\int_0^L|\sigma(x)|^2dx\lesssim (s^{-5/2}\ld^{-3}\M_{\omega_{0}}(v)+s^{-3}\ld^{-4}\M_{\omega_{0}}(u)),
\end{align}
for any $s\geq s_0$ and $\ld\geq \ld_0$. As $\omega_0\Subset\omega$, we note that $(t, x)\in (0, T)\times \overline{\omega_0} \mapsto s^k\bd{\xi}_k(t, x)e^{-2s\eta(t, x)}$ is bounded by above for $k=1, 3, 5$, from which follows
\begin{align*}
    \M_{\omega_{0}}(u)+\M_{\omega_{0}}(v)\lesssim \norm{y-z}_{H^1(0, T; H^2(\omega))}^2.
\end{align*}
Since $\sigma=\mu-\nu$, the proof ends by choosing $s$ and $\ld$ large enough in \eqref{S5:ineq:BK4} so that $c(s, \ld)>0$.
\end{proof}
\begin{remark}
    We point out that the regularity assumption $(y_0, \vec{h})\in \mc{Z}_{6, T}$ is not sharp. From the proof, we need that $z_{xt}$, $y_{xt}$ belong to $L^\infty(Q)$, which would follow by Sobolev embedding provided they both belong to $L^\infty([0, T], H_{\pw}^s(0, L))$, with $s>1/2$. This could be achieved if we ask $(y_0, \vec{h})\in \mc{Z}_{4+s, T}$ (with an appropriate definition for non-integers). Nevertheless, a rigorous proof will employ Tartar's nonlinear interpolation (see \cite[Section 4]{BSZ03}) and a characterization for the interpolation of spaces involving the transmission conditions as given in \cref{app:prop:interpolation-dom}. Moreover, the spaces $\mc{Z}_{s, T}$ are most likely not sharp in regards to the regularity of the boundary data, see \cref{rk:bdry-reg} above. We do not deepen in this direction as it is outside of the scope of this work.
\end{remark}


\section{Some further remarks}\label{S6}

\subsection{Boundary observability} Under the hypothesis $p_k>p_{k-1}$ with $k\in\ran{0}{N-1}$, a straightforward modification to the proof of \cref{S3:lem:weightObs} lead us to the construction of $\beta$ with \emph{observation at $x=0$}. Given $\ld>0$, we define
\begin{align}\label{S6:eq:weightobs0}
    \eta(t, x)=\dfrac{e^{\kappa\ld\norm{\beta}_{\infty}}-e^{\ld\beta(x)}}{t(T-t)}\ \text{   and   }\   \xi(t, x)=\dfrac{e^{\ld\beta(x)}}{t(T-t)},
\end{align}
for all $(t, x)\in Q$ and some $\kappa\in (1, 2)$. By following the same steps as before, we can obtain a Carleman estimate with boundary observation for the solutions of the system
\begin{align}\label{S6:eq:adj}
\left\{
\begin{array}{rll}
\vp_t+p(x)\vp_{xxx}+\vp_x = 0, & ~(t, x)\in (0, T)\times (0, L),\\
\vp(t, 0)=\vp(t, L)=\vp_{x}(t, 0)=0, & ~t\in (0, T),\\
\vp(T, x) =  \vp_T(x), & ~x\in (0, L),
\end{array}
\right.
\end{align}
coupled by the corresponding transmission conditions \eqref{eq:tc}. The Carleman estimate is the following.

\begin{proposition}\label{S6:prop:carlemanobs0}
Let $\eta$ and $\xi$ be the weight functions defined by \eqref{S6:eq:weightobs0}. Suppose that $p_k>p_{k-1}$ for all $k\in\ran{1}{N-1}$. Then there exist $s_0>0$, $\ld_0>0$ and a constant $C>0$ depending on $L$, $T$, $\rho_0$, $\rho_1$, $\norm{\beta}_{C^3([0, L]\setminus\Gamma)}$, $r$, $s_0$ and $\ld_0$ such that for all $\vp_T\in \D(\A^*)$ we have
\begin{align*}
C\iint_Q e^{-2s\eta}\big(s^5\ld^6\xi^5|\vp|^2+s^3\ld^4\xi^3|\vp_x|^2+s\ld^2\xi|\vp_{xx}|^2\big)dxdt\leq s\ld\int_{0}^T e^{-2s\eta(t, 0)}\xi(t, 0)|\vp_{xx}(t, 0)|^2dt,
\end{align*}
for any $s\geq s_0$ and $\ld\geq \ld_0,$ where $\vp$ is the solution of \eqref{S6:eq:adj} associated to $\vp_T$.
\end{proposition}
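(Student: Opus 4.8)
The plan is to mirror the proof of \cref{thm:carleman_omega}, replacing the interior observation by a boundary one at $x=0$. First I would adapt the construction of \cref{S3:lem:weightObs} to produce a weight $\beta\in C([0,L])$, with $\beta|_{\overline{I_k}}\in C^3(\overline{I_k})$, satisfying $\min_{[0,L]}\beta\geq r>0$, the bound \eqref{S3:eq:weightkbound} and the transmission conditions \eqref{S3:eq:tcweight}, but now \emph{globally monotone}: $\beta_x\leq -r<0$ on each $\overline{I_k}$, $k\in\ran{0}{N-1}$ (in particular $\beta_x(0)<0$). Since there is no interior observation interval, no quartic piece is needed: one takes $\beta$ piecewise affine, starting from a slope $m_0<0$ on $I_0$, propagating the slopes by $\sqrt{p_{k-1}}m_{k-1}=\sqrt{p_k}m_k$ (each $m_k<0$), and fixing the additive constants so that $\beta>0$ and \eqref{S3:eq:weightkbound} hold by taking the first constant large enough; the second-order transmission condition is automatic because $\beta_{xx}\equiv 0$ on each piece.

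With this weight I would run verbatim the conjugation $w=e^{-s\eta}\vp$, the splitting $\mc{L}_\eta=\mc{L}_1+\mc{L}_2+\mc{R}$ and the integration by parts leading to \cref{S3:prop:carleman-conj}. Two features differ from the interior case. Because $\beta_x$ is now bounded away from zero on all of $[0,L]$, the dominating distributed terms control the full weighted norm $\norm{w}_{s,\ld,\xi}^2$ over $Q$, with no interior term to reinsert. At the interface, the monotonicity $p_k>p_{k-1}$ together with $\beta_x<0$ and \eqref{S3:eq:tcweight} yields $[\beta_x]_a>0$; writing $\sqrt{p_-}\beta_x(a^-)=\sqrt{p_+}\beta_x(a^+)=:c<0$ one checks $[p^2\beta_x^5]_a=c^5(p_+^{-1/2}-p_-^{-1/2})>0$, $[p\beta_x^3]_a=c^3(p_+^{-1/2}-p_-^{-1/2})>0$ and $[\beta_x]_a=c(p_+^{-1/2}-p_-^{-1/2})>0$, so the matrix $\bA(a)$ of \eqref{DKDV-CARLEMAN:A-matrix-monotone-case} is again positive definite by Sylvester's criterion (the determinant condition reduces to $1>1/9$). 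Hence $\kB_\Gamma$ is nonnegative, up to absorbing the lower-order interface terms exactly as before.

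The essential new point is the boundary bookkeeping. The conditions $\vp(t,0)=\vp_x(t,0)=0$ transfer to $w(t,0)=w_x(t,0)=0$, while $\vp(t,L)=0$ gives $w(t,L)=0$. Consequently every boundary contribution carrying a factor $w$, or carrying $w_x$ or $|w_x|^2$ at $x=0$, vanishes; the only surviving trace at $x=0$ is $w_{xx}(t,0)$, and the single surviving term there is the piece of $\kB_0$ equal to $\tfrac{3}{2}s\ld\int_0^T p(0)^2\beta_x(0)\xi(t,0)|w_{xx}(t,0)|^2dt$, which is negative since $\beta_x(0)<0$. Moving it to the right-hand side produces the observation $s\ld\int_0^T \xi(t,0)|w_{xx}(t,0)|^2dt$. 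At $x=L$, $\beta_x(L)<0$ makes $\kB_L\geq 0$, and the surviving part of $\kB^*$—only the $x=L$ terms, since the $x=0$ ones contain the now-vanishing $w_x(t,0)$—is absorbed into $\kB_L$ for $s$ large, as in \cref{thm:carleman_omega}.

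Gathering these facts, and using that $\mc{L}\vp=0$ kills the source term $\norm{e^{-s\eta}\mc{L}\vp}^2$, the conjugated estimate becomes $\norm{w}_{s,\ld,\xi}^2\lesssim s\ld\int_0^T\xi(t,0)|w_{xx}(t,0)|^2dt$ after absorbing $\kD^{low}$, the remaining part of $\kB^*$ and $\norm{\mc{R}w}^2$ into the left-hand side for $s\geq s_0$, $\ld\geq \ld_0$. Returning to $\vp=e^{s\eta}w$ and noting $|w_{xx}(t,0)|^2=e^{-2s\eta(t,0)}|\vp_{xx}(t,0)|^2$ (because $\vp=\vp_x=0$ at $x=0$) gives the claimed inequality. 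I expect the only delicate bookkeeping to be confirming that the boundary data at $x=0$ eliminate all traces except $w_{xx}(t,0)$, so that no observation at $x=L$ is required—which is precisely what the Neumann condition $\vp_x(t,0)=0$ provides, while the correct sign of the single surviving term is guaranteed by the global monotonicity $\beta_x<0$ built into the weight.
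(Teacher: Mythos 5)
Your proposal is correct and follows essentially the route the paper itself intends (the paper only sketches this proof in one sentence: modify the weight of \cref{S3:lem:weightObs} to have ``observation at $x=0$'' and repeat the steps of \cref{thm:carleman_omega}): a globally decreasing piecewise-affine $\beta$ satisfying \eqref{S3:eq:tcweight}, the same conjugation and integration by parts, positivity of the interface matrix $\bA(a)$ from $[\beta_x]_a>0$ (which your sign computation under $p_k>p_{k-1}$ with $\beta_x<0$ verifies correctly), and the key observation that $\vp(t,0)=\vp_x(t,0)=0$ forces $w(t,0)=w_x(t,0)=0$, so the only surviving boundary trace with bad sign is $|w_{xx}(t,0)|^2$, which becomes the observation term. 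Your bookkeeping of $\kB_0$, $\kB_L$ and $\kB^*$, and the return to the variable $\vp$ using $w_{xx}(t,0)=e^{-s\eta(t,0)}\vp_{xx}(t,0)$, are exactly what the paper's ``following the same steps as before'' amounts to.
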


As classically done by the HUM method, under the hypothesis that $p_k>p_{k-1}$ for $k\in\ran{1}{N-1}$, the previous Carleman estimate can be combined with a dissipation estimate to obtain, for instance, the boundary null controllability of the linear KdV equation
\begin{align*}
\left\{
\begin{array}{rll}
y_t+p(x)y_{xxx}+y_x= 0, & ~(t, x)\in (0, T)\times (0, L),\\
y(t, 0)=h(t),\ y(t, L)=y_{x}(t, L)=0, & ~t\in (0, T),\\
y(0, x) =  y_0(x), & ~x\in (0, L),
\end{array}
\right.
\end{align*}
coupled by the transmission conditions \eqref{eq:tc}, with control $h\in L^2(0, T)$ and $y_0\in L^2(0, L)$. 

In regards to the boundary control to the trajectories for the constant main coefficient case, Glass and Guerrero \cite{GG08} provided a positive result with one control acting on the left point of the interval. Trying to adapt their ideas to the discontinuous case is an interesting problem, as the proof is more involved, mainly due to the several regularity technicalities related to the boundary value problem and also that it heavily uses interpolation arguments. The latter is a problem in itself in the case of discontinuous coefficients, for which results like \cref{app:prop:interpolation-dom} below may be of interest. We point out that similar issues has been faced in Parada \cite{Par24} when studying the KdV equation on a star-shaped network.

Additionally, for the problem of exact controllability when only one control is acting on the boundary, one may expect to see the critical length phenomena in the discontinuous setting as well. Thus, the problem of exact controllability for every time, for a set of lengths and length of the set of discontinuities is a wide open problem. See \cite[Proposition 4, Remark 2]{Crepeau16} where the control is acting on the Neumann boundary condition.

\subsection{Monotonicity hypothesis on the Carleman estimate} The monotonicity hypothesis on $p$, enforced trough Assumption \ref{assumM}, is crucial to obtain the Carleman estimate with main piecewise constant coefficient. Indeed, we can then construct a weight function which is piecewise monotone and satisfies the same transmission conditions as given by the main PDE under consideration, which further allows us to obtain a weighted norm for the trace terms at the interface.

Regarding applications to inverse problems, it is worth noticing that a similar Carleman estimate with boundary observation as the one used in \cite{BCCM14} can be obtained under the hypothesis $p_{k}>p_{k-1}$, $k\in\ran{1}{N-1}$ (see also \cref{S6:prop:carlemanobs0}). However, this monotonicity condition is not compatible with Hypothesis \ref{assumM}, the latter being necessary to employ the reflection trick and therefore to avoid observations in some time $T_0\in (0, T)$, as commonly found in the parabolic case. Whether one can get rid of this monotonicity hypothesis on the coefficient $p$ is an open problem. 

The main difficulty is to construct a weight function that allows us to estimate the interface terms coming from $I_{12}$ in the Carleman estimate. These terms are not necessarily zero if the weight function does not satisfy the transmission conditions. Observe that a similar difficulty is faced when establishing a Carleman estimate for the KdV equation under Colin-Ghidaglia boundary conditions, see Guilleron \cite{Gui14} and Carreño and Guerrero \cite{CG18}.


\appendix
\setcounter{theorem}{0}
\renewcommand{\thetheorem}{\Alph{section}\arabic{theorem}}

\section{}

\subsection{Inequalities toolbox} Let $I\subset \R$ be a non-empty interval and $T>0$. Let us introduce for $s\geq 0$ the Banach space
\begin{align*}
    \bX_T^s(I)=C([0, T], H^s(I))\cap L^2(0, T; H^{s+1}(I)),
\end{align*}
equipped with the natural norm. We have the following estimate, used in \cref{S2} to obtain the well-posedness of the nonlinear system \eqref{eq:NLKdV}; see \cref{prop:NLwp-Xs}.

\begin{lemma}\cite[Lemma 3.3]{BSZ03}\label{app:lem:L1Hs-estimate}
Let $s\geq 0$ be given. There exists $C>0$ such that for any $T>0$ and $u$, $v\in \bX_T^s(I)$,
\begin{align*}
    \int_0^T \norm{u(t,\cdot) v_x(t, \cdot)}_{H^s(I)}dt\leq C(T^{1/2}+T^{1/3})\norm{u}_{\bX_T^s(I)}\norm{v}_{\bX_T^s(I)}.
\end{align*}
\end{lemma}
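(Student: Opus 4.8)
The plan is to follow Bona, Sun and Zhang \cite[Lemma 3.3]{BSZ03}: I would reduce the space--time estimate to a pointwise-in-time product estimate in $H^s(I)$ and then integrate in time, arranging the Hölder exponents so that the factor $C(T^{1/2}+T^{1/3})$ emerges. The conceptual point is that a \emph{positive} power of $T$ must be extracted from the $C([0,T];L^2(I))$ component of $\norm{\cdot}_{\bX_T^s(I)}$; a naive bound using only the $L^2(0,T;H^{s+1}(I))$ component and Cauchy--Schwarz in $t$ would give a $T$-independent constant, which is exactly what one wants to avoid (the decaying power of $T$ is what makes the contraction in \cref{prop:NLwp-Xs} work).

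First I would work at fixed time $t$. After extending $u(t)$ and $v(t)$ to $\R$ with control by their $H^{s+1}(I)$ norms, I would use that $s+1>1/2$ for every $s\ge 0$, so that $H^{s+1}$ acts as a multiplier algebra on $H^s$; combined with the fractional Leibniz rule this yields the pointwise bound $\norm{u(t)v_x(t)}_{H^s}\lesssim \norm{u(t)}_{H^{s+1}}\norm{v(t)}_{H^{s+1}}$, using $\norm{v_x}_{H^s}\le\norm{v}_{H^{s+1}}$. To free up time-integrability I would instead keep one low-regularity factor: writing $\norm{u v_x}_{H^s}\lesssim\norm{u}_{L^\infty}\norm{v}_{H^{s+1}}+\norm{u}_{H^s}\norm{v_x}_{L^\infty}$ and estimating the $L^\infty$ factors by Gagliardo--Nirenberg on $I$, e.g. $\norm{u}_{L^\infty}\lesssim\norm{u}_{L^2}^{1-a}\norm{u}_{H^{s+1}}^{a}$ with $a=\tfrac{1}{2(s+1)}\in(0,1)$, so that each summand carries exactly one $L^2$-in-space factor and the remaining factors at the top regularity $H^{s+1}$.

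Then I would integrate in time: the $L^2$-in-space factor is bounded by $\norm{\cdot}_{C([0,T];L^2)}\le\norm{\cdot}_{\bX_T^s}$ and pulled out of the integral, while the surviving powers of $\norm{\cdot}_{H^{s+1}}\in L^2(0,T)$ are handled by Hölder in $t$. Since their total temporal integrability lies strictly below the critical value, Hölder on the finite interval $[0,T]$ produces a factor $T^{\gamma}$ with $\gamma>0$; equivalently one can phrase this through the interpolation embedding $\bX_T^s(I)\hookrightarrow L^{q}(0,T;H^{\sigma}(I))$ with $\sigma=(s+1)\tfrac{2}{q}$ and choose $q$ slightly above $2$, the gap to the endpoint furnishing the power of $T$. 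Bookkeeping of these exponents, uniformly in $s\ge 0$, gives the stated $C(T^{1/2}+T^{1/3})$. The main obstacle I expect is twofold: making the multiplier and Gagliardo--Nirenberg inequalities rigorous on the bounded interval $I$ (controlling extension/boundary contributions), and the low-regularity range where $v_x\notin L^\infty$, which forces the redistribution via the time-interpolation embedding rather than a direct $L^\infty$ bound; handling this uniformly is what fixes the precise exponents $1/2$ and $1/3$.
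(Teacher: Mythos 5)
A preliminary remark on the comparison itself: the paper does not prove this lemma at all --- it is quoted verbatim from \cite[Lemma 3.3]{BSZ03} and used as a black box in \cref{prop:NLwp-Xs} --- so your proposal can only be judged on its own merits against the cited argument. Your strategy is the standard and correct one for such bilinear estimates, and your opening observation is exactly the right conceptual point: estimating both factors in $L^2(0,T;H^{s+1}(I))$ and applying Cauchy--Schwarz in time yields a $T$-independent constant, so the positive power of $T$ must be purchased with the $C([0,T];L^2(I))$ component, via Gagliardo--Nirenberg in space followed by H\"older in time. The pointwise product estimate, the exponent $a=\tfrac{1}{2(s+1)}$, and the embedding $\bX_T^s(I)\hookrightarrow L^q(0,T;H^\sigma(I))$ with $\sigma=\tfrac{2(s+1)}{q}$ are all correct.

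The genuine gap is your final claim that ``bookkeeping of these exponents, uniformly in $s\ge 0$, gives the stated $C(T^{1/2}+T^{1/3})$.'' It does not, and no bookkeeping can. Carry out your own computation: pulling $\norm{u}_{C([0,T];L^2)}^{1-a}$ out of the time integral and applying H\"older to $\norm{u(t)}_{H^{s+1}}^{a}\in L^{2/a}(0,T)$ against $\norm{v(t)}_{H^{s+1}}\in L^{2}(0,T)$ leaves the power $T^{\gamma(s)}$ with $\gamma(s)=\tfrac12(1-a)=\tfrac{2s+1}{4(s+1)}$; your $L^q$ route gives the same value in the limit $q\to 4(s+1)$. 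This equals $\tfrac14$ at $s=0$ and stays strictly below $\tfrac13$ for all $s<\tfrac12$, and since $T^{1/4}/T^{1/3}\to\infty$ as $T\to 0$, the inequality you actually obtain does not imply the stated one in that range. Moreover the defect is not reparable: with the space $\bX_T^s(I)$ exactly as defined in the appendix, the stated inequality is \emph{false} for $s=0$. Indeed, take $u=v$ independent of $t$, equal to $\phi_N(x)=N\chi\big(N^2(x-x_0)\big)$ for a fixed interior bump $\chi$, and $T=N^{-4}$; then $\norm{u}_{\bX_T^0(I)}=O(1)$ uniformly in $N$, while
\[
\int_0^T\norm{u\,v_x}_{L^2(I)}\,dt=T\,\norm{\phi_N\phi_N'}_{L^2(I)}\sim N^{-1},
\qquad T^{1/2}+T^{1/3}\sim N^{-4/3},
\]
so the ratio of the two sides blows up like $N^{1/3}$. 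In other words, the exponent $\gamma(s)$ your scheme produces is sharp (for $s\ge\tfrac12$ one has $\gamma(s)\ge\tfrac13$ and the stated bound does follow from your scheme). A careful version of your argument therefore proves the correct estimate with $C(T^{1/2}+T^{1/4})$ for every $s\ge 0$ --- which is all the paper ever uses, since the contraction in \cref{prop:NLwp-Xs} only needs a factor vanishing as $\tau\to 0$ --- but the factor $(T^{1/2}+T^{1/3})$ as quoted is unreachable from the two norms of $\bX_T^s(I)$ alone; either it is inherited from a richer solution space in \cite{BSZ03} (whose well-posedness spaces carry additional trace/smoothing norms) or the exponent should be read as $\tfrac14$. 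A secondary, fixable, inaccuracy: in the Kato--Ponce term $\norm{u}_{H^s}\norm{v_x}_{L^\infty}$ you cannot bound $\norm{u}_{H^s}$ by the temporal-sup part of the norm, since that component is only $L^2$ in space; both factors must be interpolated between $L^2$ and $H^{s+1}$, and for $s<\tfrac12$ (where $v_x\notin L^\infty$) this term should be avoided altogether in favour of the one-sided bound $\norm{uv_x}_{H^s}\lesssim\norm{u}_{H^{\sigma}}\norm{v}_{H^{s+1}}$ with $\sigma>\tfrac12$.
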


The following result allows us to consider $yy_x$ as a source term in \eqref{eq:kdv-lin-f}-\eqref{eq:tc}.

\begin{proposition}\cite[Proposition 4.1]{Ros97}\label{app:prop:yyxH1}
Let $y\in L^2(0, T; H^1(I))$. Then $yy_x \in L^1(0, T; L^2(I))$ and the map 
$$y\in L^2(0, T; H^1(I))\longmapsto yy_x \in L^1(0, T; L^2(I))$$ is continuous and there exists $C>0$ such that
\begin{align*}
    \norm{yy_x-zz_x}_{L^1(0, T; L^2(I))}\leq C\big(\norm{y}_{L^2(0, T; H^1(I))}+\norm{z}_{L^2(0, T; H^1(I))}\big)\norm{y-z}_{L^2(0, T; H^1(I))}.
\end{align*}
\end{proposition}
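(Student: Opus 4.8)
The plan is to reduce the claim to a pointwise-in-time estimate in $L^2(I)$ and then integrate over $(0,T)$, the only analytic input being the one-dimensional Sobolev embedding $H^1(I)\hookrightarrow L^\infty(I)$. First I would rewrite the nonlinearity as a perfect derivative: since $yy_x=\tfrac12\partial_x(y^2)$, the difference becomes
\begin{align*}
    yy_x-zz_x=\tfrac12\partial_x\big((y-z)(y+z)\big)=\tfrac12\big((y-z)_x(y+z)+(y-z)(y+z)_x\big),
\end{align*}
an identity valid in $L^1(I)$ for almost every $t$, since each factor belongs to $H^1(I)$ and $H^1(I)$ is an algebra against $L^\infty(I)$ in one dimension.

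Next, for almost every $t\in(0,T)$ I would bound the $L^2(I)$-norm of this expression by placing the differentiated factor in $L^2(I)$ and the undifferentiated one in $L^\infty(I)$, invoking the embedding $H^1(I)\hookrightarrow L^\infty(I)$:
\begin{align*}
    \norm{yy_x-zz_x}_{L^2(I)}&\leq \tfrac12\big(\norm{(y-z)_x}_{L^2(I)}\norm{y+z}_{L^\infty(I)}+\norm{y-z}_{L^\infty(I)}\norm{(y+z)_x}_{L^2(I)}\big)\\
    &\leq C\,\norm{y-z}_{H^1(I)}\big(\norm{y}_{H^1(I)}+\norm{z}_{H^1(I)}\big).
\end{align*}
Taking $z\equiv 0$ in this bound already yields $\norm{yy_x}_{L^2(I)}\leq C\norm{y}_{H^1(I)}^2$ for a.e. $t$, which is the ingredient establishing that the map lands in $L^1(0,T;L^2(I))$.

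Finally I would integrate in time. Integrating the $z\equiv 0$ bound gives $\int_0^T\norm{yy_x}_{L^2(I)}\,dt\leq C\norm{y}_{L^2(0,T;H^1(I))}^2<\infty$, so $yy_x\in L^1(0,T;L^2(I))$. Integrating the difference bound and applying the Cauchy--Schwarz inequality in $t$ produces
\begin{align*}
    \int_0^T\norm{yy_x-zz_x}_{L^2(I)}\,dt\leq C\,\norm{y-z}_{L^2(0,T;H^1(I))}\big(\norm{y}_{L^2(0,T;H^1(I))}+\norm{z}_{L^2(0,T;H^1(I))}\big),
\end{align*}
which is exactly the asserted Lipschitz estimate and in particular yields continuity of the map.

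I do not expect a genuine obstacle here: the only point requiring a little attention is the Sobolev embedding $H^1(I)\hookrightarrow L^\infty(I)$, which is valid in one dimension on any interval $I$ (with a constant depending on $I$, obtained for instance from the Gagliardo--Nirenberg inequality $\norm{u}_{L^\infty(I)}\leq C\norm{u}_{L^2(I)}^{1/2}\norm{u}_{H^1(I)}^{1/2}$ on unbounded $I$, and from the fundamental theorem of calculus on bounded $I$). Everything else is a routine product estimate combined with the Cauchy--Schwarz inequality, so the argument is elementary.
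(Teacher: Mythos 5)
Your proof is correct, and since the paper itself does not prove this proposition (it is quoted from Rosier \cite{Ros97}), the right comparison is with that cited proof: your argument is precisely the standard one behind it --- product rule, the one-dimensional embedding $H^1(I)\hookrightarrow L^\infty(I)$, a pointwise-in-time bilinear estimate, and Cauchy--Schwarz in time --- with your symmetric factorization $yy_x-zz_x=\tfrac12\partial_x\big((y-z)(y+z)\big)$ being only a cosmetic variant of the usual splitting $(y-z)y_x+z(y-z)_x$. One point worth flagging: the first sentence of the statement, ``$yy_x\in L^1(0,T;H^1(I))$'', is a typo for $L^1(0,T;L^2(I))$ (membership in $H^1$ would require control of $y_x^2+yy_{xx}$ and is false at this regularity), and the $L^2(I)$ version is exactly what you prove, consistently with the displayed estimate and the stated mapping property.
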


\subsection{Interpolation with constraints lemma} The aim of this section is to describe the interpolation of piecewise Sobolev spaces including the transmission conditions \eqref{eq:tc-dom}. To this end, we will use Löfstrom's results \cite{Lof95, Lof97} on interpolation with constraints in a sort of black-box way. In what follows we will work with the $K$-interpolation method, for which we set $(\cdot, \cdot)_{\theta, 2}=(\cdot, \cdot)_{\theta}$, where we drop the subscript $2$ for simplicity, as we will be working only with Hilbert spaces; we refer to \cite[Chapter 1, Section 1.3]{Triebel95} or \cite[Chapter 1]{Lof97} for further details on the topic. 

Let us set $T:=(T_0, T_1, T_2)$ with $T_j: H_{\pw}^3(0, L)\to \R^{N-1}$ defined as $T_j:=(\Lambda_j^1,\ldots, \Lambda_j^{N-1})$ with
\begin{align*}
    \Lambda_j^k(u)=\al_{j,k}u^{(j)}(a_k^+)-\alpha_{j,k-1}u^{(j)}(a_k^-),
\end{align*}
where $\alpha_{j, k}:=p_k^{j/2}$, $j\in\ran{0}{2}$. These functionals are well-defined by Sobolev embedding and encode the $N-1$ transmission conditions at each order, since we can write
\begin{align}\label{def:H3kerT}
    \H_{\tc}^3(0, L)=H_{\pw}^3(0, L)\cap\ker T.
\end{align}
The interpolation result is the following.

\begin{proposition}\label{app:prop:interpolation-dom}
    For any $\theta\in (0, 1)$ with $\theta\not\in\{\tfrac{1}{6}, \tfrac{1}{2}, \tfrac{5}{6}\}$, the interpolation space $(L^2(0, L), \H_{tc}^3(0, L))_{\theta}$ consists of those $u\in H_{\pw}^{3\theta}(0, L)$ such that
    \begin{align*}
            T_j(u)=0     & \text{ for every $j\in\{0, 1, 2\}$ for which } 3\theta>j+1/2.
    \end{align*}
\end{proposition}
\begin{remark}\label{rk:interpolationBC}
    The analogous interpolation result holds if homogeneous Dirichlet and/or Neumann boundary conditions are added. For the sake of simplicity, we choose to only consider the constraints given by the transmission conditions for the analysis.
\end{remark}

We first introduce some necessary notation for the proof. Recall that for $s\geq 0$, we identify $H_{\pw}^s(0, L)=\prod_{k=0}^{N-1}H^s(I_k)$. For each $k\in\ran{0}{N-1}$, let $E_k: H^s(I_k)\to H^s(\R)$ denote the corresponding Stein extension operator (see for instance \cite[Theorem 5.24]{AF03}) and set $\mathbf{E}:=(E_0,\ldots, E_{N-1})$ which maps $H_{\pw}^s(0, L)$ into $\prod_{k=0}^{N-1}H^s(\R)\simeq H^s(\R; \R^N)$. Let us also consider the restriction map $R_k: H^s(\R)\mapsto H^s(I_k)$, and define $\mathbf{R}=(R_0,\ldots, R_{N-1})$. In particular, $\mathbf{R}\mathbf{E}=\mathrm{Id}$ on $L^2(0, L)$ and $H_{\pw}^3(0, L)$.

Let us introduce the functionals $\widetilde{\Lambda}_j^k: H^3(\R; \R^N)\to \R$ defined as
\begin{align*}
    \widetilde{\Lambda}_j^k(v)&:=\Lambda_{j}^k\mathbf{R}(v)\\
    &=\al_{j,k}\Gamma_j^{(a_k)}(\pi_{k}v)-\alpha_{j,k-1}\Gamma_j^{(a_k)}(\pi_{k-1}v)=\al_{j,k}v_{k}^{(j)}(a_k)-\alpha_{j,k-1}v_{k-1}^{(j)}(a_k),
\end{align*}
where $\pi_k$ denotes the projection into the $k$-th coordinate and $\Gamma_j^{(a_k)}(w):=w^{(j)}(a_k)$. These functionals are indeed well-defined and continuous by one dimensional Sobolev embedding.

\begin{proof}[Proof of \cref{app:prop:interpolation-dom}]
    Let us introduce $\widetilde{T}:=T\circ \mathbf{R}$. From the identity $\mathbf{R}\mathbf{E}=\mathrm{Id}_{H_{\pw}^3(0, L)}$, if $u\in\ker T$ then $\widetilde{T}(\mathbf{E}u)=T(\mathbf{R}\mathbf{E}u)=T(u)=\mathbf{0}_{\R^{N-1}}$, which yields $\mathbf{E}(\ker T)\subset \ker \widetilde{T}$. Similarly, if $v\in\ker\widetilde{T}$, then $T(\mathbf{R}v)=\widetilde{T}(v)=\mathbf{0}_{\R^{N-1}}$, hence $\mathbf{R}(\ker\widetilde{T})\subset\ker T$. By kernel compatibility, the maps
    \begin{align*}
    \begin{array}{rlcl}
        \mathbf{E}: & H_{\pw}^3(0, L)\cap\ker T&\longrightarrow& H^3(\R, \R^N)\cap\ker\widetilde{T},\\ 
        \mathbf{R}: & H^3(\R, \R^N)\cap\ker\widetilde{T}&\longrightarrow & H_{\pw}^3(0, L)\cap\ker T,
    \end{array}
    \end{align*}
    are both linear and bounded between the endpoint constrained spaces. Since $\mathbf{E}$ and $\mathbf{R}$ are also linear and bounded in $L^2(0, L)$ and $L^2(\R; \R^N)$, respectively, by retraction-coretraction's theorem \cite[Section 1.2.4]{Triebel95} applied to the constrained interpolation couples, we can write
    \begin{align}\label{eq:retraction-identity}
        (L^2(0, L), H_{\pw}^3(0, L)\cap \ker T)_{\theta}=\mathbf{R}\big((L^2(\R; \R^N), H^3(\R; \R^N)\cap \ker\widetilde{T})_{\theta}\big).
    \end{align}

    To describe the retracted space above, we will closely follow the construction in \cite[Section 2.8]{Lof97}. For $j\in\ran{0}{2}$ and $k\in\ran{1}{N-1}$, let us consider $\vartheta_{j, k}(t):=G_t*\overline{\vp_{j, k}(t, \cdot)}/\norm{\vp_{j, k}(t, \cdot)}_{L^2(\R)}^2$ with $\vp_{j, k}(t, s):=(D^{j}G_t)(a_k-s)$ and $\widehat{G_t}(\xi)=(1+t^2|\xi|^6)^{-1}$. It satisfies $\Gamma_j^{(a_k)}(\vartheta_{j, k}(t))=1$ and
    \begin{align*}
        \max\big(\norm{\vartheta_{j,k}(t)}_{L^2(\R)}, t\norm{\vartheta_{j,k}(t)}_{H^3(\R)}\big)\lesssim t^{\theta_j},
    \end{align*}
    with breakpoint $\theta_j:=\frac{j+1/2}{3}$, see \cite[Lemma 2.4]{Lof97}. For each fixed $k$, by \cite[Lemma 2.5]{Lof97} we can then construct a supporting sequence $(\psi_{j, k})_j$ to the evaluation functionals $\big(\Gamma_j^{(a_k)}\big)_j$ with breakpoints $(\theta_j)_j$. Let $e_{k}$ be the $k$th element of the canonical basis in $\R^N$ and let us take $\chi_k$ to be a smooth cutoff function equals to $1$ near $a_k$ with disjoint support for different $k$ with $k\in\ran{1}{N-1}$. In a similar spirit to \cite[Lemma 6]{Lof95}, for $j\in\ran{0}{2}$ and $k\in\ran{1}{N-1}$, let us define
    \begin{align*}
        w_{j, k}(t, \cdot):=\frac{1}{\al_{j, k}}\chi_k(\cdot)\psi_{j, k}(t, \cdot)e_{k}.
    \end{align*}
    Since $(\partial_x^\ell\psi_{j, k})(\cdot, a_k)=\delta_{j\ell}$ for $\ell\in\ran{0}{2}$, by support localization around $a_k$, this construction yields
    \begin{align*}
            \widetilde{\Lambda}_{j'}^{k'}(w_{j, k}(t))=\delta_{k, k'}\delta_{j, j'},
    \end{align*}
    and that the breaking point of $\widetilde{\Lambda}_{j}^k$ is $\theta_j$ for every $k\in\ran{1}{N-1}$, with $j\in\ran{0}{2}$. Moreover, it holds
    \begin{align*}
        \max\big(\norm{w_{j,k}(t)}_{L^2(\R; \R^N)}, t\norm{w_{j,k}(t)}_{H^3(\R; \R^N)}\big)\lesssim t^{\theta_j}.
    \end{align*}
    This yields a supporting sequence $(w_{j, k})$ for the family $(\widetilde{\Lambda}_j^k)$ in the sense of \cite[Definition 2.3]{Lof97}. In particular, \cite[Lemma 2.2]{Lof97} implies that $(\widetilde{\Lambda}_j^k)$ is a strongly independent family of functionals. Now, let us put $\widetilde{T}_\theta:=(\widetilde{T}_{0, \theta}, \widetilde{T}_{1, \theta}, \widetilde{T}_{2, \theta})$ where $\widetilde{T}_{j, \theta}=\widetilde{T}_j$ if $\theta>\theta_j$ and $\widetilde{T}_{j, \theta}=\mathbf{0}_{\R^{N-1}}$ otherwise, and define $T_\theta:=(T_{0, \theta}, T_{1, \theta}, T_{2, \theta})$ similarly using $T_j$ instead of $\widetilde{T}_j$. Since interpolation works component-wise (by retraction-coretraction), by classical interpolation of Sobolev spaces we get
    \begin{align}\label{eq:interpolation-product}
        (L^2(0, L), H_{\pw}^3(0, L))_\theta=\mathbf{R}((L^2(\R; \R^N), H^3(\R; \R^N))_{\theta})=\prod_{k=0}^{N-1}(L^2(I_k), H^3(I_k))_{\theta}=H_{\pw}^{3\theta}(0, L).
    \end{align}  
    We apply \cite[Theorem 2.4, Corollary 2.3]{Lof97} to $(\widetilde{\Lambda}_j^k)_{j, k}$ on the couple $(L^2(\R; \R^N), H^3(\R; \R^N))$, yielding
    \begin{align}\label{eq:interpolation-bigspace}
        \big(L^2(\R; \R^N), H^3(\R; \R^N)\cap \ker\widetilde{T}\big)_{\theta}=(L^2(\R; \R^N), H^3(\R; \R^N))_{\theta}\cap\ker\widetilde{T}_\theta.
    \end{align}

    To conclude, since $\mathbf{R}\mathbf{E}=\mathrm{Id}$ also holds in $H_{\pw}^{3\theta}(0, L)$ by linear interpolation, as before, away from the breakpoints $\theta_j$, we verify the kernel compatibility
    \begin{align}\label{kernelcomptheta}
        \mathbf{E}(\ker T_\theta)\subset \ker \widetilde{T}_\theta\ \text{ and }\ \mathbf{R}(\ker\widetilde{T}_\theta)\subset\ker T_\theta,
    \end{align}
    where we used that $\widetilde{T}_\theta=T_\theta\circ 
    \mathbf{R}$. On one hand, using \eqref{eq:retraction-identity} followed by \eqref{eq:interpolation-bigspace} and \eqref{kernelcomptheta}, we have
    \begin{align*}
        (L^2(0, L), H_{\pw}^3(0, L)\cap \ker T)_{\theta}&=\mathbf{R}\big((L^2(\R; \R^N), H^3(\R; \R^N)\cap \ker\widetilde{T})_{\theta}\big)\\
        &\subset\mathbf{R}\big((L^2(\R; \R^N), H^3(\R; \R^N))_{\theta}\big)\cap\mathbf{R}\big(\ker\widetilde{T}_\theta\big)\\
        &=(L^2(0, L), H_{\pw}^3(0, L))_\theta\cap\ker T_\theta.
    \end{align*}
    On the other hand, if $u\in (L^2(0, L), H_{\pw}^3(0, L))_\theta\cap\ker T_\theta$, by \eqref{kernelcomptheta} we have $v:=\mathbf{E}u$ belongs to the constrained space $H^{3\theta}(\R; \R^N)\cap\ker \widetilde{T}_\theta$. Using \eqref{eq:interpolation-bigspace}, we have $v\in (L^2(\R; \R^N), H^3(\R;\R^N)\cap\ker\widetilde{T})_\theta$, and therefore, by \eqref{eq:retraction-identity},
    \begin{align*}
        u=\mathbf{R}v\in \mathbf{R}((L^2(\R; \R^N), H^3(\R; \R^N)\cap\ker\widetilde{T})_{\theta})=(L^2(0, L), \H_{\tc}^3(0, L))_\theta.
    \end{align*}
    Recalling definition \eqref{def:H3kerT} and characterization \eqref{eq:interpolation-product}, this ends the proof.
\end{proof}

We now derive auxiliary interpolation identities when the right endpoint space is $\D(\A^*)$ as defined in \cref{S2:wp-linear}. Let us define the endpoint couple $E_0:=L^2(0, L)$ and $E_3=\D(\A^*)$, the latter endowed with the graph norm (equivalent to the $\norm{\cdot}_{H_{\pw}^3(0, L)}$-norm). For $\sigma\in (0, 3)$ away from $\big\{\tfrac{1}{2}, \tfrac{3}{2}, \tfrac{5}{2}\big\}$, let us set $E_{\sigma}:=(E_0, E_3)_{\sigma/3}$, which is well-defined by \cref{app:prop:interpolation-dom}-\cref{rk:interpolationBC} (from now on, only the former will be referenced). Using duality for real interpolation of Hilbert spaces, we have $E_{-\sigma}=E_{\sigma}'$, with duality taken with respect to $L^2$-pivot. If we set $E_{-3}:=E_3'$, we have a Hilbert triple
\begin{align*}
    E_3\hookrightarrow E_0\hookrightarrow E_{-3}.
\end{align*}
From now on, all equalities below are understood up to equivalence of norms. Since complex and real interpolation coincide (up to equivalent norms) in the Hilbert setting, by \cite[Proposition 2.1]{LM72} we have
\begin{align}\label{eq:inter-space}
    (E_{-3}, E_{3})_{1/2}=E_0.
\end{align}
By \cref{app:prop:interpolation-dom}, $E_1=(E_0, E_3)_{1/3}=H_0^1(0, L)$, hence $E_{-1}=H^{-1}(0, L)$. Next we express both $E_{-1}$ and $E_2$ as interpolation spaces associated with the couple $(E_{-3}, E_{3})$,
\begin{align*}
    E_{-1}=(E_{-3}, E_3)_{1/3}\ \text{ and }\
    E_{2}=(E_{-3}, E_3)_{5/6}.
\end{align*}
Indeed, using duality, symmetry and then reiteration for the real method \cite[Section 1.11.2, Section 1.3.3, Section 1.10.2]{Triebel95} and \eqref{eq:inter-space},
\begin{align*}
    E_{-1}=E_1'=(E_0, E_3)_{1/3}'=(E_0, E_{-3})_{1/3}=(E_{-3}, E_0)_{2/3}=(E_{-3}, (E_{-3}, E_{3})_{1/2})_{2/3}=(E_{-3}, E_3)_{1/3}.
\end{align*}
Similarly,
\begin{align*}
    E_2=(E_0, E_3)_{2/3}=((E_{-3}, E_3)_{1/2}, E_3)_{2/3}=(E_{-3}, E_3)_{5/6}.
\end{align*}
Then, by reiteration,
\begin{align*}
    (E_{-1}, E_2)_{\eta}=((E_{-3}, E_3)_{1/3}, (E_{-3}, E_3)_{5/6})_{\eta}=(E_{-3}, E_3)_{\theta(\eta)},
\end{align*}
where $\theta(\eta)=(1-\eta)\frac{1}{3}+\eta\frac{5}{6}$. Therefore, if we pick $\eta=1/3$ and $\eta=2/3$ we get
\begin{align*}
    (E_{-1}, E_2)_{1/3}=E_0=L^2(0, L)\ \text{ and }\ (E_{-1}, E_2)_{2/3}=E_1=H_0^1(0, L).
\end{align*}
Set $\H_{\tc,\mathrm{DDN}}^{2}(0,L):=E_2$, which corresponds to those $u\in H_{\pw}^2(0, L)$ such that $T_0u=T_1u=\mathbf{0}_{\R^{N-1}}$ and $u(0)=u(L)=u'(0)=0$, as precised in \cref{app:prop:interpolation-dom}. We thus have the following lemma.

\begin{lemma}\label{app:lem:interpolation-source}
    It holds
    \begin{align*}
        (H^{-1}(0, L), \H_{tc,\mathrm{DDN}}^2(0, L))_{1/3}=L^2(0, L)\ \text{ and }\ (H^{-1}(0, L), \H_{tc,\mathrm{DDN}}^2(0, L))_{2/3}=H_0^1(0, L).
    \end{align*}
    \begin{align*}
        (H^{-1}(0, L), \H_{tc,\mathrm{DDN}}^2(0, L))_{1/3}&=L^2(0, L),\\ (H^{-1}(0, L), \H_{tc,\mathrm{DDN}}^2(0, L))_{2/3}&=H_0^1(0, L).
    \end{align*}
\end{lemma}

\bibliographystyle{alpha}
\bibliography{bibliography}

\end{document}